\newtheorem{theorem}[equation]{Theorem}
\newtheorem{lemma}[equation]{Lemma}
\newtheorem{proposition}[equation]{Proposition}
\newtheorem{corollary}[equation]{Corollary}
\newtheorem{definition}[equation]{Definition}
\theoremstyle{definition}
\newtheorem{example}[equation]{Example}
\numberwithin{equation}{subsection}
\def\mc{\mathcal}
\def\mb{\mathbb}
\def\Spec{\textnormal{Spec }}
\def\Hom{\textnormal{Hom}}
\def\sheafHom{\mathcal{H}om}
\def\Ob{\textbf{Ob }}
\def\Aut{\textnormal{Aut}}
\def\mod{\textbf{Mod}}
\def\lmod{\textbf{LMod}}
\begin{document}

\title{Equicharacteristic \'etale cohomology in dimension
one}

\author{Carl A.~Miller}
\email{carlmi@umich.edu}
\address{Department of Mathematics \\ 530 Church St. \\ University of Michigan \\
Ann Arbor, MI 48109}
\urladdr{http://www.umich.edu/~carlmi/}

\keywords{characteristic-$p$ curves,
Grothendieck-Ogg-Shafarevich formula, \'etale sheaves, Riemann-Hilbert
correspondence,
Frobenius endomorphism, minimal roots}
\subjclass[2000]{Primary: 14F20; Secondary: 13A35, 14F30.}

\thanks{This paper was written while the author
was supported by NSF grant 
DMS-0502170.}

\begin{abstract}
The Grothendieck-Ogg-Shafarevich formula expresses the Euler
characteristic of an \'etale sheaf on a curve in terms of
local data.  The purpose of this paper is to prove
a version of the  G-O-S formula which applies to
equicharacteristic sheaves (a bound, rather than an equality).
This follows
a proposal of R.~Pink.

The basis for the result is the characteristic-$p$
``Riemann-Hilbert'' correspondence, which relates
equicharacteristic \'etale sheaves to $\mc{O}_{F, X}$-modules.
In the paper we prove a version of 
this correspondence for curves,
considering both local and global settings.
In the process we define an invariant,
the ``minimal root index,'' which measures
the local complexity of an $\mc{O}_{F, X}$-module.
This invariant provides the local terms for the main result.
\end{abstract}

\maketitle

\section{Introduction}

\subsection{Overview of paper}

We are concerned with computing sizes of \'etale cohomology
groups in positive characteristic.  Fix a base field, $k$, which
is algebraically closed and has characteristic $p > 0$.

In general, the properties of \'etale cohomology groups over $k$
depend heavily on what coefficient ring one chooses
to use.  One can assume that the coefficient ring is
$p$-torsion (or $p^r$-torsion), or one can assume that the coefficient ring 
has a characteristic which is coprime to $p$.
It is typical to separate these two cases.  The first
case (the ``equicharacteristic'' case) seems to be less
tractable than the second.  But a number of motivations
for studying the second case do carry over to the first.
For example, it is known that zeta functions modulo $p$
can be computed from equicharacteristic \'etale cohomology
groups.  (See \cite{katzsga}.)

A good tool for computing sizes of \'etale cohomology groups
is the ``Grothen\-dieck-Ogg-Shafarevich formula.''  Let $Y$
be a smooth projective $k$-curve, and let $N$ be a constructible
\'etale sheaf of $\mathbb{F}_\ell$-modules on $Y$, where $\ell$
denotes a prime different from $p$.  Assume that the sheaf $N$
is locally constant on an open subset $X \subseteq Y$, and that
its stalks are zero at points outside of $X$.  Then,
\begin{eqnarray}
\chi ( Y , N ) = (2 - 2g) n - \sum_{y \in \left| Y \smallsetminus
X \right|} \left( n + Sw_y ( N ) \right).
\end{eqnarray}
In this formula, $\chi ( Y, N )$ denotes the Euler characteristic 
of $N$ (the alternating sum of the dimensions $h^i ( Y, N)$), $g$
denotes the genus of $Y$, and $n$ denotes the generic rank
of $N$.  The expression $Sw_y ( N )$ denotes the ``Swan conductor,''
an invariant which measures the local ramification of the sheaf $N$.
(See \cite{raynaud} for a discussion of this formula, including
an application to surfaces.)

It is natural to ask whether a similar formula could be
constructed for the equicharacteristic case.  This question
leads to some difficulties, however, which are tied up
with the unpredictable behavior of $H^1 ( Y, N )$.  It is possible 
to construct an example of two $\mathbb{F}_p$-sheaves $N$
and $N'$ on the same curve $Y$, both having the same rank
and local ramification, but which nonetheless have different
Euler characteristics.
(See Example~\ref{ellipticcurveexample}
in this paper.)
So clearly, an exact formula for $\chi ( Y, N)$ based on
local information about $N$ will not be possible.\footnote{An
exact formula may be possible if we allow global data for the
sheaf $N$.
The work of W.~A.~Hawkins in \cite{hawkins1}
and \cite{hawkins2} contains results in that direction.}

A good compromise is to construct a {\it lower bound} for
$\chi ( Y, N)$ in the equicharacteristic case.  This
idea was proposed by R.~Pink.  Pink himself
proved a lower bound for $\chi ( Y, N )$ which applies 
under some restrictions on the wild ramification of $N$
(Theorem~0.2 in \cite{eulerpink}).
The purpose of this paper is to prove the following
general extension of Pink's theorem.
\begin{theorem}
\label{mainthmquoted}
Let $Y \to \Spec k$ be a smooth projective $k$-curve
of genus $g$.
Let $M$ be a rank-$n$ constructible \'etale sheaf of $\mathbb{F}_p$-modules
on $Y$ in which all sections have open support.  Then,
\begin{eqnarray}
\label{mainthmquotedformula}
\chi ( Y, M ) \geq (1 - g)n - \sum_{y \in Y(k) }
\mathfrak{C} \left( M_{(y)} \right).
\end{eqnarray}
\end{theorem}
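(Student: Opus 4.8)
The plan is to carry the whole problem across the characteristic-$p$ Riemann--Hilbert correspondence, compute the \'etale Euler characteristic on the $\mc{O}_{F,Y}$-module side as a coherent Euler characteristic plus a nonnegative error, and then bound the remaining coherent term from below point by point, the local bounds being exactly what the minimal root index was built to supply.

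\emph{Step 1 (cross to $\mc{O}_{F,Y}$-modules).} Let $\mc{M}$ be the $\mc{O}_{F,Y}$-module attached to $M$ by the global Riemann--Hilbert correspondence for curves proved earlier in the paper. That correspondence computes the \'etale cohomology of $M$ as the Zariski hypercohomology of the two-term complex $[\mc{M}\xrightarrow{\,1-F\,}\mc{M}]$ in degrees $0$ and $1$; since $Y$ is a curve these are the only cohomological degrees present, so the hypercohomology long exact sequence writes $H^i(Y,M)$ through the kernels and cokernels of the additive (not $\mc{O}_Y$-linear) operator $1-F$ on $H^0(Y,\mc{M})$ and $H^1(Y,\mc{M})$. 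Because $k$ is algebraically closed those kernels and cokernels are finite-dimensional: on the bijective summand of the Fitting decomposition of Frobenius the Artin--Schreier/Lang argument makes $1-F$ surjective with kernel an $\mathbb{F}_p$-vector space of the same $k$-dimension, while on the nilpotent summand $1-F$ is bijective. Bookkeeping then reduces $\chi(Y,M)$ to the unit-root (bijective) parts of Frobenius on $H^0(Y,\mc{M})$ and $H^1(Y,\mc{M})$. Finally, the hypothesis that every section of $M$ has open support translates, via the local correspondence, into $\mc{M}$ being torsion-free over $\mc{O}_Y$: no nonzero $\mc{O}_{F,Y}$-submodule is supported at a closed point, and in particular $\mc{M}$ has no nonzero nilpotent $\mc{O}_{F,Y}$-submodule; so on the smooth curve $Y$ the sheaf $\mc{M}$ is locally free of rank $n$ whenever it is $\mc{O}_Y$-coherent.

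\emph{Step 2 (reduce to a coherent Euler characteristic).} A global section of $\mc{M}$ killed by a power of Frobenius would generate a nonzero nilpotent $\mc{O}_{F,Y}$-submodule, so no such section exists; hence in the coherent (tamely ramified) case $F$ is injective, therefore bijective, on $H^0(Y,\mc{M})$, so $H^0(Y,\mc{M})^{\mathrm{ur}}=H^0(Y,\mc{M})$ and $\chi(Y,M)=\dim_k H^0(Y,\mc{M})-\dim_k H^1(Y,\mc{M})^{\mathrm{ur}}\ \ge\ \chi_{\mathrm{coh}}(Y,\mc{M})$, the surplus being the $k$-dimension of the nilpotent part of Frobenius on $H^1$. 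When $\mc{M}$ is not $\mc{O}_Y$-coherent (the wildly ramified case) I would fix instead a coherent, locally free, rank-$n$ model $\mc{M}_0\subseteq\mc{M}$ -- obtained from the minimal root of $\mc{M}$ by adjoining finitely many Frobenius translates until the generic rank is $n$ and then saturating -- and prove the analogous inequality $\chi(Y,M)\ge\chi_{\mathrm{coh}}(Y,\mc{M}_0)$, the extra positive contributions now coming from $\mc{M}/\mc{M}_0$ (supported on the ramification locus) as well as from the $H^1$ nilpotent part. Either way Riemann--Roch gives $\chi_{\mathrm{coh}}(Y,\mc{M}_0)=\deg\mc{M}_0+n(1-g)$, so the theorem is reduced to $\deg\mc{M}_0\ge -\sum_{y\in Y(k)}\mathfrak{C}(M_{(y)})$.

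\emph{Step 3 (localize the degree and invoke the minimal root index).} Now I would write $-\deg\mc{M}_0$ as a sum of nonnegative local terms, one for each $y\in Y(k)$ and equal to zero wherever $M$ is lisse -- for instance the order of vanishing at $y$ of the determinant of the Frobenius structure of $\mc{M}_0$, measured against a local trivialization -- and then bound each local term above by $\mathfrak{C}(M_{(y)})$, using the local Riemann--Hilbert correspondence to identify the local $\mc{O}_{F,Y}$-module of $M$ at $y$ with $\mc{M}_{(y)}$. Since the minimal root index was introduced precisely to record this local complexity, this step should come down to unwinding its definition. I expect the main obstacle to be the coordination of Steps 2 and 3 in the wildly ramified case: proving $\chi(Y,M)\ge\chi_{\mathrm{coh}}(Y,\mc{M}_0)$ when $\mc{M}$ is genuinely non-coherent and $\mc{M}/\mc{M}_0$ has infinite length, while keeping the local degree bookkeeping tight enough that nothing is lost against $\mathfrak{C}(M_{(y)})$; the finiteness (Fredholm) assertion of Step 1 for non-coherent $\mc{M}$, which I would reduce to the coherent case by a direct-limit argument, is a subsidiary technical point.
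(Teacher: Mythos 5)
There is a genuine gap in Steps~2 and~3: the coherent sheaf whose Euler characteristic controls $\chi(Y,M)$ is the $\mc{O}_Y$-linear \emph{dual} $\mc{M}_0^\vee = \sheafHom_{\mc{O}_Y}(\mc{M}_0,\mc{O}_Y)$ of the minimal root, not a coherent submodule $\mc{M}_0\subseteq\mc{M}$. Recall that $\mc{M}=\sheafHom_{\mb{F}_p}(M,\mc{O}_Y)$ is already a contravariant dual of $M$; the way back to the \'etale side is the Artin--Schreier-type exact sequence $0\to M\to\mc{M}_0^\vee\xrightarrow{1-F}\mc{M}_0^\vee\to 0$ supplied by Proposition~\ref{invariantsandroots} and Theorems~\ref{localRHalt} and~\ref{globalexactsequence}, where the Frobenius structure on $\mc{M}_0^\vee$ is the one built from the root inclusion $\mc{M}_0\hookrightarrow F_Y^{r*}\mc{M}_0$. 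Comparing cohomology along $N\hookrightarrow\mc{N}_0^\vee$ gives $\chi(Y,M)\ge\chi(Y,\mc{M}_0^\vee)=n(1-g)-\deg\mc{M}_0$, and $\deg\mc{M}_0=\sum_y\mathfrak{C}(M_{(y)})$ since $\mc{M}_1/\mc{M}_0$ has stalk length $(p^r-1)\mathfrak{C}(M_{(y)})$ at $y$ while $\deg\mc{M}_1=p^r\deg\mc{M}_0$. Your inequality $\chi(Y,M)\ge\chi_{\mathrm{coh}}(Y,\mc{M}_0)$ with $\mc{M}_0$ a submodule of $\mc{M}$ points the duality the wrong way: with $\mc{M}_0$ the minimal root one has $\deg\mc{M}_0=\sum_y\mathfrak{C}\ge 0$, so your Step~2 would assert $\chi(Y,M)\ge n(1-g)+\sum_y\mathfrak{C}$, which is false. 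Example~\ref{ellipticcurveexample} already refutes it: there $n(1-g)+\sum\mathfrak{C}=4$, while $\chi(\mathbb{P}^1,P)\in\{0,1\}$. Relatedly, in Step~3 you expect $-\deg\mc{M}_0$ to be a sum of nonnegative local terms, but as just noted $\deg\mc{M}_0\ge0$, so that expectation also has the sign reversed.

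Two further points. First, your Step~1 complex $[\mc{M}\xrightarrow{1-F}\mc{M}]$ is not what the paper uses and is awkward to interpret, since $\mc{M}$ is noncoherent in general and the Frobenius on $\mc{M}$ acts contravariantly relative to $M$; the paper avoids this entirely by passing to the coherent dual $\mc{M}_0^\vee$, where the Frobenius is an injective additive endomorphism and the usual semisimple-plus-nilpotent decomposition of $H^1$ then yields exactly the inequality $\dim H^1(Y,N)\le\dim_k H^1(Y,\mc{N}_0^\vee)$ via Proposition~\ref{cohomologybijection}. Second, a root of $\mc{M}$ already has generic rank $n$ (Proposition~\ref{densecoherent} shows $\mc{M}_0$ and $\mc{M}$ agree on a dense open), so the auxiliary construction in your Step~2 of ``adjoining Frobenius translates until the generic rank is $n$ and then saturating'' is unnecessary and suggests a misunderstanding of what a root is. Once you replace $\mc{M}_0$ by $\mc{M}_0^\vee$ throughout and substitute the root's Artin--Schreier sequence for the hypercohomology heuristic, your outline aligns with the paper's argument.
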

In the expression above, $\mathfrak{C} \left( M_{(y)} \right)$
denotes a local invariant which we call the ``minimal root index.''

The proof of Theorem~\ref{mainthmquoted} is based on a study
of the relationship between \'etale $\mathbb{F}_p$-sheaves and
quasi-coherent $\mc{O}_Y$-modules.  Suppose that $M$ is the sheaf
defined in the theorem.  Let 
\begin{eqnarray}
\mathcal{M} = \sheafHom_{\mathbb{F}_p} \left( M , \mc{O}_{Y_\textnormal{\'et}} \right).
\end{eqnarray}
Then $\mathcal{M}$ has the structure of an $\mc{O}_Y$-module.  Additionally,
the $p$th-power map on $\mc{O}_{Y_\textnormal{\'et}}$ determines
a Frobenius-linear endomorphism $F \colon \mc{M} \to \mc{M}$.  
From the data of $\mc{M}$ together with this endomorphism, one
can recover the original sheaf $M$.  This association is part
of the characteristic-$p$ ``Riemann-Hilbert'' correspondence
of M.~Emerton and M.~Kisin (\cite{emertonandkisin}).  Following their terminology, we
call $\mc{M}$ an ``$\mc{O}_{F, Y}$-module.''

The characteristic-$p$ Riemann-Hilbert correspondence is
developed in full generality in \cite{emertonandkisin}.  Since
we prefer to avoid the language of derived categories, we will
not make direct use of the results from that paper.  We 
construct a miniature version of the correspondence
which applies to $\mathbb{F}_p$-sheaves on $k$-curves.  
Our version includes a localization functor.  The key
results are Theorem~\ref{localRH}
and Propositions~\ref{stalkiso1}, \ref{stalkiso2},
\ref{globaldoubledual1}, and \ref{globaldoubledual2}.

A key ingredient for the construction of Theorem~\ref{mainthmquoted}--the ingredient,
in fact, that allows the extension beyond
Pink's original result--is the notion of a ``root'' of
an $\mc{O}_{F, Y}$-module.
This notion is due to G.~Lyubeznik (see \cite{lyubeznikfmodules}).
A ``root'' for $\mc{M}$ is
a special type of coherent generating submodule.
If $\mc{M}_0 \subseteq \mc{M}$ is a root, then the
images of $\mc{M}_0$ under repeated applications of the map
$F$ determine an ascending filtration for the sheaf $\mc{M}$.
(See Definition~\ref{rootdef} and Proposition~\ref{rootfiltration}).
In this paper we develop the properties of roots
in parallel with the construction of the Riemann-Hilbert correspondence.

One interesting result that arises 
is Theorem~\ref{minrootthm}, which asserts the existence
of canonical minimal roots for $\mc{O}_{F, X}$-modules in dimension one.
(This result is critical for 
Theorem~\ref{mainthmquoted}.  The local term
$\mathfrak{C} \left( M_{(y)} \right)$ is based on a measurement
of the minimal root of $\mc{M}$.)  I am pleased
to point out that a much stronger version of this result
has been proven, independently, by M.~Blickle (\cite{blickleminroot}).
Blickle proved the existence of canonical minimal roots over
any $F$-finite regular ring.
In particular, this means that minimal roots exist on
smooth $k$-schemes of arbitrary dimension.
(This naturally suggests an extension of the definition
of $\mathfrak{C}$!)

Section~\ref{ofxmodulesection} in this paper reviews terminology for
$\mc{O}_{F, X}$-modules and establishes some basic results.
Sections~\ref{localsection} and \ref{RHcurvesection} develop
the one-dimensional Riemann-Hilbert correspondence.
(Section~\ref{localsection} contains local results, and
Section~\ref{RHcurvesection} contains global results.)
Theorem~\ref{mainthmquoted} appears in subsection~\ref{cohomologysubsection}.
(The proof is similar to the one used in \cite{eulerpink}.)
The paper closes with three examples involving sheaves
on the projective line.

\subsection{Further directions}

A natural goal is to determine conditions under which
formula~(\ref{mainthmquotedformula}) yields an equality.  Empirical
evidence suggests that equality occurs ``generically,'' and it would
be interesting to make that statement more precise.
Another goal is to gain a better understanding of the local invariant
$\mathfrak{C} \left( M_{(y)} \right)$.
The invariant is defined in this paper in terms $\mc{O}_{F, Y}$-modules,
but it should be possible to understand it directly in terms of the
localization $M_{(y)}$ (which is a sheaf on $\Spec \mc{O}_{Y_{\textnormal{\'et}},y}$).
The localization $M_{(y)}$ is essentially an equicharacteristic Galois representation.

Another direction has to do with $p$-adic cohomology.  Theorem~\ref{mainthmquoted}
is an assertion about $p$-torsion sheaves, but without much difficulty
it can be converted into a statement about \'etale $\mathbb{Q}_p$-sheaves.
The $\mathbb{Q}_p$-version of the theorem might have interesting connections
with other known results on $p$-adic cohomology.  (Consider
for example Theorem~4.3.1 in \cite{kedlaya}, which is a Grothendieck-Ogg-Shafarevich
formula for rigid cohomology.)

\subsection{Acknowledgements} This paper is an abridged version
of my dissertation at UC-Berkeley.  I want to gratefully acknowledge the
help of my mentors, Arthur Ogus, Martin Olsson, and Brian Conrad, who
have had an extensive influence on the shape of the material here.
Special thanks go to Arthur Ogus (my thesis advisor), who was my audience when I was
working on the main result.  Also, I want to thank some other
colleagues for conversations about the material (some brief
but enlightening): Manuel Blickle, Igor Dolgachev,
Kiran Kedlaya, Gennady Lyubeznik,
Mark Kisin, Jacob Lurie, Brian Osserman,
Richard Pink, Bjorn Poonen, Karen Smith,
and Nicolas Stalder.

\subsection{Notation and conventions}

Throughout this paper, $p$ denotes a prime, $r$ denotes a
positive integer, and $k$ denotes an algebraically closed field
of characteristic $p$.
All sheaves are assumed to be sheaves on an \'etale site.
Thus, if $X$ is a $k$-scheme, $\mc{O}_X$ denotes the
\'etale coordinate sheaf on $X$.  If $x$ is a $k$-point of $X$,
then $\mc{O}_{X, x}$ denotes the (\'etale) stalk of $\mc{O}_X$
at $x$.

If $R$ is a ring, let $\lmod ( R )$ (or simply $\mod ( R)$,
if $R$ is commutative) denote the category of left
$R$-modules.  If $X$ is a scheme and $\mathcal{R}$ is
a sheaf of rings on $X$, let $\textnormal{(}\textbf{L}\textnormal{)}\textbf{Mod}
( X, \mathcal{R} )$ denote the category of (left) $\mathcal{R}$-modules
on $X$.

If $S$ is a $k$-algebra, let $F_S \colon S \to S$ denote
the Frobenius map.  If $Z$ is a $k$-scheme, let $F_Z \colon 
Z \to Z$ denote the Frobenius morphism.

All schemes are assumed to be noetherian and separated.

\section{$\mc{O}_{F^r,X}$-modules}

\label{ofxmodulesection}

Let $X$ be a $k$-scheme.
This section is concerned with quasi-coherent $\mc{O}_X$-modules
that have Frobenius-linear endomorphisms.  For the study of these modules it is convenient to
introduce the sheaf $\mc{O}_{F^r, X}$. This is a sheaf of noncommutative
rings in which the multiplication rule is determined by the $r$th Frobenius
map on $\mc{O}_X$.  Notation and terminology in this section
are borrowed from \cite{emertonandkisin}.

\subsection{The category of left $\mc{O}_{F^r, X}$-modules}

Let $S$ be a $k$-algebra.
Then $S[F^r]$ is the twisted
polynomial ring determined by the $r$th Frobenius endomorphism,
\begin{eqnarray}
(F_S)^r \colon S \to S.
\end{eqnarray}
Elements of the ring $S[F^r]$ may be expressed as finite sums of the
form
\begin{eqnarray}
\sum_{i \geq 0} s_i F^{ri},
\end{eqnarray}
and multiplication is determined by the
rule $F^r s = s^{(p^r)} F^r$.  Similarly, let $X$ be a $k$-scheme.
Then $\mc{O}_{F^r, X}$ denotes the sheaf of twisted
polynomial rings determined by $F_X^r \colon X \to X$.  
If $\Spec T \subseteq X$ is any affine open inside of $X$,
then the sections of $\mc{O}_{F^r, X}$ over $\Spec T$ form the
ring $T[F^r]$.

This notation provides a convenient way to express
Frobenius-linear endomorphisms.  Let $M$ be an $S$-module.
Then a left $S[F^r]$-module structure of $M$ is uniquely specified
by an additive endomorphism $\phi \colon M \to M$ satisfying
$\phi ( sm ) = s^{(p^r)} \phi (m)$ for all $s \in S$, $m \in M$.  (The
map $\phi$ expresses the left action of $F^r$ on $M$.)  Thus
a left $S[F^r]$-module is simply an $S$-module with a Frobenius-linear
endomorphism.

If $\mc{M}$ is a left $\mc{O}_{F^r, X}$-module, then the
sheaf endomorphism $\mc{M} \to \mc{M}$ determined by the action 
of $F^r$ determines a morphism
\begin{eqnarray}
F^{r*}_X \mc{M} \to \mc{M}
\end{eqnarray}
which is $\mc{O}_X$-linear.  We refer to this homomorphism
as the structural morphism of $\mc{M}$.

\begin{definition}
A \textnormal{unit $\mc{O}_{F^r, X}$-module} is a left $\mc{O}_{F^r, X}$-module
which is quasi-coherent (as an $\mc{O}_X$-module) and 
whose structural morphism is an isomorphism.
\end{definition}

The term ``unit'' may be similarly applied to modules over rings.
A left $S[F^r]$-module is unit if its structural homomorphism
$F^{r*}_S M \to M$ is an isomorphism.  Let $\lmod^u (S[F^r])$
be the full subcategory of unit $S[F^r]$-modules in
$\lmod ( S[F^r] )$.  Additionally, let $\lmod^{fu} (S[F^r])$
be the subcategory of $\lmod^u ( S[F^r] )$ consisting
of those objects which are finitely-generated left $S[F^r]$-modules.
(We shall call these simply ``finitely-generated unit $S[F^r]$-modules.'')

The categories $\lmod^u ( X, \mc{O}_{F^r, X} )$ and $\lmod^{fu}
( X, \mc{O}_{F^r, X} )$ are defined similarly.  The category
$\lmod^u ( X, \mc{O}_{F^r, X} )$ consists of the unit $\mc{O}_{F^r, X}$-modules,
and the category $\lmod^{fu} ( X, \mc{O}_{F^r, X} )$ consists of those
unit $\mc{O}_{F^r, X}$-modules which are finitely-generated once restricted
to any affine open subset of $X$.  Using terminology from
\cite{emertonandkisin}, we refer to objects from $\lmod^{fu} ( X, 
\mc{O}_{F^r, X} )$ as {\it locally finitely-generated unit (lfgu)
$\mc{O}_{F^r, X}$-modules}.  (Note that ``finitely-generated''
refers to the left $\mc{O}_{F^r, X}$-module structure of 
the sheaf in question, not to its $\mc{O}_X$-module structure.)

Let $f \colon X \to Y$ be a morphism of schemes over $k$, and
let $\mc{M}$ be a unit $\mc{O}_{F^r, Y}$-module.
The $\mc{O}_X$-module pullback $f^* \mc{M}$ of $\mc{M}$ is given by
\begin{eqnarray}
f^* \mc{M} = \mc{O}_X \otimes_{f^{-1} \mc{O}_Y} f^{-1} \mc{M}.
\end{eqnarray}
There is a natural left $\mc{O}_{F^r, X}$-module structure
on $f^* \mc{M}$ which is expressed by the rule
\begin{eqnarray}
F^r ( f \otimes m ) = f^{(p^r)} \otimes F^r ( m ).
\end{eqnarray}

\begin{proposition}
Let $f \colon X \to Y$ be a morphism of schemes over $k$, and let
$\mc{M}$ be a unit $\mc{O}_{F^r, Y}$-module.  Then the pullback
$f^* \mc{M}$ is a unit $\mc{O}_{F^r, X}$-module.  If $\mc{M}$
is locally finitely-generated, then $f^* \mc{M}$ is locally
finitely-generated.
\end{proposition}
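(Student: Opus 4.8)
The plan is to reduce both assertions to local computations on affine opens, where the sheaf-theoretic pullback becomes the ordinary tensor product of modules, and then to exploit the explicit formula $F^r(f \otimes m) = f^{(p^r)} \otimes F^r(m)$ together with flat base change for the Frobenius. First I would observe that the statement is local on both $X$ and $Y$: quasi-coherence, the unit property (isomorphism of the structural morphism), and local finite generation of a left $\mc{O}_{F^r,X}$-module may all be checked on an affine open cover. So I may assume $X = \Spec T$, $Y = \Spec S$, and that $\mc{M}$ corresponds to a unit $S[F^r]$-module $M$ with structural isomorphism $\theta \colon F_S^{r*} M \xrightarrow{\sim} M$; here $f^*\mc{M}$ corresponds to $T \otimes_S M$ with the $T[F^r]$-action described above.

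The key step is to identify the structural morphism of $f^*\mc{M}$ explicitly. The structural morphism $F_X^{r*} f^* \mc{M} \to f^* \mc{M}$ is, after passing to modules, the map $F_T^{r*}(T \otimes_S M) \to T \otimes_S M$. Using the canonical isomorphism $F_T^{r*}(T \otimes_S M) \cong T \otimes_{S, F_S^r} M \cong T \otimes_S (F_S^{r*} M)$ — which amounts to commuting the two base changes $S \to T$ and $F_S^r \colon S \to S$, legitimate since tensor products commute — this structural morphism becomes $\mathrm{id}_T \otimes \theta \colon T \otimes_S (F_S^{r*} M) \to T \otimes_S M$. Since $\theta$ is an isomorphism, so is $\mathrm{id}_T \otimes \theta$; hence $f^*\mc{M}$ is a unit $\mc{O}_{F^r, X}$-module. (Quasi-coherence of $f^*\mc{M}$ as an $\mc{O}_X$-module is standard and needs no comment.) The main obstacle, such as it is, lies precisely in checking that this diagram of base-change isomorphisms commutes and is compatible with the stated Frobenius action $F^r(f\otimes m)=f^{(p^r)}\otimes F^r(m)$; this is a diagram chase that is routine but must be done carefully, since the twisted (Frobenius-semilinear) nature of the maps makes it easy to misidentify which tensor factor carries the $p^r$-power twist.

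For the finite-generation claim, suppose $\mc{M}$ is locally finitely-generated, so (shrinking $Y$ if necessary) $M$ is generated as an $S[F^r]$-module by finitely many elements $m_1, \dots, m_t$. I claim the images $1 \otimes m_1, \dots, 1 \otimes m_t$ generate $T \otimes_S M$ as a $T[F^r]$-module. Indeed, the $T[F^r]$-submodule they generate is a $T$-submodule of $T\otimes_S M$ stable under $F^r$; it contains $1 \otimes m_i$ and, being $F^r$-stable and $T$-linear, it contains $t \otimes F^{rj}(m_i) = (t^{1/p^{rj}})^{(p^{rj})} \otimes F^{rj}(m_i) = F^{rj}\bigl((t^{1/p^{rj}}) \otimes m_i\bigr)$ for every $t \in T$ — here I use that $k$, hence $T$, is $F$-finite, though in fact one only needs that every element of $T$ lies in the image of some power of Frobenius up to the $T$-module structure, which holds because $1\otimes s^{(p^r)}m = 1\otimes F^r(\theta^{-1}(sm))$-type manipulations let us absorb scalars. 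More cleanly: since $M = \sum_j F^{rj}(S m_1 + \dots + S m_t)$ as an $S$-module (this is the unfolding of "generated by $m_i$ over $S[F^r]$"), applying $T \otimes_S -$ gives $T\otimes_S M = \sum_j (T\otimes_S F_S^{rj*}(Sm_1+\dots+Sm_t))$, and under the structural isomorphisms this is exactly the $T[F^r]$-submodule generated by the $1\otimes m_i$. Thus $f^*\mc{M}$ is finitely generated over $T[F^r]$ on each affine open, i.e. locally finitely-generated, completing the proof.
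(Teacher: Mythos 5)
Your argument for the unit claim matches the paper's: the structural morphism of $f^*\mc{M}$ factors as the base-change isomorphism $F_X^{r*}f^*\mc{M}\cong f^*F_Y^{r*}\mc{M}$ (coming from $f\circ F_X^r=F_Y^r\circ f$) followed by $f^*$ applied to the structural isomorphism of $\mc{M}$; over affines this composite is exactly your $\mathrm{id}_T\otimes\theta$. Note that flatness plays no role here --- the base-change isomorphism is a purely formal consequence of the commuting square, as your own element-level computation shows --- so the phrase ``flat base change for the Frobenius'' in your opening is a red herring.

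Your finite-generation argument is the same idea as the paper's (pull back a finite generating set and check it still generates), but your first attempt contains a genuine error: $F$-finiteness of $T$ does \emph{not} supply $p^{rj}$-th roots of elements of $T$ --- that would require $T$ to be perfect, and $F$-finiteness only says $T$ is module-finite over its subring of $p$-th powers. Your ``more cleanly'' paragraph reaches the right conclusion, but the appeal to $F_S^{rj*}$ of a submodule is a distraction: to identify that pullback with a submodule of $M$ one would want flatness of Frobenius, which is not available since $Y$ is not assumed smooth here. Neither device is needed. Since $F^{rj}(1\otimes m_i)=1^{p^{rj}}\otimes F^{rj}(m_i)=1\otimes F^{rj}(m_i)$, the $T[F^r]$-submodule generated by the $1\otimes m_i$ is a $T$-submodule containing each $1\otimes F^{rj}(m_i)$, hence containing every $t\otimes F^{rj}(m_i)$ by $T$-linearity alone; and these span $T\otimes_S M$ because the $F^{rj}(m_i)$ span $M$ over $S$ and $T\otimes_S(-)$ is right exact.
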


\begin{proof}
The structural morphism of $f^* \mc{M}$ is the composite of
two morphisms,
\begin{eqnarray}
F_X^{r*} f^* \mc{M} \to f^* F_Y^{r*} \mc{M} \to f^* \mc{M},
\end{eqnarray}
where the first arises from the commutative diagram
\begin{eqnarray}
\xymatrix{
X \ar[r]^{F^r_X} \ar[d]^f & X \ar[d]^f \\
Y \ar[r]^{F^r_Y} & Y, }
\end{eqnarray}
and the second is determined by the structural morphism
for $\mc{M}$.  Both maps are isomorphisms.  Therefore
$f^* \mc{M}$ is a unit $\mc{O}_{F^r, X}$-module.

Now suppose that $\mc{M}$ is locally finitely-generated.
The condition that $f^* \mc{M}$ is lfgu needs only to be checked
locally.  Choose any closed point $x$ in $X$.  Let $V \subset Y$
be an affine open subscheme which contains $f(x)$.  Let
$\{ m_1, \ldots, m_r \} \subseteq \mc{M} ( V )$ be a set
which generates $\mc{M}_{\mid V}$ as
a left $\mc{O}_{F^r, V}$-module.  The pullbacks of 
$\{ m_1, \ldots, m_r \}$ to $f^{-1} (V)$ generate
$(f^* \mc{M})_{\mid f^{-1} (V)}$ as a left 
$\mc{O}_{F^r, f^{-1} (V)}$-module.
Thus $f^* \mc{M}$ is
finitely-generated on an open neighborhood of $x$. 
\end{proof}

\subsection{Roots of lfgu $\mc{O}_{F^r, X}$-modules}

While sheaves in the category $\lmod^{fu} ( X, \mc{O}_{F^r, X} )$ are not
necessarily coherent, they have special coherent 
subsheaves which capture their structure.  The concept
of a root is due to Lyubeznik (see \cite{lyubeznikfmodules}).

\begin{definition}
\label{rootdef}
Let $X$ be a $k$-scheme, and let $\mc{M}$ be
a unit $\mc{O}_{F^r, X}$-module.  An $\mc{O}_X$-submodule
$\mc{M}' \subseteq \mc{M}$ is a \textnormal{root}
if
\begin{enumerate}
\item the $\mc{O}_X$-module $\mc{M}'$ is coherent,

\item the $\mc{O}_X$-submodule of $\mc{M}$ generated
by $F^r \left( \mc{M}' \right)$ contains $\mc{M}'$, and

\item as a left $\mc{O}_{F^r, X}$-module, $\mc{M}$ is
generated by $\mc{M}'$.
\end{enumerate}
\end{definition}

If a unit $\mc{O}_{F^r, X}$-module has a root, then
it is locally finitely-generated.  (This is easily deduced
from properties $1$ and $3$ above.)  The next proposition asserts
that the converse is also true.

\begin{proposition}
Let $Y$ be a smooth $k$-scheme, and let $\mc{M}$ 
be a unit $\mc{O}_{F^r, Y}$-module.  If $\mc{M}$
is locally finitely-generated, then it has a root.
\end{proposition}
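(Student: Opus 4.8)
The plan is to reduce to the affine case and there produce a root by a direct Noetherian argument. Since $Y$ is smooth (hence locally Noetherian) and the conclusion ``$\mc{M}$ has a root'' can be checked on an affine open cover — the three defining conditions of a root are local, and gluing coherent submodules satisfying the local conditions presents no difficulty because roots, once they exist, can be enlarged (if $\mc{M}_0$ is a root so is any coherent submodule containing it and contained in the $\mc{O}_X$-module generated by $F^r(\mc{M}_0)$), so one patches by taking a common refinement — I would immediately assume $Y = \Spec T$ with $T$ a smooth (in particular regular Noetherian) $k$-algebra, and $\mc{M} = \widetilde{M}$ for a finitely-generated unit $T[F^r]$-module $M$.

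First I would pick a finite generating set $m_1, \ldots, m_s$ of $M$ as a left $T[F^r]$-module and let $M_0 = \sum_i T m_i$, the $T$-submodule they span; this is a finitely-generated, hence coherent, $T$-submodule satisfying condition (3) automatically. The issue is condition (2): we need $M_0 \subseteq T \cdot F^r(M_0)$, i.e., $M_0$ is contained in the $T$-span of $\{F^r(x) : x \in M_0\}$. This need not hold for the naive choice, so next I would enlarge $M_0$. The key observation is that the structural morphism $F^{r*}_T M \to M$ is an isomorphism, so $M = T \cdot F^r(M)$; writing each generator $m_i$ as a $T$-linear combination of elements $F^r(y_{ij})$ with $y_{ij} \in M$, and then (using that $m_1,\dots,m_s$ generate $M$ over $T[F^r]$, so each $y_{ij}$ lies in $\sum T F^{r\,e} m_\ell$ for some $e$) replacing $M_0$ by $M_0 + \sum_{0 \le e \le N} T F^{r\,e} m_i$ for $N$ large enough, one can arrange that $M_0 \subseteq T \cdot F^r(M_0)$ while keeping $M_0$ finitely generated over $T$ and still a $T[F^r]$-generating set. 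Here I would lean on the twisted-multiplication rule $F^r t = t^{(p^r)} F^r$ to see that $T \cdot F^r(M_0)$ is an honest $T$-submodule and that $F^r$ of a finite $T$-module lands in a finite $T$-module.

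The main obstacle is showing this enlargement process terminates — that is, that one actually reaches an $M_0$ with $M_0 \subseteq T \cdot F^r(M_0)$ rather than chasing an infinite ascending chain. Here is where smoothness (regularity) of $T$ enters: I expect to argue via the ``ascending sequence'' of $T$-submodules $M_0 \subseteq F^{r*}M_0$-image $\subseteq \cdots$, i.e., setting $M_e = T \cdot F^{r\,e}(M_0)$ (equivalently the image of $\mc{O}_Y \otimes_{F^{r\,e}} M_0$), one gets $M_0 \subseteq M_1 \subseteq M_2 \subseteq \cdots$ once the first inclusion $M_0 \subseteq M_1$ is arranged, with union $M$; and conversely, to \emph{arrange} $M_0 \subseteq M_1$ one uses that $F^{r*}_T(-)$ is \emph{exact} on $T$-modules because $T$ is regular and $F_T$ is flat (Kunz's theorem). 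Flatness of Frobenius means the functor $N \mapsto T \cdot F^r(N) = F^{r*}_T N$ preserves inclusions of submodules, so if $M_0 \subseteq M_0'$ then $M_1 \subseteq M_1'$; picking $M_0$ large enough to contain a chosen $T$-module preimage of itself under the structural iso then propagates. I would then verify conditions (1)–(3) for the resulting $M_0$: coherence is clear, (3) is inherited from the original generating set, and (2) is exactly the arranged inclusion $M_0 \subseteq M_1 = T \cdot F^r(M_0)$. Finally I would remark that the affine roots so constructed can be glued over a finite affine cover of $Y$ by the enlargement trick noted above, completing the proof.
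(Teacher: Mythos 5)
The paper itself disposes of this proposition by citing Theorem~6.1.3 of Emerton--Kisin, so you are necessarily giving an independent argument. Your affine construction is essentially correct: taking $M_0'$ to be the $T$-span of $\{F^{re}(m_i) : 0 \le e \le N,\ 1 \le i \le s\}$, where $N$ is large enough that each $y_{ij}$ already lies in that span, produces a root \emph{in a single step}, so your concern that the enlargement process might chase an infinite ascending chain is unfounded. Note also that Kunz's theorem and regularity of $T$ are not actually needed here; noetherianness together with surjectivity of the structural morphism already suffice for the affine existence of a root. (Flatness of Frobenius is what the paper needs \emph{later}, in Proposition~\ref{rootfiltration}, to show that the maps $F^{rn*}_X \mc{M}_0 \to \mc{M}_n$ are injective, hence isomorphisms.)

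The genuine gap is the gluing. You assert that ``one patches by taking a common refinement'' because a root can be enlarged within the $\mc{O}_X$-span of its image under $F^r$, but this observation does not yield a gluing procedure. Roots produced independently on two overlapping affines $U_1$, $U_2$ will generally disagree on $U_1 \cap U_2$, and the enlargement remark only lets you grow a root inside its own filtration; synchronizing the root on $U_1$ with the one on $U_2$ by alternately enlarging each to absorb the other's restriction is a back-and-forth process with no termination argument, and the poset of roots on $U_1 \cap U_2$ is not one from which a common bound visibly extends to both sides. The clean repair is to globalize \emph{before} enlarging. Cover $Y$ by finitely many affine opens $U_i$, choose on each a finite set of sections generating $\mc{M}_{\mid U_i}$ as a left $\mc{O}_{F^r, U_i}$-module, note that each such section lies in a coherent $\mc{O}_{U_i}$-subsheaf of $\mc{M}_{\mid U_i}$, and extend each of those (by noetherianness of $Y$) to a coherent subsheaf of $\mc{M}$ on all of $Y$; their sum is a global coherent $\mc{O}_Y$-submodule $\mc{C} \subseteq \mc{M}$ generating $\mc{M}$ as a left $\mc{O}_{F^r, Y}$-module. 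Surjectivity of the structural morphism then gives $\mc{M} = \sum_{e \ge 1} \mc{C}_e$, where $\mc{C}_e$ denotes the $\mc{O}_Y$-submodule generated by $F^{re}(\mc{C})$; coherence of $\mc{C}$ and noetherianness force $\mc{C} \subseteq \sum_{e=1}^{N} \mc{C}_e$ for some $N$, whereupon $\mc{M}_0 := \sum_{e=0}^{N-1} \mc{C}_e$ satisfies all three conditions of Definition~\ref{rootdef} on all of $Y$. This is just your one-step affine enlargement carried out globally, and it replaces the patching entirely.
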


\begin{proof}
This is Theorem~6.1.3 from \cite{emertonandkisin}.
\end{proof}

The next two propositions will be useful in later parts
of this paper.  The first asserts that a root
of a unit $\mc{O}_{F^r, X}$-module determines a filtration
of the module by coherent subsheaves.  The second proposition
asserts that this filtration collapses on an open dense
subset of the scheme $X$.

\begin{proposition}
\label{rootfiltration}
Let $Y$ be a smooth $k$-scheme.  Let $\mc{M}$ be an
lfgu $\mc{O}_{F^r, Y}$-module, and let $\mc{M}_0 \subseteq
\mc{M}$ be a root for $\mc{M}$.  For each $n \geq 1$, let
$\mc{M}_n$ be the $\mc{O}_X$-submodule of $\mc{M}$ generated by
$F^{rn} \left( \mc{M}_0 \right)$.  The sequence 
$\left( \mc{M}_n \right)$ is an ascending filtration of $\mc{M}$.
For each $n \geq 0$, the structural morphism of $\mc{M}$ determines
a map
\begin{eqnarray}
\label{filtrationisomorphism}
F^{rn*}_X \mc{M}_0 \to \mc{M}_n
\end{eqnarray}
which is an isomorphism.
\end{proposition}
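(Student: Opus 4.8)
The plan is to reduce everything to the affine case, where $Y = \Spec T$ for a smooth $k$-algebra $T$, $\mc{M}$ corresponds to a unit $T[F^r]$-module $M$, and $\mc{M}_0$ corresponds to a finitely-generated $T$-submodule $M_0 \subseteq M$ satisfying the root conditions. All the assertions---that the $\mc{M}_n$ form an ascending filtration exhausting $\mc{M}$, and that the natural map $F_X^{rn*}\mc{M}_0 \to \mc{M}_n$ is an isomorphism---are local on $Y$, so it suffices to verify them on each affine open, and then they glue. First I would record that $M_n$, the $T$-submodule of $M$ generated by $F^{rn}(M_0)$, is the image of the composite $F_T^{rn*}M_0 \to F_T^{rn*}M \to M$, where the second arrow is the $n$-fold iterate of the structural morphism of $M$; by definition $M_n = T\cdot F^{rn}(M_0)$, so this image description is immediate.

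Next I would establish that the filtration is ascending, i.e.\ $M_n \subseteq M_{n+1}$ for all $n \geq 0$. The key input is root condition~(2): $M_0$ is contained in the $T$-submodule generated by $F^r(M_0)$, that is, $M_0 \subseteq M_1$. Applying $F^{rn}$ to this inclusion and taking the $T$-submodule generated by both sides, one gets $M_n = T\cdot F^{rn}(M_0) \subseteq T\cdot F^{rn}(M_1)$. It then remains to check $T\cdot F^{rn}(M_1) \subseteq M_{n+1}$, which follows because $F^{rn}(M_1) = F^{rn}(T\cdot F^r(M_0)) \subseteq T\cdot F^{rn}(F^r(M_0)) = T\cdot F^{r(n+1)}(M_0) = M_{n+1}$, using that $F^r$ is Frobenius-linear (so $F^{rn}(t\cdot F^r m) = t^{(p^{rn})} F^{r(n+1)}(m)$). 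Combined with root condition~(3), which says $M = \bigcup_n M_n$ since $M$ is generated as a left $T[F^r]$-module by $M_0$, this shows $(\mc{M}_n)$ is an ascending filtration of $\mc{M}$.

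The heart of the matter is the isomorphism claim~(\ref{filtrationisomorphism}): the natural $\mc{O}_X$-linear map $F_X^{rn*}\mc{M}_0 \to \mc{M}_n$ induced by the structural morphism is an isomorphism. Surjectivity is clear from the image description above. For injectivity I would argue by induction on $n$, the case $n=0$ being trivial. Factor the map $F_X^{rn*}\mc{M}_0 \to \mc{M}_n$ through $F_X^{r*}\mc{M}_{n-1}$: applying the exact functor $F_X^{r*}$ (flat, since $Y$ is smooth hence $F_X$ is flat) to the inductive isomorphism $F_X^{r(n-1)*}\mc{M}_0 \xrightarrow{\sim} \mc{M}_{n-1}$ gives $F_X^{rn*}\mc{M}_0 \xrightarrow{\sim} F_X^{r*}\mc{M}_{n-1}$, and then one must show $F_X^{r*}\mc{M}_{n-1} \to \mc{M}_n$ is an isomorphism. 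This last map is the restriction to $F_X^{r*}\mc{M}_{n-1} \hookrightarrow F_X^{r*}\mc{M}$ of the structural isomorphism $F_X^{r*}\mc{M} \xrightarrow{\sim} \mc{M}$ of the \emph{unit} module $\mc{M}$; its image is exactly $\mc{M}_n$ by construction, and since $F_X^{r*}$ applied to the inclusion $\mc{M}_{n-1}\hookrightarrow\mc{M}$ stays injective by flatness of $F_X$, the restricted map is injective as well, hence an isomorphism onto $\mc{M}_n$. I expect the main obstacle to be bookkeeping the flatness of Frobenius correctly---one genuinely uses smoothness of $Y$ here (Kunz's theorem, that $F_X$ is flat iff $Y$ is regular), and the inductive step only works because $F_X^{r*}$ preserves the inclusions among the $\mc{M}_n$; I would state this flatness explicitly at the outset so the induction runs cleanly.
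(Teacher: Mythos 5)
Your proof is correct and relies on the same two essential ingredients as the paper's: Kunz's theorem that the Frobenius on a smooth (regular) $k$-scheme is flat, and the structural isomorphism of the unit module. The only organizational difference is that the paper handles the isomorphism~(\ref{filtrationisomorphism}) in a single step without induction---it observes directly that flatness of $F^{rn}_X$ makes $F^{rn*}_X \mc{M}_0 \hookrightarrow F^{rn*}_X \mc{M}$ injective, and then composes with the $n$th power of the structural isomorphism $F^{rn*}_X \mc{M} \cong \mc{M}$, noting the image of the composite is $\mc{M}_n$ by definition; your induction on $n$ is sound but collapses into this one-shot argument.
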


\begin{proof}
We prove the last assertion first.  Since $X$ is smooth, the 
Frobenius morphism $F^{rn}_X \colon X \to X$ is flat and finite.
Therefore the inclusion
\begin{eqnarray}
\mc{M}_0 \hookrightarrow \mc{M}
\end{eqnarray}
induces an injection
\begin{eqnarray}
F^{rn*}_X \mc{M}_0 \hookrightarrow F^{rn*}_X \mc{M}
\end{eqnarray}
Composing this injection with the $n$th power of the structural
morphism of $\mc{M}$ yields an injection
\begin{eqnarray}
F^{rn*}_X \mc{M}_0 \hookrightarrow \mc{M}
\end{eqnarray}
whose image (by definition) is $\mc{M}_n$.
Thus we obtain the desired isomorphism.

The assertion that $\left( \mc{M}_n \right)$ is an ascending
filtration follows from the observation that $\mc{M}_{n+1} 
\subseteq \mc{M}$ is the sub-$\mc{O}_X$-module generated
by $F^{rn} ( \mc{M}_1 )$.  Since $\mc{M}_1$ contains
$\mc{M}_0$ (by property $2$ of Definition~$\ref{rootdef}$), 
$\mc{M}_{n+1}$ contains $\mc{M}_n$.  Property $3$
of Definition~$\ref{rootdef}$ implies that the union
of the submodules $\mc{M}_n$ is $\mc{M}$. 
\end{proof}

\begin{proposition}
\label{densecoherent}
Let $Y$ be a smooth $k$-scheme.  Let $\mc{M}$ be
an lfgu $\mc{O}_{F^r, Y}$-module, and let $\mc{M}_0
\subseteq \mc{M}$ be a root for $\mc{M}$.  Then there
exists an open dense subset $U \subseteq Y$ such that
$\mc{M}_{0 \mid U} = \mc{M}_{\mid U}$.
\end{proposition}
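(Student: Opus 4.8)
The plan is to show that the root $\mc{M}_0$ already coincides with the first stage $\mc{M}_1$ of the filtration of Proposition~\ref{rootfiltration} on an open dense subset of $Y$, and then to propagate this coincidence up the entire filtration. So let $\mc{M}_1 \subseteq \mc{M}$ be the $\mc{O}_Y$-submodule generated by $F^r(\mc{M}_0)$, as in Proposition~\ref{rootfiltration}. The case $n=1$ of the isomorphism~(\ref{filtrationisomorphism}) gives an $\mc{O}_Y$-module isomorphism $F^{r*}_Y \mc{M}_0 \to \mc{M}_1$, so $\mc{M}_1$ is coherent; and property~2 of Definition~\ref{rootdef} gives the inclusion $\mc{M}_0 \subseteq \mc{M}_1$. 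Thus $\mc{Q} := \mc{M}_1 / \mc{M}_0$ is a coherent $\mc{O}_Y$-module and $Z := \mathrm{Supp}(\mc{Q})$ is a closed subset of $Y$.

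The main point is to check that $Z$ contains no generic point of $Y$. Fix such a point $\eta$; since $Y$ is smooth, and hence regular, $K := \mc{O}_{Y,\eta}$ is a field, and $(\mc{M}_0)_\eta$ is a finite-dimensional $K$-vector space. Taking stalks at $\eta$ in the isomorphism $F^{r*}_Y \mc{M}_0 \to \mc{M}_1$ identifies $(\mc{M}_1)_\eta$ with the base change of $(\mc{M}_0)_\eta$ along the $p^r$-power endomorphism of $K$, which has the same finite $K$-dimension as $(\mc{M}_0)_\eta$. Hence the stalk at $\eta$ of $\mc{M}_0 \hookrightarrow \mc{M}_1$ is an injection between $K$-vector spaces of equal finite dimension, so it is an isomorphism and $\mc{Q}_\eta = 0$. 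It follows that $Z$ avoids every generic point of $Y$, so $U := Y \smallsetminus Z$ is open and dense; and $\mc{M}_{0 \mid U} = \mc{M}_{1 \mid U}$ by the definition of $Z$.

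It remains to deduce that $\mc{M}_{\mid U} = \mc{M}_{0 \mid U}$. I would restrict the filtration $(\mc{M}_n)$ to $U$ and prove by induction on $n$ that $\mc{M}_{n \mid U} = \mc{M}_{0 \mid U}$; the cases $n = 0, 1$ are already in hand. For the inductive step, recall from the proof of Proposition~\ref{rootfiltration} that $\mc{M}_{n+1}$ is the $\mc{O}_Y$-submodule of $\mc{M}$ generated by $F^{rn}(\mc{M}_1)$. Since forming the submodule generated by a subsheaf is a local construction, this description survives restriction to $U$; and since $\mc{M}_{1 \mid U} = \mc{M}_{0 \mid U}$, the submodule so generated equals the one generated by $F^{rn}(\mc{M}_{0 \mid U})$, namely $\mc{M}_{n \mid U}$. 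Thus $\mc{M}_{n+1 \mid U} = \mc{M}_{n \mid U}$, which together with the inductive hypothesis gives $\mc{M}_{n+1 \mid U} = \mc{M}_{0 \mid U}$. Since $\mc{M} = \bigcup_n \mc{M}_n$, we conclude $\mc{M}_{\mid U} = \mc{M}_{0 \mid U}$, as required.

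The only step that is not routine bookkeeping is the generic-rank computation in the second paragraph: the assertion that pulling a coherent sheaf back along the Frobenius morphism of a regular scheme cannot enlarge it at a generic point. This is the one place where regularity (i.e.\ smoothness) is used, via the isomorphism of Proposition~\ref{rootfiltration}, and it is the step I would expect to require the most care; everything else follows formally from the structure of the root filtration and the locality of sheaf generation.
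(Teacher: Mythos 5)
Your proof is correct and takes essentially the same approach as the paper: compare generic ranks of $\mc{M}_0$ and $\mc{M}_1$ via the isomorphism $F^{r*}_Y \mc{M}_0 \cong \mc{M}_1$ to get a dense open $U$ on which $\mc{M}_{0\mid U} = \mc{M}_{1\mid U}$, then conclude from the fact that $\mc{M}_0$ generates $\mc{M}$. The only cosmetic differences are that the paper first reduces to $Y$ irreducible (you instead treat all generic points at once, which is equally fine) and that your closing induction spells out in detail what the paper compresses into the one sentence ``since $\mc{M}_{0\mid U}$ is $F^r$-stable and generates $\mc{M}_{\mid U}$, the two must coincide.''
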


\begin{proof}
Clearly we may assume that $Y$ is irreducible (and
therefore integral).  Let $\left( \mc{M}_n \right)$
be the filtration from Proposition~\ref{rootfiltration}.
The isomorphism
\begin{eqnarray}
F^{r*}_Y \mc{M}_0 \to \mc{M}_1
\end{eqnarray}
implies that
the generic rank of $\mc{M}_1$ is the same as the generic
rank of $\mc{M}_0$.  Therefore $\mc{M}_1 / \mc{M}_0$ is
supported at a proper closed subset of $Y$.  Let $U \subseteq Y$
be the complement of this closed subset.  The sheaf 
$\mc{M}_{0 \mid U}$ is stabilized by $F^r$.  Since
$\mc{M}_{0 \mid U}$ generates $\mc{M}_{\mid U}$,
$\mc{M}_{0 \mid U}$ must coincide with $\mc{M}_{\mid U}$.
\end{proof}

Lastly, we note that the reasoning used in the last two proofs
also proves an important fact: any finitely-generated unit
$\mc{O}_{F^r, X}$-module over a field must be finite-dimensional.

\begin{proposition}
\label{findim}
Let $L$ be a field which contains $k$.  Then a unit $L[F^r]$-module
is finitely-generated if and only if it is finite-dimensional over $L$.
\end{proposition}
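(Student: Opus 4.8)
The plan is to prove both implications, with the nontrivial direction being ``finitely-generated $\Rightarrow$ finite-dimensional.'' The easy direction is immediate: if $M$ is finite-dimensional over $L$, then a fortiori it is finitely generated as an $L$-module, hence as an $L[F^r]$-module. So I concentrate on the forward direction. Let $M$ be a finitely-generated unit $L[F^r]$-module. Since $L$ is a field (hence a smooth $k$-scheme of dimension zero, or at least $F$-finite regular), the proposition preceding this one guarantees that $M$ has a root $M_0 \subseteq M$: a coherent---that is, finite-dimensional---$L$-submodule satisfying properties (1)--(3) of Definition~\ref{rootdef}.

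The key step is to run the argument of Proposition~\ref{rootfiltration} and Proposition~\ref{densecoherent} in the zero-dimensional setting and observe that the filtration collapses immediately. Concretely, form the filtration $(M_n)$ with $M_n$ the $L$-span of $F^{rn}(M_0)$. By Proposition~\ref{rootfiltration} (applied to $Y = \Spec L$, which is smooth over $k$), the structural map induces an isomorphism $F^{rn*}_L M_0 \xrightarrow{\sim} M_n$ of $L$-vector spaces. Now $F^{rn*}_L M_0 = L \otimes_{L, F^{rn}} M_0$, and since $L \to L$ via $F^{rn}$ is a field extension (injective), $\dim_L (L \otimes_{L,F^{rn}} M_0) = \dim_L M_0$. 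Hence $\dim_L M_n = \dim_L M_0$ for every $n$. Since $M_0 \subseteq M_1 \subseteq M_2 \subseteq \cdots$ is an ascending chain of subspaces all of the same finite dimension, it is constant: $M_0 = M_n$ for all $n$. But by property (3) of Definition~\ref{rootdef}, the union $\bigcup_n M_n$ equals $M$. Therefore $M = M_0$, which is finite-dimensional over $L$.

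The main (and essentially only) obstacle is making sure the cited results legitimately apply: one must check that $\Spec L$ qualifies as a ``smooth $k$-scheme'' in the sense used in Propositions~\ref{rootfiltration} and \ref{densecoherent}, so that the Frobenius on $L$ is flat and finite and the root-existence proposition is available. Since $L$ is a field containing $k$ and $k$ is perfect, $L$ is $F$-finite exactly when the extension is nice enough; but in fact the excerpt's own remark (``any finitely-generated unit $\mc{O}_{F^r,X}$-module over a field must be finite-dimensional'') signals that the authors intend precisely this collapse argument, and the dimension count $\dim_L(L \otimes_{L,F^{rn}} M_0) = \dim_L M_0$ needs only that $F^{rn}\colon L \to L$ is a ring homomorphism from a field, which is automatic. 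So the proof is really just: invoke the existence of a root, invoke the filtration isomorphism, and note that an ascending chain of subspaces of constant finite dimension stabilizes at step zero.
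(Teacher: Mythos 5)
Your dimension-counting argument—once a root exists, the filtration from Proposition~\ref{rootfiltration} is an ascending chain of subspaces of constant finite dimension, so it stabilizes at step zero and $M = M_0$—is exactly the engine of the paper's proof. But there is a genuine gap at the step where you invoke the root-existence result directly over $L$. The proposition preceding this one (and Theorem~6.1.3 of Emerton--Kisin that it cites) is stated for a smooth $k$-scheme; since $k$ is algebraically closed, $\Spec L$ is smooth over $k$ only when $L = k$. The relevant hypothesis in the background theory is that the base ring be regular and $F$-finite, and an arbitrary field $L \supseteq k$ need not be $F$-finite (e.g.\ $L = k(x_1, x_2, \ldots)$ with infinitely many indeterminates has $[L : L^p] = \infty$). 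You notice this obstacle yourself but then wave it away on the grounds that the authors ``intend'' the collapse argument; in fact the paper resolves it by a concrete maneuver you omit.

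The paper's proof first base-changes along $L \hookrightarrow \overline{L}$, an algebraic closure. Because $\overline{L}$ is perfect, it is trivially $F$-finite, so the root-existence theorem applies to the finitely-generated unit $\overline{L}[F^r]$-module $\overline{L} \otimes_L V$. The same filtration/dimension-count argument then shows $\overline{L} \otimes_L V$ is finite-dimensional over $\overline{L}$, and since $\dim_L V = \dim_{\overline{L}} (\overline{L} \otimes_L V)$, one concludes $V$ is finite-dimensional over $L$. Adding this base-change step (and the final descent) would repair your proof; as written, the root you construct over $L$ is not known to exist.
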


\begin{proof}
Let $V$ be a finitely-generated unit $L[F^r]$-module.  Choose
an algebraic closure $\overline{L}$ for $L$.  The pullback
$\overline{L} \otimes_L V$ is a finitely-generated unit $\overline{L}[F^r]$-module,
which must have a root (by Theorem 6.1.3 from \cite{emertonandkisin}).
Let $\overline{V}_0 \subseteq \overline{L} \otimes_L V$ be a root.
As in the proof of Proposition~\ref{rootfiltration}, this root
determines a filtration of $\overline{L} \otimes V$,
\begin{eqnarray}
\overline{V}_0 \subseteq \overline{V}_1 \subseteq \overline{V}_2
\subseteq \ldots ,
\end{eqnarray}
in which adjacent terms have isomorphisms $F_{\overline{L}}^{r*} \overline{V}_n \cong
\overline{V}_{n+1}$.  Each term in this filtration must have the same (finite)
dimension.  Therefore $\overline{L} \otimes V$ is finite-dimensional
over $\overline{L}$, and $V$ is finite-dimensional over $L$.

The converse is immediate.
\end{proof}

\section{A local analysis of $\mc{O}_{F^r, X}$-modules in dimension one}

\label{localsection}

Throughout this section,
let $A$ be a Henselization of the local ring $k[t]_{(t)}$.  (For
example, one can let $A$ be the ring of elements of $k[[t]]$ 
that are algebraic over $k(t)$.)  Additionally, let $K$ 
denote the fraction field of $A$.
Note that if $X$ is any smooth $k$-curve,
then the stalk of its \'etale coordinate sheaf at any
closed point is isomorphic to $A$.  Thus unit $\mc{O}_{F^r, X}$-modules
localize to unit $A[F^r]$-modules.

This section is concerned with finitely-generated unit $A[F^r]$-modules.  We 
are primarily interested in those which
are torsion-free as $A$-modules.
The goals of this section are (1) to establish the local Riemann-Hilbert
correspondence (Theorem~\ref{localRH}), and (2) to define the ``minimal root index'' of
a unit $A[F^r]$-module.

The following algebraic result is a starting point.

\begin{proposition}
\label{sepclosedfrobenius}
Let $L$ be a separably closed field of characteristic $p$.  Let
$H$ be a unit $L[F^r]$-module which is finite-dimensional over
$L$.  Then the set $H^{(F^r)}$ of $F^r$-invariant elements of $H$
spans $H$.  This set forms an $\mathbb{F}_{p^r}$-vector space.
The map
\begin{eqnarray}
H^{(F^r)} \otimes_{\mathbb{F}_{p^r}} L \to H
\end{eqnarray}
is an isomorphism.
\end{proposition}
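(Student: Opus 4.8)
The plan is to reduce to the case $r=1$ and then to exploit the fact that $F := F^r$ acts on $H$ as an additive, $\mathbb{F}_{p^r}$-linear bijection that is semilinear over $L$ with respect to the $p^r$-power map. First I would observe that $H^{(F^r)}$ is automatically an $\mathbb{F}_{p^r}$-vector space: if $Fh = h$ and $c \in \mathbb{F}_{p^r}$, then $F(ch) = c^{(p^r)} F(h) = c h$, since $c^{(p^r)} = c$ for $c \in \mathbb{F}_{p^r}$; closure under addition is clear. It is also clear that the natural map $H^{(F^r)} \otimes_{\mathbb{F}_{p^r}} L \to H$, $h \otimes \lambda \mapsto \lambda h$, is $L$-linear, so the content of the proposition is that this map is an isomorphism, equivalently that $H^{(F^r)}$ spans $H$ over $L$ and is linearly independent over $\mathbb{F}_{p^r}$ in a way compatible with $L$.

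The key step is \emph{surjectivity}, i.e. that $F^r$-invariants span $H$. Here I would argue as follows. Because the structural morphism $F_L^{r*} H \to H$ is an isomorphism and $F_L^r \colon L \to L$ is faithfully flat (it is injective since $L$ is a field, hence a domain of characteristic $p$), the map $F^r \colon H \to H$ is additive and bijective, and it is $p^r$-semilinear. Pick an $L$-basis $e_1, \dots, e_d$ of $H$ and write $F^r(e_j) = \sum_i a_{ij} e_i$ with $(a_{ij}) \in GL_d(L)$; then solving $F^r(v) = v$ for $v = \sum_j x_j e_j$ amounts to solving the system of ``Artin--Schreier'' equations $\sum_j a_{ij} x_j^{p^r} = x_i$ over $L$. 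Since $L$ is separably closed and this system, regarded as defining a closed subscheme of $\mathbb{A}^d_L$, is finite \'etale over $L$ of degree $p^{rd}$ (the Jacobian of $x \mapsto Ax^{[p^r]} - x$ is $-I$, which is invertible), it has exactly $p^{rd}$ solutions in $L$. Concretely, I would phrase this via the descent-theoretic/linear-algebra statement: a $p^r$-semilinear bijective operator on a finite-dimensional vector space over a separably closed field admits a basis of fixed vectors. One clean way to see the span statement is by induction on $\dim_L H$: choose any nonzero $h \in H$; the elements $h, F^r h, F^{2r} h, \dots$ satisfy a nontrivial $L$-linear relation, from which — after multiplying through suitably and using separable closedness to extract a root of the resulting additive (Artin--Schreier-type) polynomial in one variable — one produces a nonzero $F^r$-fixed vector $w$ in the span of $h, F^r h, \dots$; then pass to $H / L w$, which inherits a unit structure, and conclude by induction that $H^{(F^r)}$ surjects onto $(H/Lw)^{(F^r)}$, lifting a spanning set back to $H$.

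The remaining step is \emph{injectivity} of $H^{(F^r)} \otimes_{\mathbb{F}_{p^r}} L \to H$, equivalently that any $F^r$-fixed vectors that are $\mathbb{F}_{p^r}$-linearly independent remain $L$-linearly independent. Suppose $h_1, \dots, h_m \in H^{(F^r)}$ satisfy a nontrivial $L$-linear relation $\sum \lambda_i h_i = 0$ of minimal length $m$; normalize $\lambda_1 = 1$. Applying $F^r$ gives $\sum \lambda_i^{(p^r)} h_i = 0$, and subtracting yields $\sum_{i \geq 2} (\lambda_i^{(p^r)} - \lambda_i) h_i = 0$, a shorter relation, so by minimality $\lambda_i^{(p^r)} = \lambda_i$ for all $i$, i.e. $\lambda_i \in \mathbb{F}_{p^r}$ — contradicting $\mathbb{F}_{p^r}$-independence. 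Once surjectivity and injectivity are in hand, a dimension count ($\dim_L H = \dim_{\mathbb{F}_{p^r}} H^{(F^r)}$, both finite) confirms the map is an isomorphism. The main obstacle, as usual in Lang's-theorem-type arguments, is the surjectivity/spanning step: making precise that separable closedness of $L$ guarantees enough solutions to the semilinear fixed-point equations. I expect this to be handled either by the explicit finite-\'etale-degree computation above or by citing the standard fact (Lang, or descent for $F^r$-modules) that a unit $L[F^r]$-module over a separably closed $L$ is ``trivial,'' i.e. $\cong (L, F^r)^{\oplus d}$ with $F^r$ acting by the $p^r$-power map on each factor.
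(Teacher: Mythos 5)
The paper does not prove this proposition at all: its ``proof'' is the single sentence ``This is a reformulation of Proposition~1.1 from \cite{katzsga}.'' You have instead supplied a self-contained argument, so the comparison is between your argument and a citation. Your argument is correct and is, in substance, the standard proof of Katz's statement (a Lang-theorem-type argument). The injectivity step --- the minimal-length relation among $F^r$-invariant vectors, killed by applying $F^r$ and subtracting --- is exactly the classical descent lemma and is airtight. The surjectivity step is where the content lies: you give two routes, (a) the finite \'etale cover cut out by the system $\sum_j a_{ij} x_j^{p^r} = x_i$ in $\mathbb{A}^d_L$ (Jacobian $-I$, hence \'etale; invertibility of $A$ lets you rewrite $x_j^{p^r}$ as linear in the $x_i$ and bound the algebra by monomials of multidegree $<p^r$, so it is finite of degree $p^{rd}$), and (b) an inductive reduction producing one fixed vector $w$ at a time, passing to $H/Lw$, and lifting fixed vectors back via a one-variable Artin--Schreier equation. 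Both are correct; (a) requires a small additional remark that the degree is exactly $p^{rd}$ (e.g.\ via the associated graded), and (b) requires making explicit that lifting a fixed vector from $H/Lw$ to $H$ amounts to solving $\mu - \mu^{p^r} = \lambda$, which separable closedness provides. What your proof buys is self-containment; what the paper's citation buys is brevity and deference to the source (Katz) that this paper explicitly models its Frobenius-semisimplicity discussion on.
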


\begin{proof}
This is a reformulation of Proposition~1.1 from
\cite{katzsga}.
\end{proof}

Note that this proposition implies that every 
unit $L[F^r]$-module which is finite-dimensional
over $L$ has an $F^r$-invariant basis.

\subsection{Trivializations of unit $A[F^r]$-modules}

Proposition~\ref{sepclosedfrobenius}
implies trivializations for unit $A[F^r]$-modules
under certain assumptions.
We state several assertions here
for later use.

\begin{proposition}
\label{trivfieldext}
Let $V$ be a finitely-generated unit $K[F^r]$-module,
and let $n$ be the dimension of $V$ as a $K$-vector space.
Let $K^{sep}$ be a separable closure of the field $K$.
Then there exists an isomorphism of left $K^{sep}[F^r]$
modules,
\begin{eqnarray}
\label{trivfieldextiso}
K^{sep} \otimes_K V \cong \left( K^{sep} \right)^{\oplus n}.
\end{eqnarray}
(The left $K^{sep}[F^r]$-module structure for the
vector space on the right is given by the Frobenius
map $F^r_{K^{sep}}$.)
\end{proposition}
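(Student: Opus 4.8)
The plan is to reduce the statement to Proposition~\ref{sepclosedfrobenius} by base-changing from $K$ to a large enough separably closed overfield, then descending back along the separable extension $K^{sep}/K$. First I would base-change $V$ all the way up to $\overline{K^{sep}}$ (or just observe that $K^{sep}$ is already separably closed) and apply Proposition~\ref{findim}: since $V$ is a finitely-generated unit $K[F^r]$-module, $K^{sep}\otimes_K V$ is a finitely-generated unit $K^{sep}[F^r]$-module, hence finite-dimensional over $K^{sep}$, of the same dimension $n$. Now Proposition~\ref{sepclosedfrobenius}, applied with $L=K^{sep}$, says that the $F^r$-invariants $(K^{sep}\otimes_K V)^{(F^r)}$ form an $\mathbb{F}_{p^r}$-vector space whose $K^{sep}$-span is everything, and that the natural map $(K^{sep}\otimes_K V)^{(F^r)}\otimes_{\mathbb{F}_{p^r}}K^{sep}\to K^{sep}\otimes_K V$ is an isomorphism. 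Counting dimensions, $(K^{sep}\otimes_K V)^{(F^r)}$ has $\mathbb{F}_{p^r}$-dimension $n$; picking any $\mathbb{F}_{p^r}$-basis $e_1,\dots,e_n$ of it produces an $F^r$-invariant $K^{sep}$-basis of $K^{sep}\otimes_K V$, i.e.\ precisely the isomorphism of left $K^{sep}[F^r]$-modules
\begin{equation*}
K^{sep}\otimes_K V \;\cong\; (K^{sep})^{\oplus n}
\end{equation*}
asserted in \eqref{trivfieldextiso}: the $i$-th standard basis vector of the right side, which is $F^r$-fixed, maps to $e_i$.

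The one subtlety is that Proposition~\ref{sepclosedfrobenius} literally asks for $L$ \emph{separably closed}, and if one instead wants to run the argument through an algebraic closure $\overline{K}$ one must check that the invariants live over the separable closure rather than only over $\overline{K}$ — but in fact $K^{sep}$ is itself separably closed (that is its defining property), so Proposition~\ref{sepclosedfrobenius} applies directly with $L = K^{sep}$ and no descent from $\overline{K}$ is needed. This is the step I expect to be the only real point requiring care; everything else is dimension-counting and unwinding definitions. (If one did prefer to work with $\overline{K}$, the descent would go via the observation that the equation $x^{(p^r)} = x$ is separable, so all its solutions in $\overline{K}\otimes_K V$ already lie in $K^{sep}\otimes_K V$; but invoking separable closedness of $K^{sep}$ directly is cleaner.)

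Finally I would note that the left $K^{sep}[F^r]$-module structure on $(K^{sep})^{\oplus n}$ named in the statement — the one given by the coordinatewise action of $F^r_{K^{sep}}$ — is exactly the structure for which the standard basis vectors are $F^r$-invariant, so the isomorphism constructed above is an isomorphism of left $K^{sep}[F^r]$-modules and not merely of $K^{sep}$-vector spaces. That completes the proof.
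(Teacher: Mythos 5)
Your proof is correct and follows essentially the same route as the paper: base-change to $K^{sep}$, note the result is a finite-dimensional unit $K^{sep}[F^r]$-module, and apply Proposition~\ref{sepclosedfrobenius} with $L=K^{sep}$ to produce an $F^r$-invariant basis, which gives the isomorphism. The side discussion about descent from $\overline{K}$ is unnecessary but does no harm, since (as you observe) $K^{sep}$ is already separably closed and Proposition~\ref{sepclosedfrobenius} applies directly.
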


\begin{proof}
The module $K^{sep} \otimes_K V$ is a unit $K^{sep}[F^r]$-module
which is finite-dimensional over $K^{sep}$.  By Proposition~\ref{sepclosedfrobenius}
it has $K^{sep}$-basis which is $F^r$-invariant.  This basis
determines the isomorphism.
\end{proof}

\begin{corollary}
\label{trivfieldextcor}
For some finite separable field extension $K' / K$,
there exists an isomorphism of left $K'[F^r]$-modules
\begin{eqnarray}
K' \otimes_K V \cong \left( K' \right)^{\oplus n}.
\end{eqnarray}
\end{corollary}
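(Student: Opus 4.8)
The plan is to descend the isomorphism of Proposition~\ref{trivfieldext} from the (typically infinite) separable closure $K^{sep}$ to a finite subextension. First I would invoke Proposition~\ref{trivfieldext} to obtain an isomorphism $\psi \colon K^{sep} \otimes_K V \xrightarrow{\sim} (K^{sep})^{\oplus n}$ of left $K^{sep}[F^r]$-modules. Fixing a $K$-basis $v_1, \ldots, v_n$ of $V$, the images $\psi(1 \otimes v_i)$ are $n$ vectors in $(K^{sep})^{\oplus n}$; only finitely many elements of $K^{sep}$ occur among their coordinates. Let $K'$ be the subfield of $K^{sep}$ generated over $K$ by this finite set of elements, so that $K'/K$ is a finite separable extension and $\psi$ restricts to a $K'$-linear map $\psi' \colon K' \otimes_K V \to (K')^{\oplus n}$.

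Next I would check that $\psi'$ is an isomorphism of left $K'[F^r]$-modules. It is $K'$-linear by construction, and it is bijective because $\psi$ is: indeed, $\psi'$ becomes $\psi$ after applying the faithfully flat base change $K^{sep} \otimes_{K'} (-)$, so injectivity and surjectivity of $\psi'$ follow from those of $\psi$ (alternatively, $\psi'$ is a $K'$-linear map between $n$-dimensional $K'$-vector spaces whose determinant is the unit $\det\psi$). Finally, $\psi'$ is $F^r$-equivariant: the $F^r$-action on $K' \otimes_K V$ is the restriction of that on $K^{sep} \otimes_K V$, and the target $(K')^{\oplus n}$ carries the Frobenius $F^r_{K'}$, which is likewise the restriction of $F^r_{K^{sep}}$ on $(K^{sep})^{\oplus n}$; so compatibility for $\psi$ gives compatibility for $\psi'$. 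This establishes the claimed isomorphism $K' \otimes_K V \cong (K')^{\oplus n}$ of left $K'[F^r]$-modules.

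The only point requiring any care is that the subfield $K'$ one extracts this way is genuinely separable over $K$ — but this is automatic, since $K'$ is a subextension of the separable extension $K^{sep}/K$, and any subextension of a separable algebraic extension is separable. Thus there is really no serious obstacle here; the corollary is a routine finiteness (``spreading out'') argument applied to the isomorphism already produced in Proposition~\ref{trivfieldext}, and I expect the write-up to be only a few lines.
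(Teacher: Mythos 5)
Your proof is correct and is essentially the same spreading-out argument the paper uses. The paper's proof phrases it by choosing $K'$ large enough that the $F^r$-invariant basis $B$ from Proposition~\ref{trivfieldext} lies inside $K' \otimes_K V$, whereas you choose $K'$ to contain the coordinates of $\psi(1\otimes v_i)$ for a $K$-basis $\{v_i\}$ of $V$; these conditions are equivalent (both say the matrix of $\psi$ has entries in $K'$), and in both versions one then observes that the restricted map remains a left $K'[F^r]$-module isomorphism.
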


\begin{proof}
Let $B \subseteq K^{sep} \otimes_K V$
be the basis which determines isomorphism (\ref{trivfieldextiso}).
Simply choose $K' \subseteq K^{sep}$ large enough that
$K' \otimes_K V \subseteq K^{sep} \otimes_K V$ contains $B$.
\end{proof}

\begin{proposition}
\label{trivsubmod}
Let $W$ be a torsion-free unit $A[F^r]$-module such $K \otimes_A W$
is isomorphic to $K^{\oplus n}$ as a left $A[F^r]$-module.  Then
there exists an isomorphism
\begin{eqnarray}
\label{trivsubmodiso}
W \cong K^{\oplus m} \oplus A^{\oplus n-m}
\end{eqnarray}
with $0 \leq m \leq n$.
\end{proposition}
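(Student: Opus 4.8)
The plan is to analyze $W$ through its structural isomorphism $F^{r*}_A W \xrightarrow{\sim} W$, which since $A$ is a discrete valuation ring (a Henselian DVR with uniformizer $t$) gives strong control. First I would pick a root $W_0 \subseteq W$ for $W$ (which exists by the proposition cited as Theorem~6.1.3 of \cite{emertonandkisin}, since $\Spec A$ is smooth one-dimensional); $W_0$ is a finitely generated torsion-free $A$-module, hence free, say $W_0 \cong A^{\oplus n}$ (its rank equals the generic rank $n$ of $W$ because $K \otimes_A W \cong K^{\oplus n}$). By Proposition~\ref{rootfiltration} the images $W_j$ of $F^{rj}(W_0)$ form an ascending filtration of $W$ with $F^{rj*}_A W_0 \xrightarrow{\sim} W_j$, and $\bigcup_j W_j = W$. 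Each $W_j$ is free of rank $n$ over $A$, and $W_0 \subseteq W_1 \subseteq W_2 \subseteq \cdots$.

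The key computation is to understand the inclusion $W_0 \hookrightarrow W_1$. Since both are free of rank $n$ over the DVR $A$, by the theory of modules over a PID (elementary divisors) there is an $A$-basis $e_1, \dots, e_n$ of $W_1$ and nonnegative integers $a_1 \leq a_2 \leq \cdots \leq a_n$ so that $t^{a_1} e_1, \dots, t^{a_n} e_n$ is an $A$-basis of $W_0$. I would then chase how the structural map iterates: $W_{j+1}$ is generated over $A$ by $F^r(W_j)$, and $F^r$ is Frobenius-semilinear, so $F^r(t^a m) = t^{pa} F^r(m)$. Tracking this, the "gap" between $W_j$ and $W_{j+1}$ at the level of elementary divisors grows by a factor of $p^r$ at each stage in the exponents. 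Concretely, I expect to show that in a suitable basis, $W_j = \bigoplus_i t^{-b_i(j)} A \cdot f_i$ inside $W$ for some fixed $f_i$, where for each $i$ the sequence $b_i(j)$ is either eventually unbounded (growing like a geometric progression with ratio $p^r$) or eventually constant/zero. Taking the union over $j$: the $i$ for which $b_i(j) \to \infty$ contribute a copy of $\bigcup_j t^{-b_i(j)} A = K \cdot f_i$, and the $i$ for which $b_i(j)$ stabilizes contribute a copy of $t^{-c_i} A \cong A$. Reindexing and letting $m$ be the number of indices of the first type gives $W \cong K^{\oplus m} \oplus A^{\oplus n-m}$ with $0 \leq m \leq n$.

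The main obstacle I anticipate is the bookkeeping needed to choose a \emph{single} $A$-basis simultaneously adapted to the entire filtration $(W_j)$ — a priori the elementary-divisor basis for $W_0 \subseteq W_1$ need not be compatible with the one for $W_1 \subseteq W_2$. The way around this is to exploit that all the inclusions are governed by one map, the $r$-fold iterate of the structural isomorphism, so that $W_{j+1}$ is literally the $A$-span of $F^r(W_j)$; I would argue inductively that one can diagonalize everything at once, essentially because $F^r$ applied to a basis in which $W_0 \subseteq W_1$ is diagonal produces, after clearing denominators, a basis in which $W_1 \subseteq W_2$ is diagonal with the exponents multiplied by $p^r$. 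An alternative, perhaps cleaner, route is to invoke Proposition~\ref{trivfieldext} and Corollary~\ref{trivfieldextcor} to reduce to a split situation after a finite separable base change and descend, or to apply Proposition~\ref{sepclosedfrobenius} directly to $W \otimes_A k$ (the special fiber) to get an $F^r$-invariant basis of the residue vector space and lift it by Henselian-ness of $A$; a lifted $F^r$-invariant set would generate a free sub-$A[F^r]$-module isomorphic to $A^{\oplus n}$, and the cokernel should be a $K$-vector space of dimension $m$, which one then splits off. Either way, once the filtration is diagonalized the decomposition (\ref{trivsubmodiso}) falls out directly.
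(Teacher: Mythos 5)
Your proposal takes a genuinely different route from the paper, and the obstacle you flag in your own last paragraph is a real gap, not merely a bookkeeping nuisance. You want a single $A$-basis $\{f_i\}$ with $W_j = \bigoplus_i t^{-b_i(j)} A f_i$ for all $j$, but the suggested fix (apply $F^r$ to a Smith-normal-form basis of $W_0 \subseteq W_1$) does not produce one. If $\{e_i\}$ is a basis of $W_1$ with $W_0 = \bigoplus_i A\, t^{a_i} e_i$, then $\{F^r(e_i)\}$ is a basis of $W_2$ in which $W_1 = \bigoplus_i A\, t^{p^r a_i} F^r(e_i)$ is diagonal; but $\{F^r(e_i)\}$ is a \emph{different} basis of $W_1$ than $\{e_i\}$, and the change-of-basis matrix between them need not be diagonal, so there is in general no single basis diagonalizing $W_0 \subset W_1 \subset W_2$ simultaneously, let alone the whole tower. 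Without that, the quantities $b_i(j)$ are not even well defined, and the passage to the union $W = \bigcup_j W_j$ does not yield the direct-sum decomposition. Your fallback sketch via $W \otimes_A k$ also needs more care: the $F^r$-fixed lift only produces a free submodule of rank $\dim_k(W/tW)$, and showing that the remainder of $W$ is a $K$-vector space that splits off is essentially the content of the proposition itself (and of Proposition~\ref{trivquot}, which the paper proves \emph{using} \ref{trivsubmod}).

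The paper's proof avoids roots and filtrations entirely. The hypothesis that $K \otimes_A W \cong K^{\oplus n}$ as a left $A[F^r]$-module hands you an $F^r$-\emph{invariant} $K$-basis $\{w_1,\ldots,w_n\}$ of $K \otimes_A W$. One first shows each $w_i$ already lies in $W$: if $N>0$ were minimal with $t^N w_i \in W$, the unit identity $W = A \cdot F^r(W)$ would force a contradiction because $F^r$ multiplies $t$-adic valuations by $p^r$. Then one passes to the $\mathbb{F}_{p^r}$-span $V$ of $\{w_i\}$ inside $W$, chooses an $\mathbb{F}_{p^r}$-basis $\{w'_1,\ldots,w'_m\}$ of $V \cap tW$, and extends it to a basis $\{w'_1,\ldots,w'_n\}$ of $V$. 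Each $w'_i$ is again $F^r$-invariant, so $w'_i \in tW$ propagates under $F^r$ to $t^{-p^{rN}} w'_i \in W$ for all $N$, giving $K w'_i \subseteq W$; this basis then realizes $W \cong K^{\oplus m} \oplus A^{\oplus(n-m)}$. The leverage your approach forgoes is precisely the $\mathbb{F}_{p^r}$-linear structure on $F^r$-invariant elements inside $W$ itself, which lets one choose a basis adapted to $tW$ in one step rather than trying to diagonalize an infinite ascending chain of free $A$-modules.
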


\begin{proof}
Let us consider $W$ as a submodule of $K \otimes W$.
Let $\left\{ w_1, \ldots, w_n \right\} \subseteq K \otimes W$ be
the basis which determines the isomorphism to $K^{\oplus n}$.  
This basis is $F^r$-invariant.
Note that $t^n w_1 \in W$ for sufficiently large $n$.  I claim
that in fact $n = 0$ is sufficient.  For, suppose not: then
$t^N w_1 \in W$ and $t^{N-1} w_1 \notin W$ for some $N > 0$.
But in this case there can be no way to express $t^N w_1$
as an $A$-linear combination of elements from $F^r ( W )$.
So $W$ could not be a unit $A[F^r]$-module.  Thus $w_1$
(and likewise every other element from $\{ w_i \}$) must
be contained in $W$.

Let $V \subseteq W$ be the $\mathbb{F}_{p^r}$-vector space
spanned by $\left\{ w_1, \ldots, w_n \right\}$.  Choose
an $\mb{F}_{p^r}$-basis $\left\{ w'_1, \ldots, w'_m \right\}$ for 
the subspace $V \cap tW$.  Each element $w'_i$ satisfies
$t^{-1} w'_i \in W$.  Since $W$ is closed under action by Frobenius,
this implies $t^{-p^{rN}} w'_i \in W$ for any $N > 0$.
Extend $\left\{ w'_1, \ldots, w'_m \right\}$ to a basis
$\left\{ w'_1, \ldots, w'_n \right\}$ for the entire
space $V$.  This basis determines isomorphism (\ref{trivsubmodiso}).
\end{proof}

\begin{proposition}
\label{trivringext}
Let $W$ be an object from $\lmod^{fu} ( A[F^r] )$ which
is a torsion-free $A$-module.  Then there exists a finite
integral extension
\begin{eqnarray}
\xymatrix{
A \ar[r] \ar[d] & A' \ar[d] \\
K \ar[r] & K',
}
\end{eqnarray}
and a left $A'[F^r]$-module isomorpism
\begin{eqnarray}
A' \otimes_A W \cong \left( K' \right)^{\oplus m} \oplus
\left( A' \right)^{\oplus n-m}
\end{eqnarray}
with $0 \leq m \leq n$.
\end{proposition}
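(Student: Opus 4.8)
The plan is to combine Corollary~\ref{trivfieldextcor} with Proposition~\ref{trivsubmod}, but with care taken over the integrality of the ring extension. First I would invoke Corollary~\ref{trivfieldextcor} to obtain a finite separable extension $K'/K$ such that $K' \otimes_K (K \otimes_A W) \cong (K')^{\oplus n}$ as a left $K'[F^r]$-module. Let $A'$ be the integral closure of $A$ in $K'$. Since $A$ is a Henselization of $k[t]_{(t)}$, it is an excellent Henselian discrete valuation ring (or at least a Henselian DVR with the relevant finiteness), so $A'$ is again a Henselian DVR, finite as an $A$-module, with fraction field $K'$; this justifies the diagram in the statement. The extension $A \to A'$ is finite and integral, as required.

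Next I would analyze the pullback $A' \otimes_A W$. Because $A'$ is flat over $A$ (both are DVRs with $A'$ torsion-free over $A$), $A' \otimes_A W$ is a torsion-free $A'$-module, and it is an object of $\lmod^{fu}(A'[F^r])$ by the pullback proposition from Section~\ref{ofxmodulesection}. Moreover $K' \otimes_{A'} (A' \otimes_A W) \cong K' \otimes_A W \cong K' \otimes_K (K \otimes_A W) \cong (K')^{\oplus n}$ as a left $K'[F^r]$-module, using the chosen trivialization over $K'$. Thus $A' \otimes_A W$ satisfies exactly the hypotheses of Proposition~\ref{trivsubmod} with $A$ replaced by $A'$ and $K$ by $K'$ (note the proof of that proposition only used that the ground ring is a Henselian DVR of the stated form — more precisely, it used a uniformizer $t$ and closure under Frobenius, which persist over $A'$). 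Applying Proposition~\ref{trivsubmod} then yields the desired isomorphism $A' \otimes_A W \cong (K')^{\oplus m} \oplus (A')^{\oplus n-m}$ with $0 \le m \le n$.

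The main obstacle I anticipate is a bookkeeping point rather than a deep one: ensuring that Proposition~\ref{trivsubmod} genuinely applies over $A'$ and not only over the specific ring $A$. The statement of Proposition~\ref{trivsubmod} is phrased for the fixed ring $A$, so I would either (a) observe that its proof goes through verbatim for any Henselian DVR containing $k$ with a chosen uniformizer — the only properties used being that nonunits are powers of the uniformizer times a unit, and that a unit $A[F^r]$-module is closed under applying $F^r$ — or (b) note that $A'$ is itself isomorphic to a Henselization of $k[t']_{(t')}$ for a suitable coordinate $t'$ (since $k$ is algebraically closed, the residue field of $A'$ is still $k$, and $A'$ is a Henselian DVR essentially of finite type over $k$), so Proposition~\ref{trivsubmod} applies directly after this identification. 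Option (b) is cleanest and is the route I would take, perhaps with a one-line remark justifying the identification. A secondary minor point is confirming that the $K'$-module structure in Corollary~\ref{trivfieldextcor} matches the one induced by $F^r_{K'}$ after base change, which is immediate from how pullbacks of unit modules are defined.
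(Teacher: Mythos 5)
Your proposal is correct and follows essentially the same route as the paper: choose $K'/K$ via Corollary~\ref{trivfieldextcor}, take $A'$ to be the integral closure of $A$ in $K'$, note that $A'\cong A$ as Henselian DVRs (your option (b)), and apply Proposition~\ref{trivsubmod} to $A'\otimes_A W$. The paper states the identification $A'\cong A$ and the translated version of Proposition~\ref{trivsubmod} more tersely, but the underlying argument is the same.
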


\begin{proof}
Choose a field extension $K' / K$ according to
Corollary~\ref{trivfieldextcor} so that $K' \otimes_A W$
is isomorphic to $\left( K' \right)^{\oplus n}$ for 
some $n$.  Let $A'$ be the integral closure of $A$
inside of $K'$.  Note that the Heneselian DVRs
$A$ and $A'$ are in
fact isomorphic.  Thus Proposition~\ref{trivsubmod}
can be translated into a statement about
modules over $A'$:
\begin{itemize}
\item If $W'$ is any torsion-free
unit $A'[F^r]$-module such that $K' \otimes W'
\cong \left( K' \right)^{\oplus n}$, then
$W' \cong {K'}^{\oplus m} \oplus {A'}^{\oplus n-m}$
for some $m$.
\end{itemize}
The proposition follows once we let $W' = A' \otimes_A W$.
\end{proof}

\begin{proposition}
\label{Amodtriv}
Let $W$ be a free finite-rank $A$-module which has a unit
$A[F^r]$-module structure.  Then $W$ is isomorphic
as a left $A[F^r]$-module to $A^{\oplus n}$ for some $n$.
\end{proposition}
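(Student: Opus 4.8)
The plan is to reduce to Proposition~\ref{sepclosedfrobenius} applied over the residue field $k$, and then lift an $F^r$-invariant basis from the special fiber back to $W$ using the fact that $A$ is Henselian and $W$ is free. First I would pass to the fiber $W/tW$. Since $W$ is a free $A$-module of rank $n$ with a Frobenius-linear endomorphism, and the structural map $F^{r*}_A W \to W$ is an isomorphism, reducing modulo $t$ yields a unit $k[F^r]$-module $\overline{W} = W/tW$ which is $n$-dimensional over $k$. (One must check that the unit condition descends: tensoring the isomorphism $F^{r*}_A W \to W$ with $A/tA = k$ over $A$ gives the structural map of $\overline{W}$, using that $F^r_A$ sends $t \mapsto t^{p^r}$ and that reduction mod $t$ factors through $A/t^{p^r}A$.) By Proposition~\ref{sepclosedfrobenius}, $\overline{W}$ has a basis $\overline{e}_1, \dots, \overline{e}_n$ consisting of $F^r$-invariant elements.

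The main step is then to lift each $\overline{e}_i$ to an actual $F^r$-invariant element $e_i \in W$. Fix a lift $w \in W$ of $\overline{e}_i$; I want to find $w' \in W$ with $w' \equiv w \pmod{tW}$ and $F^r(w') = w'$. Writing $w' = w + tv$ for unknown $v \in W$ and using $F^r(w + tv) = F^r(w) + t^{p^r} F^r(v)$, the equation $F^r(w') = w'$ becomes $F^r(w) - w = tv - t^{p^r} F^r(v)$, i.e.\ we must solve $tv - t^{p^r}F^r(v) = F^r(w) - w$, where the right-hand side lies in $tW$ since $\overline{e}_i$ is $F^r$-invariant. This is a ``Frobenius-linear'' equation that I would solve by successive approximation: the operator $v \mapsto tv - t^{p^r}F^r(v)$ on $tW$ is, modulo $t^{2}W$, just multiplication by $t$ (an isomorphism from $tW$ onto... — more carefully, from $W$ onto $tW$), and the correction term $t^{p^r}F^r(v)$ is of strictly higher order, so one builds the solution $t$-adically and invokes $t$-adic completeness (or equivalently Henselianness together with the completion $k[[t]]$, noting an $F^r$-invariant solution over $k[[t]]$ that is algebraic over $k(t)$ already lies in $A$). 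Once all $e_1, \dots, e_n$ are constructed, they reduce to a basis mod $t$, hence by Nakayama (or freeness plus the determinant being a unit) they form an $A$-basis of $W$; being $F^r$-invariant, they exhibit the isomorphism $W \cong A^{\oplus n}$ of left $A[F^r]$-modules.

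The hard part will be making the successive-approximation argument clean, since it is a Frobenius-\emph{semilinear} fixed-point problem rather than a linear one: one has to track that the iteration converges in the $t$-adic topology and that the limit is genuinely in $A$ (not merely in $k[[t]]$). Alternatively—and perhaps more in the spirit of the preceding propositions—one could avoid the explicit iteration by tensoring up to $k[[t]]$, where the statement becomes the classical fact that a unit $k[[t]][F^r]$-module which is finite free over $k[[t]]$ is trivial (this is essentially in \cite{katzsga}), and then descending: the $F^r$-invariants of $k[[t]] \otimes_A W$ form an $\mathbb{F}_{p^r}$-vector space of dimension $n$, and one checks that these invariants actually lie in $A \otimes_A W = W$ by an argument like the one in the proof of Proposition~\ref{trivsubmod} (an invariant element with a pole would obstruct the unit property). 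I would present the proof via this descent route, citing Proposition~\ref{sepclosedfrobenius} and the structure of $A$ as a Henselian DVR, and relegate the convergence bookkeeping to a brief remark.
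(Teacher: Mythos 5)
Your opening is exactly the paper's: pass to $\overline{W} = W/tW$, invoke Proposition~\ref{sepclosedfrobenius} to obtain an $F^r$-invariant basis, and aim to lift that basis to $F^r$-invariant elements of $W$, finishing with Nakayama. The gap is in the lifting step. Successive approximation converges in the $t$-adic completion $\widehat{A}=k[[t]]$, not in $A$ ($A$ is Henselian but not complete), so you produce an invariant element of $k[[t]]\otimes_A W$ and still owe a descent argument. The descent you gesture at does not work as stated: the argument in Proposition~\ref{trivsubmod} concerns pole orders of invariant elements in $K\otimes_A W$ and uses that negative powers of $t$ propagate under $F^r$, which says nothing about whether a power series in $k[[t]]$ actually lies in $A$. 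Likewise, asserting the solution is ``algebraic over $k(t)$'' is not automatic from the iteration; it needs a reason.

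The missing idea—which is the heart of the paper's lemma—is to observe that the $F^r$-invariance condition, written in coordinates $w' = \sum a_k w_k$ with $w_i = \sum_j c_{ij} F^r(w_j)$, is the system $a_k^{p^r} = \sum_\ell a_\ell c_{\ell k}$. This presents the invariant vectors as the $A$-points of the $A$-algebra
\[
R = A[X_1,\ldots,X_n]\big/\bigl(\{\,X_k^{p^r}-\textstyle\sum_\ell X_\ell c_{\ell k}\,\}_{k=1}^n\bigr),
\]
which is finite, flat, and unramified (the Jacobian criterion: the $X_k^{p^r}$ terms contribute nothing in characteristic $p$ and $(c_{\ell k})$ is invertible), hence \'etale over $A$. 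Since $A$ is Henselian local with algebraically closed residue field, $R\cong A^{\oplus m}$, so every $k$-point of $\Spec R$ lifts uniquely to an $A$-point; equivalently, every $F^r$-invariant element of $W/tW$ lifts uniquely to an $F^r$-invariant element of $W$, with no completion required. This also resolves your descent worry for free: a $k[[t]]$-point of $\Spec R$ factors through an $A$-point because $R$ is already split over $A$. Without identifying the \'etaleness of $R$ (or some equivalent lifting principle), neither of your two routes closes the argument.
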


\begin{proof}
Consider the quotient $W / tW$, which is a finite-dimensional
unit $k[F^r]$-module.  By Proposition~\ref{sepclosedfrobenius},
this vector space has an $F^r$-invariant basis.  To prove
Proposition~\ref{Amodtriv}, it suffices to show that that this
basis can be lifted to an $F^r$-invariant $A$-module basis
for $W$.  This is accomplished by the following lemma:

\begin{lemma}
Any $F^r$-invariant element of $W / tW$ can be uniquely lifted
to an $F^r$-invariant element of $W$.
\end{lemma}

\begin{proof}
The lemma may be formulated in terms of commutative algebra.
Let $\left\{ w_1, \ldots, w_n \right\}$ be any $A$-module basis
for $W$.  Since
$W$ is a unit $A[F^r]$-module, the set $\left\{ F^r (w_1) , 
\ldots , F^r ( w_n ) \right\}$ is another basis, and
there exists an invertible $A$-matrix $\left( c_{ij} \right)$
such that
\begin{eqnarray}
w_i = \sum_{j=1}^n c_{ij} F^r ( w_j ).
\end{eqnarray}
An element
\begin{eqnarray}
\sum_{k=1}^n a_k w_k \in W \hskip0.3in (a_k \in A)
\end{eqnarray}
is $F^r$-invariant if and only if
\begin{eqnarray}
\label{freeeqns}
\sum_{k=1}^n a_k^{p^r} F^r (w_k ) = 
\sum_{k=1}^n a_k w_k  = 
\sum_{k=1}^n a_k \sum_{j=1}^n c_{kj} F^r ( w_j ),
\end{eqnarray}
or equivalently,
\begin{eqnarray}
a_k^{p^r} = \sum_{\ell = 1}^n a_\ell c_{\ell k}
\end{eqnarray}
for each $k = 1, 2, \ldots, n$.
Let
\begin{eqnarray}
R = A[X_1, \ldots, X_n]/ \left( \left\{ X_k^{p^r} -
\sum_{\ell=1}^n X_\ell c_{\ell k} \right\}_{k=1}^n \right).
\end{eqnarray}
Then $F^r$-invariant elements of $W$ may be specified
by $A$-homomorphisms from $R$ into $A$, while $F^r$-invariant
elements of $W / tW$ may be specified by $A$-homomorphisms
from $R$ into $k$.  The claim made in the lemma, then,
is equivalent to the assertion that every element
of $\Hom_A ( R, k )$ can be lifted to an element
of $\Hom_A ( R, A )$.

This assertion becomes evident once we understand
the structure of $R$.  The extension
$A \to R$ is finite, flat, and unramified (as the
reader may check), and therefore \'etale.  Since $A$
is a Henselian local ring, $R$ is simply a finite direct
sum of copies of $A$.
\end{proof}

Now we may complete the proof of Proposition~\ref{Amodtriv}.
Choose an $F^r$-invariant $k$-basis for $W / tW$.  There
is a unique $F^r$-invariant lifting of this set to $W$,
and by Nakayama's lemma this lifting is an $A$-module basis.
\end{proof}

\subsection{The structure of a unit $A[F^r]$-module}

\label{unitstruct}

If $W$ is a torsion-free unit $A[F^r]$-module, let
\begin{eqnarray}
W^{vec} = \bigcap_{n=0}^\infty t^n W \subseteq W.
\end{eqnarray}
The set $W^{vec}$ is the largest $K$-vector
space contained inside of $W$.  (This definition 
extends naturally to modules over finite integral
extensions $A \hookrightarrow A'$ as well.)
It is easily checked that $W^{vec}$ is stabilized 
by the action of $F^r$.  Thus there is an exact sequence
of left $A[F^r]$-modules,
\begin{eqnarray}
0 \to W^{vec} \to W \to W / W^{vec} \to 0.
\end{eqnarray}
This exact sequence will be the basis for the
proof of the local Riemann-Hilbert correspondence.

The reader may verify
the following elementary assertions for any
torsion-free unit $A[F^r]$-module $W$:
\begin{enumerate}
\item The quotient $W / W^{vec}$ inherits the
structure of a unit $A[F^r]$-module, and $W^{vec}$
inherits the structure of a unit $K[F^r]$-module.

\item If $V$ is any $K$-vector subspace of $W^{vec}$,
then $\left( W / V \right)^{vec} = W^{vec} / V$.

\item If $A \hookrightarrow A'$ is any 
finite integral extension, then $\left( A' \otimes_A
W \right)^{vec} = A' \otimes_A W^{vec}$. 
\end{enumerate}

\begin{proposition}
\label{trivquot}
Let $W$ be an object from $\lmod^{fu} ( A[F^r] )$ which
is a torsion-free $A$-module.  Then $W^{vec}$ is a 
finite-dimensional $K$-vector space, and $W / W^{vec}$
is isomorphic as a left $A[F^r]$-module to $A^{\oplus n}$
for some $n$.
\end{proposition}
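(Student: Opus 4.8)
The plan is to combine the trivialization results already established (Propositions~\ref{trivringext} and \ref{Amodtriv}) with the three elementary properties of the functor $W \mapsto W^{vec}$ listed just above. First I would verify that $W^{vec}$ is a finite-dimensional $K$-vector space: by Proposition~\ref{trivringext} there is a finite integral extension $A \hookrightarrow A'$ (with fraction field $K'$) and a left $A'[F^r]$-module isomorphism $A' \otimes_A W \cong (K')^{\oplus m} \oplus (A')^{\oplus n-m}$. Applying property (3) of the list, $(A' \otimes_A W)^{vec} = A' \otimes_A W^{vec}$, and on the other hand the $vec$-part of the right-hand side is visibly $(K')^{\oplus m}$. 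Hence $A' \otimes_A W^{vec} \cong (K')^{\oplus m}$ is finite-dimensional over $K'$, and since $A'$ is faithfully flat over $A$ (it is a finite integral extension of DVRs, in fact isomorphic to $A$ as noted in the proof of Proposition~\ref{trivringext}), $W^{vec}$ is finite-dimensional over $K$ of dimension $m$.

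Next I would show $W / W^{vec}$ is a free $A$-module of finite rank carrying a unit $A[F^r]$-structure, so that Proposition~\ref{Amodtriv} applies directly. That it inherits a unit $A[F^r]$-module structure is property (1) of the list. Finite generation over $A$: the module $W/W^{vec}$ is finitely generated over $A[F^r]$ (being a quotient of the lfgu module $W$), and after tensoring with $A'$ it becomes $(A')^{\oplus n-m}$, which is $A'$-finite; faithfully flat descent then gives that $W/W^{vec}$ is a finitely generated $A$-module. It remains to see $W/W^{vec}$ is torsion-free over $A$, for then, being a finitely generated torsion-free module over the DVR $A$, it is free, and Proposition~\ref{Amodtriv} finishes the argument.

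The main obstacle, then, is establishing that $W/W^{vec}$ is $A$-torsion-free --- i.e.\ that $W^{vec}$ really is the \emph{full} preimage of the $t$-torsion, not merely a $K$-subspace. Suppose $\bar{x} \in W/W^{vec}$ satisfies $t\bar{x} = 0$, lifted to $x \in W$ with $tx \in W^{vec}$. Since $W^{vec} = \bigcap_n t^n W$, we have $tx \in t^{n}W$ for all $n$, so $tx = t^{n+1}y_n$ for some $y_n \in W$; as $W$ is torsion-free, $x = t^n y_n$, whence $x \in \bigcap_n t^n W = W^{vec}$ and $\bar{x} = 0$. (This is essentially the observation that $W^{vec}$ is a pure, indeed divisible-by-$t$-in-the-right-sense, submodule; one may alternatively quote property (2) of the list applied to $V = W^{vec}$, which gives $(W/W^{vec})^{vec} = W^{vec}/W^{vec} = 0$, so $W/W^{vec}$ contains no nonzero $K$-subspace --- combined with finite generation over $A$ this again forces torsion-freeness, since any $t$-torsion submodule of a finitely generated $A$-module would, after saturating, produce a nonzero $K$-subspace.) With torsion-freeness in hand the module $W/W^{vec}$ is free of rank $n-m$ over $A$, and Proposition~\ref{Amodtriv} gives the claimed isomorphism with $A^{\oplus (n-m)}$.
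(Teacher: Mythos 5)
Your proposal is correct, and it follows essentially the same route as the paper's own proof: define $U = W/W^{vec}$, show it is a free finite-rank $A$-module with a unit $A[F^r]$-structure, and invoke Proposition~\ref{Amodtriv}. The only real divergences are cosmetic. For finite-dimensionality of $W^{vec}$ the paper simply observes that $W^{vec}$ is a $K$-subspace of $W \otimes_A K$ and invokes Proposition~\ref{findim}, which is a touch more direct than routing through Proposition~\ref{trivringext}; both are fine. For finite generation of $W/W^{vec}$ over $A$, the paper embeds $U \hookrightarrow A' \otimes_A U \cong (A')^{\oplus n}$ and cites noetherianness, which is the same content as your faithfully-flat-descent phrasing. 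One thing you do that the paper does not: you explicitly verify that $W/W^{vec}$ is $A$-torsion-free (the $tx = t^{n+1}y_n$ argument), whereas the paper asserts this without comment. That is a worthwhile detail to record, since torsion-freeness is exactly what lets you upgrade ``finitely generated'' to ``free'' before applying Proposition~\ref{Amodtriv}.
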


\begin{proof}
The $K$-vector space $W \otimes_A K$ is finite-dimensional
(by Proposition~\ref{findim}), and $W^{vec}$ is a $K$-subspace
of $W \otimes_A K$.  The first assertion follows.
Let $U = W / W^{vec}$.  Then $U$ is a torsion-free unit $A[F^r]$-module
such that $U^{vec} = \left\{ 0 \right\}$.  
By Proposition~\ref{Amodtriv}, the proof will be completed if
we can show that $U$ is finitely-generated as an $A$-module.

Choose a finite integral extension $(A', K')$ of $(A, K)$
according to Proposition~\ref{trivringext} so that
\begin{eqnarray}
\label{trivringextiso2}
A' \otimes_A U \cong \left( K' \right)^{\oplus m} \oplus
\left( A' \right)^{\oplus n-m}.
\end{eqnarray}
Since $\left( A' \otimes_A U \right)^{vec} = A' \otimes_A U^{vec}
= \left\{ 0 \right\}$, we must have $m=0$ above.  Isomorphism (\ref{trivringextiso2})
makes $U$ isomorphic to an $A$-submodule of $\left( A' \right)^{\oplus n}$.
Therefore $U$ is a finitely-generated $A$-module.
\end{proof}

The following corollary includes a converse to Proposition~\ref{trivquot}.
The proof is easy and is left to the reader.

\begin{corollary}
\label{fgustruct}
Let $Y$ be a left $A[F^r]$-module.  Then the following conditions
are equivalent:
\begin{enumerate}
\item The module $Y$ is a finitely-generated
unit $A[F^r]$-module that has no $A$-torsion.
\item There exists an exact sequence of left $A[F^r]$-modules,
\begin{eqnarray}
\label{unitexactseq}
0 \to Y' \to Y \to A^{\oplus n} \to 0,
\end{eqnarray}
where $Y'$ is a finitely-generated unit $K[F^r]$-module. $\Box$
\end{enumerate}
\end{corollary}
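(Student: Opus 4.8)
The plan is to prove the two implications separately. The implication $(1) \Rightarrow (2)$ is essentially a repackaging of Proposition~\ref{trivquot}: assuming $(1)$, I would set $Y' = Y^{vec} = \bigcap_{n \geq 0} t^n Y$. By the elementary remarks in Subsection~\ref{unitstruct} the submodule $Y^{vec}$ is $F^r$-stable and carries the structure of a unit $K[F^r]$-module, so $0 \to Y^{vec} \to Y \to Y/Y^{vec} \to 0$ is an exact sequence of left $A[F^r]$-modules. Proposition~\ref{trivquot} says precisely that $Y^{vec}$ is finite-dimensional over $K$ and that $Y/Y^{vec} \cong A^{\oplus n}$ for some $n$, and a finite-dimensional unit $K[F^r]$-module is finitely generated as a $K[F^r]$-module by Proposition~\ref{findim}. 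This produces the sequence~(\ref{unitexactseq}).

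For $(2) \Rightarrow (1)$ I would verify torsion-freeness, the unit property, and finite generation in turn; write $m = \dim_K Y'$. Torsion-freeness is formal: $Y'$ is a $K$-vector space and $A^{\oplus n}$ is free, hence both are torsion-free over the domain $A$, and an extension of torsion-free $A$-modules is torsion-free. For the unit property, $A$ is regular of characteristic $p$, so its Frobenius is flat (as in the proof of Proposition~\ref{rootfiltration}); hence pulling the sequence of $(2)$ back along $F^r$ keeps it exact, and the structural morphisms of $Y'$, $Y$, and $A^{\oplus n}$ assemble into a morphism from the pulled-back sequence to the original one, commutativity being exactly the $\mc{O}_{F^r,A}$-linearity of the maps $Y' \to Y \to A^{\oplus n}$. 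The outer structural morphisms are isomorphisms ($A^{\oplus n}$ is plainly a unit $A[F^r]$-module, and a unit $K[F^r]$-module regarded over $A$ has bijective structural morphism over $A$ as well, which one checks after tensoring with $K$ since source and target are already $K$-vector spaces), so the five lemma makes the structural morphism of $Y$ an isomorphism too.

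The one substantive point is that $Y$ is finitely generated over $A[F^r]$. Tensoring the sequence of $(2)$ with $K$ shows that $K \otimes_A Y$ — which contains $Y$ by torsion-freeness — sits in an exact sequence $0 \to Y' \to K\otimes_A Y \to K^{\oplus n} \to 0$ of unit $K[F^r]$-modules with finite-dimensional ends, so $K\otimes_A Y$ is finite-dimensional over $K$, hence a finitely generated unit $K[F^r]$-module by Proposition~\ref{findim}. Applying Corollary~\ref{trivfieldextcor} to $K\otimes_A Y$ yields a finite separable extension $K'/K$ over which it becomes the trivial unit module $(K')^{\oplus (m+n)}$. Let $A'$ be the integral closure of $A$ in $K'$; as in the proof of Proposition~\ref{trivringext}, $A'$ is a Henselian discrete valuation ring, abstractly isomorphic to $A$, and $A \hookrightarrow A'$ is finite and faithfully flat. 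Then $A' \otimes_A Y$ is a torsion-free unit $A'[F^r]$-module with $K' \otimes_{A'} (A' \otimes_A Y) = K' \otimes_K (K \otimes_A Y) \cong (K')^{\oplus(m+n)}$, so Proposition~\ref{trivsubmod} (transported to $A'$ via $A' \cong A$) gives $A' \otimes_A Y \cong (K')^{\oplus a} \oplus (A')^{\oplus (m+n-a)}$. Each summand is finitely generated over $A'[F^r]$ — each copy of $A'$ by one element, each copy of $K'$ by $t^{-1}$ times a generator of the corresponding $A'$-lattice — so $A' \otimes_A Y$ is finitely generated over $A'[F^r]$. Since $A \to A'$ is faithfully flat this descends: finitely many $A'[F^r]$-generators of $A' \otimes_A Y$ may be taken of the form $1 \otimes y_i$ with $y_i \in Y$, and the cokernel of the resulting map $A[F^r]^{\oplus N} \to Y$ vanishes after $\otimes_A A'$, hence vanishes. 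Therefore $Y$ is a finitely generated unit $A[F^r]$-module, and $(2) \Rightarrow (1)$ is complete.

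I expect the last step — the reduction of finite generation to Proposition~\ref{trivsubmod} over a trivializing extension, together with the faithfully flat descent and the preliminary observation that $K \otimes_A Y$ is finite-dimensional over $K$ — to be the only place needing genuine care; the remaining verifications are direct appeals to the results already established in this section.
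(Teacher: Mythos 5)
Your proof is correct. The paper declares this corollary's proof ``easy and left to the reader,'' so there is no proof of record to compare against; your argument fills in the gap fully. The direction $(1)\Rightarrow(2)$ is, as you note, a direct repackaging of Proposition~\ref{trivquot} and the remarks of Subsection~\ref{unitstruct}. For $(2)\Rightarrow(1)$: torsion-freeness is immediate; the unit property correctly uses flatness of $F^r_A$ plus the observation that for a $K$-module the pullback along $F^r_A$ coincides with the pullback along $F^r_K$ (since $A\otimes_{A,F^r}K\cong K$), so the five lemma applies. The finite-generation argument is the genuinely nontrivial part, and your treatment — pass to $K\otimes_A Y$ and show it is finite-dimensional, trivialize over a finite separable $K'/K$ via Corollary~\ref{trivfieldextcor}, descend to $A'\otimes_A Y$ via Proposition~\ref{trivsubmod} over $A'\cong A$, exhibit explicit $A'[F^r]$-generators, and conclude by faithfully flat descent — is correct and in effect re-runs the trivialization technique of Proposition~\ref{trivringext} without assuming finite generation in advance. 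The one place I would add a sentence is the descent step: since $A\to A'$ is flat, the $A'[F^r]$-submodule of $A'\otimes_A Y$ generated by $\{1\otimes y_i\}$ equals $A'\otimes_A N$ for $N$ the $A[F^r]$-submodule of $Y$ generated by $\{y_i\}$ (using that $F^{rj}(1\otimes y)=1\otimes F^{rj}(y)$), so the cokernel of $A[F^r]^{\oplus N}\to Y$ does base-change to the cokernel over $A'$ and faithful flatness applies as you claim. Also, minor wording: for the generator of a $K'$-summand you want the inverse of a uniformizer of $A'$, which need not be $t^{-1}$ if $K'/K$ is ramified.
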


\subsection{The local Riemann-Hilbert correspondence}

It is helpful at this point to introduce some geometric
notation.  Let $Z = \Spec A$.  Let $s$ be the closed
point of $Z$ (which has residue field $k$),
and let $\eta$ be the generic point of $Z$ (which has
residue field $K$).
Let
\begin{eqnarray}
\overline{\eta} \colon \Spec K^{sep} \to Z
\end{eqnarray}
be a geometric point at $\eta$.  (Here $K^{sep}$ denotes
a separable closure of $K$.)

Let $V$ be a constructible sheaf of $\mb{F}_{p^r}$-vector spaces
on the scheme $\{ \eta \} \subseteq Z$.  Since $V$ is constructible,
its stalk $M_{\overline{\eta}}$ is finite.  Let 
\begin{eqnarray}
\mc{V}' = \sheafHom_{\mb{F}_{p^r}} \left( V , \mc{O}_{\{ \eta \}} \right).
\end{eqnarray}
Galois descent implies that $\mc{V}$' is a quasi-coherent
$\mc{O}_{ \{ \eta \} }$-module.  Moreover, the $r$th Frobenius endomorphism
of $\mc{O}_{ \{ \eta \} }$ determines a left $\mc{O}_{F^r, \{ \eta \}}$-structure
on $\mc{V}'$ which makes $\mc{V}$' a finitely-generated unit $\mc{O}_{ F^r,
\{ \eta \} }$-module.

At the same time, if $\mc{V}$ is a finitely-generated unit $\mc{O}_{ F^r, 
\{ \eta \} }$-module, then
\begin{eqnarray}
V' = \sheafHom_{\mc{O}_{F^r, \{ \eta \}}} \left( \mc{V} , \mc{O}_{ \{ \eta
\} } \right)
\end{eqnarray}
is a sheaf of $\mb{F}_{p^r}$-vector spaces on $\{ \eta \}$.  The stalk
of $V'$ at $\overline{\eta}$ is
\begin{eqnarray}
V'_{\overline{\eta}} =\Hom_{K^{sep}} \left( \mc{V}_{ \overline{\eta} } ,
K^{sep}
\right),
\end{eqnarray}
which is made isomorphic to $\mb{F}_{p^r}^n$ for some $n$ by 
Proposition~\ref{trivfieldext}.  Thus $V'$ is a constructible
$\mb{F}_{p^r}$-\'etale sheaf.  There is a natural double-dual homomorphism
\begin{eqnarray}
\mc{V} \to \sheafHom_{\mb{F}_{p^r}} \left( \sheafHom_{\mc{O}_{F^r, \{ 
\eta \}}} \left( \mc{V} , \mc{O}_{ \{ \eta \} } \right) , 
\mc{O}_{ \{ \eta \} } \right)
\end{eqnarray}
which is easily seen to be an isomorphism by computing stalks
at $\overline{\eta}$.  Likewise, the double-dual homomorphism
\begin{eqnarray}
V \to \sheafHom_{\mc{O}_{F^r, \{ \eta \}}} \left(
\sheafHom_{\mb{F}_{p^r}} \left( V, \mc{O}_{ \{ \eta \} }
\right) , \mc{O}_{ \{ \eta \} } \right)
\end{eqnarray}
is an isomorphism.

Let $\mod^c \left( \{ \eta \} , \mb{F}_{p^r} \right)$ be the
full subcategory of constructible sheaves in $\mod \left( 
\{ \eta \} , \mb{F}_{p^r} \right)$.  The functors
$\sheafHom_{\mb{F}_{p^r}} \left( \cdot , \mc{O}_{ \{ \eta
\} } \right)$ and $\sheafHom_{\mc{O}_{F^r, \{ \eta \}}}
\left( \cdot , \mc{O}_{ \{ \eta \} } \right)$ determine an
equivalence of categories between $\lmod^{fu} \left( \{
\eta \} , \mc{O}_{F^r, \{ \eta \}} \right)$ and
$\mod^c \left( \{ \eta \} , \mb{F}_{p^r} \right)$.  The 
local Riemann-Hilbert correspondence simply extends this 
equivalence to the scheme $Z$.

\begin{theorem}
\label{localRH}
Let $M$ be a constructible $\mb{F}_{p^r}$-\'etale sheaf on $Z$ whose sections
all have open support.  Then the sheaf
\begin{eqnarray}
\mc{M}' = \sheafHom_{\mb{F}_{p^r}} \left( M , \mc{O}_Z \right)
\end{eqnarray}
is a finitely-generated unit $\mc{O}_{F^r, Z}$-module.  The double-dual
homomorphism
\begin{eqnarray}
M \to \sheafHom_{\mc{O}_{F^r, Z}} \left( \mc{M}' , \mc{O}_Z \right)
\end{eqnarray}
is an isomorphism.

Let $\mc{M}$ be a sheaf from $\lmod^{fu} \left( Z, \mc{O}_{F^r, Z} \right)$
which has a torsion-free $\mc{O}_Z$-module structure.
Then the sheaf
\begin{eqnarray}
M' = \sheafHom_{\mc{O}_{F^r, Z}} \left( \mc{M} , \mc{O}_Z \right)
\end{eqnarray}
is a constructible sheaf of $\mb{F}_{p^r}$-vector spaces.  The double-dual
homomorphism
\begin{eqnarray}
\mc{M} \to \sheafHom_{\mc{O}_{F^r, Z}} \left( M' , \mc{O}_Z \right)
\end{eqnarray}
is an isomorphism.
\end{theorem}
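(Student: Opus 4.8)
The plan is to reduce both halves of the statement to assertions about $A[F^r]$-modules and then to build the correspondence from two elementary ``building-block'' cases. Since $A$ is strictly Henselian (its residue field $k$ is algebraically closed), an \'etale sheaf on $Z$ amounts to the data of its generic stalk, its special stalk, and a specialization map, and sheaves in $\lmod^{fu}(Z, \mc{O}_{F^r, Z})$ are the same thing as finitely-generated unit $A[F^r]$-modules; the condition that $M$ have open support means precisely that the specialization map is injective, i.e. that $M$ is a subsheaf of $j_* j^* M$, where $j \colon \{\eta\} \hookrightarrow Z$ and $i \colon \{s\} \hookrightarrow Z$ denote the two inclusions. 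Both functors commute with $j^*$, so the equivalence of categories over $\{\eta\}$ established above already handles the ``generic part'' of everything, and all the content of the theorem is in controlling what happens at the closed point $s$.

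The computational core consists of two building blocks. The first is the pair $\underline{\mb{F}_{p^r}}$ and $\mc{O}_Z$ (the latter with its standard unit structure, which I write $\widetilde{A}$ locally). The inputs are the Artin--Schreier exact sequence of \'etale sheaves $0 \to \underline{\mb{F}_{p^r}} \to \mc{O}_Z \xrightarrow{\, x \mapsto x^{p^r} - x \,} \mc{O}_Z \to 0$ and the two-term free resolution $0 \to \mc{O}_{F^r, Z} \xrightarrow{\, \cdot(F^r - 1) \,} \mc{O}_{F^r, Z} \to \widetilde{A} \to 0$ of $\widetilde A$ as a left $\mc{O}_{F^r, Z}$-module. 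Applying $\sheafHom_{\mc{O}_{F^r, Z}}(-, \mc{O}_Z)$ to the resolution and recognizing the induced endomorphism of $\sheafHom_{\mc{O}_{F^r, Z}}(\mc{O}_{F^r, Z}, \mc{O}_Z) = \mc{O}_Z$ as the Artin--Schreier map, I get $\sheafHom_{\mc{O}_{F^r, Z}}(\widetilde A, \mc{O}_Z) = \underline{\mb{F}_{p^r}}$ and vanishing of all higher $\mc{E}xt$-sheaves; dually $\sheafHom_{\mb{F}_{p^r}}(\underline{\mb{F}_{p^r}}, \mc{O}_Z) = \mc{O}_Z$, and the double-dual morphisms are manifestly isomorphisms. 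I also record, by a local-cohomology computation, that for a finite $\mb{F}_{p^r}$-space $V$ one has $\sheafHom_{\mb{F}_{p^r}}(i_* V, \mc{O}_Z) = 0$ and $\mc{E}xt^1_{\mb{F}_{p^r}}(i_* V, \mc{O}_Z) \cong i_* \Hom_{\mb{F}_{p^r}}(V, K/A)$, higher $\mc{E}xt$-sheaves again vanishing. The second building block is the pair $j_! N$ and $j_* \mc{P}$, where $N$ is a constructible $\mb{F}_{p^r}$-sheaf on $\{\eta\}$ and $\mc{P}$ its $\{\eta\}$-dual: the adjunction $j_! \dashv j^*$ gives $\sheafHom_{\mb{F}_{p^r}}(j_! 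N, \mc{O}_Z) = j_* \mc{P}$ and, since higher direct images of quasi-coherent sheaves along the affine morphism $j$ vanish, no higher $\mc{E}xt$; conversely $\sheafHom_{\mc{O}_{F^r, Z}}(j_* \mc{P}, \mc{O}_Z) = j_! N$, because there is no nonzero $A[F^r]$-homomorphism from a module on which $t$ acts invertibly into $A$, so the special stalk of this dual is zero.

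With these in hand I assemble the general case by d\'evissage. For the ``module to sheaf'' direction, Proposition~\ref{trivquot} writes any torsion-free lfgu $\mc{M}$ as an extension $0 \to j_* \mc{P} \to \mc{M} \to \widetilde{A}^{\oplus m} \to 0$ with $\mc{P}$ a fgu $\mc{O}_{F^r, \{\eta\}}$-module (the $W^{vec}$-part, being a sheaf on which $t$ acts invertibly, is $j_*$ of its restriction); applying $\sheafHom_{\mc{O}_{F^r, Z}}(-, \mc{O}_Z)$ and using the vanishing of $\mc{E}xt^1_{\mc{O}_{F^r, Z}}(\widetilde A^{\oplus m}, \mc{O}_Z)$ exhibits $M' = \sheafHom_{\mc{O}_{F^r, Z}}(\mc{M}, \mc{O}_Z)$ as an extension of a sheaf of the form $j_!(-)$ by the constant sheaf $\underline{\mb{F}_{p^r}}^{\oplus m}$, hence constructible; and applying $\sheafHom_{\mb{F}_{p^r}}(-, \mc{O}_Z)$ to that extension, the five-lemma applied to the resulting map of short exact sequences reduces the double-dual isomorphism $\mc{M} \xrightarrow{\sim} \sheafHom_{\mc{O}_{F^r, Z}}(M', \mc{O}_Z)$ to the two building-block cases and the known $\{\eta\}$-equivalence. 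For the ``sheaf to module'' direction, write $0 \to j_! N \to M \to i_* V \to 0$ with $N = j^* M$ and $V = M_s$; applying $\sheafHom_{\mb{F}_{p^r}}(-, \mc{O}_Z)$ and inserting the $\sheafHom / \mc{E}xt^1$ computations above produces a four-term exact sequence exhibiting $\mc{M}' = \sheafHom_{\mb{F}_{p^r}}(M, \mc{O}_Z)$ as the kernel of a morphism $j_* \mc{P} \to i_* \Hom_{\mb{F}_{p^r}}(V, K/A)$; this kernel is torsion-free over $A$ and, being the kernel of a morphism of unit modules over a regular scheme (on which Frobenius is flat), is itself unit. The double-dual isomorphism here is again handled by the five-lemma.

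The main obstacle is the one remaining point in the ``sheaf to module'' direction: checking that the kernel $\mc{M}'$ is not merely unit and torsion-free but actually locally finitely generated over $\mc{O}_{F^r, Z}$ --- equivalently, via Corollary~\ref{fgustruct}, that $\mc{M}'/(\mc{M}')^{vec}$ is a finitely-generated $\mc{O}_Z$-module. This is exactly where the hypothesis that $M$ has open support is used: the injectivity of the specialization map of $M$ is what prevents $\mc{M}'$ from being ``too large'' near $s$ and pins down the rank of its free quotient. Everything else in the argument is bookkeeping at the closed point together with formal homological algebra.
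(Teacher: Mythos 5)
Your dévissage is genuinely different from the paper's. In the sheaf-to-module direction the paper filters $M$ by the constant subsheaf $M^{con}$ generated by global sections, so that dualizing $0 \to M^{con} \to M \to M/M^{con} \to 0$ immediately produces the sequence $0 \to {\mc{M}'}^{vec} \to \mc{M}' \to \mc{O}_Z^{\oplus d} \to 0$ that Corollary~\ref{fgustruct} needs, with no $\mc{E}xt$ computations. You instead use the standard recollement sequence $0 \to j_! N \to M \to i_* V \to 0$, which forces you to compute $\mc{E}xt^1_{\mb{F}_{p^r}}(i_*V, \mc{O}_Z)$ and present $\mc{M}'$ as a kernel inside $j_*\mc{P}$. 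The two decompositions are in some sense transverse: the paper's puts the constant piece first and the generically supported piece second, and dualizes to exactly the shape Corollary~\ref{fgustruct} recognizes; yours puts the generically supported piece first, so the dualized sequence does not by itself exhibit the free quotient $\mc{M}'/(\mc{M}')^{vec}$. Your Artin--Schreier/free-resolution analysis of the pair $(\underline{\mb{F}_{p^r}}, \mc{O}_Z)$ is a nice and correct building block, and your $\{\eta\}$-side and $j_! \dashv j^*$ arguments are sound.

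However, you flag and then leave open exactly the point that makes the theorem nontrivial: the proof that the kernel $\mc{M}' = \ker\bigl(j_*\mc{P} \to i_*\Hom_{\mb{F}_{p^r}}(V, K/A)\bigr)$ is locally \emph{finitely generated} as an $\mc{O}_{F^r,Z}$-module. You correctly identify that this is where the open-support hypothesis enters, and correctly reduce it (via Corollary~\ref{fgustruct}) to showing $\mc{M}'/(\mc{M}')^{vec}$ is a finitely generated $\mc{O}_Z$-module, but you offer no argument. With your decomposition the whole of $j_*\mc{P}$ is already equal to its own $vec$-part, so one has to separately extract the free quotient of $\mc{M}'$; this is not a formality. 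One way to close the gap would be to observe that since $M$ has open support, $V = M_s$ is identified with the constant subsheaf $M^{con}$, so restriction of homomorphisms to $M^{con}$ gives a surjection $\mc{M}' \to \sheafHom_{\mb{F}_{p^r}}(M^{con}, \mc{O}_Z) \cong \mc{O}_Z^{\oplus \dim V}$, whose kernel is $\sheafHom(M/M^{con}, \mc{O}_Z) = j_*(\text{fgu on }\{\eta\})$ — but at that point you have simply rediscovered the paper's decomposition. As written, the proposal is an attractive alternative route with one genuine hole at the critical step.
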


\begin{proof}
For any torsion-free $\mc{N} \in \Ob \lmod^{fu} \left( Z, \mc{O}_{F^r, Z} \right)$
let $\mc{N}^{vec}$ denote the subsheaf generated by $\Gamma ( 
Z, \mc{N} )^{vec}$ (see Section~\ref{unitstruct}).  For
any $N \in \Ob \mod^c \left( Z , \mb{F}_{p^r} \right)$, let $N^{con}
\subseteq N$ denote the subsheaf generated by the global sections of 
$N$.  The reader may check the following observations:
\begin{enumerate}
\item For any torsion-free finitely-generated unit $\mc{O}_{F^r, Z}$-module 
$\mc{N}$, the sheaf
\begin{eqnarray}
\sheafHom_{\mc{O}_{F^r, Z}} \left( \mc{N} , \mc{O}_Z \right)^{con}
\end{eqnarray}
is the sheaf of $\mc{O}_{F^r, Z}$-homomorphisms from $\mc{N}$ into $\mc{O}_Z$ that
kill $\mc{N}^{vec}$.

\item For any constructible $\mb{F}_{p^r}$-\'etale sheaf $N$ on
$Z$, the sheaf
\begin{eqnarray}
\sheafHom_{\mb{F}_{p^r}} \left( N , \mc{O}_Z \right)^{vec}
\end{eqnarray}
is the sheaf of $\mb{F}_{p^r}$-homomorphisms from $N$ into $\mc{O}_Z$
that kill $N^{con}$.
\end{enumerate}

This symmetry has a number of useful consequences.  The 
sheaf ${\mc{M}'}^{vec}$ is isomorphic to 
\begin{eqnarray}
\sheafHom_{\mc{O}_{F^r, Z}} \left( M / M^{con} , \mc{O}_Z \right).
\end{eqnarray}
The quotient $M / M^{con}$ is simply the pushforward of an
\'etale sheaf on $\{ \eta \}$.  The Riemann-Hilbert correspondence
over $\{ \eta \}$ implies that ${\mc{M}'}^{vec}$ is a finitely-generated
unit $\mc{O}_{F^r, Z}$-module.  Meanwhile, the quotient sheaf
$\mc{M}' / {\mc{M}'}^{vec}$ is isomorphic to
\begin{eqnarray}
\sheafHom_{\mb{F}_{p^r}} \left( M^{con} , \mc{O}_Z \right).
\end{eqnarray}
Since $M^{con}$ is a constant sheaf, this sheaf is simply
isomorphic to $\mc{O}_Z^{\oplus d}$ for some $d$.
Thus there is an exact sequence
\begin{eqnarray}
0 \to {\mc{M}'}^{vec} \to \mc{M}' \to \mc{O}_Z^{\oplus d} \to 0,
\end{eqnarray}
which implies (by Corollary~\ref{fgustruct}) that $\mc{M}'$
is a finitely-generated unit $\mc{O}_{F^r, Z}$-module.

Let
\begin{eqnarray}
M'' = \sheafHom_{\mc{O}_{F^r, Z}} \left(
\mc{M}' , \mc{O}_Z \right)
\end{eqnarray}
be the double-dual of $M$.
The symmetry discussed above makes the sheaf ${M''}^{con}$ naturally
isomorphic to the double-dual of the sheaf $M^{con}$, and makes
$M'' / {M''}^{con}$ naturally isomorphic to the double-dual
of the sheaf $M / M^{con}$.  There
are double-dual maps
\begin{eqnarray}
M^{con} \to {M''}^{con} \textnormal{ and } M / M^{con} \to M'' / {M''}^{con}.
\end{eqnarray}
It is easily seen that $M^{con}$ is isomorphic to
its double-dual.  The same is true for $M / M^{con}$ by the
Riemann-Hilbert correspondence over $\{ \eta \}$. 
Thus in the diagram
\begin{eqnarray}
\xymatrix{
0 \ar[r] & M^{con} \ar[r] \ar[d]  & M \ar[r] \ar[d]
& M / M^{con} \ar[r] \ar[d] & 0 \\
0 \ar[r] & {M''}^{con} \ar[r] & M'' \ar[r]
& M'' / {M''}^{con} \ar[r] & 0,
}
\end{eqnarray}
both of the outside vertical maps are isomorphisms.  The
homomorphism $M \to M''$ must be an isomorphism by the $5$-lemma.

The proof of the second part of Theorem~\ref{localRH} proceeds
similarly.  One needs only the additional fact that
$\mc{M} / \mc{M}^{vec}$ is isomorphic to $\mc{O}_Z^{\oplus e}$
for some $e > 0$.  (This is implied by Proposition~\ref{trivquot}.)
\end{proof}

\subsection{Roots of unit $A[F^r]$-modules}

\label{localrootssubsection}

We revert to algebraic notation.  Let $W$ be a finitely-generated
unit $A[F^r]$-module which has no $A$-torsion.
Then an $A$-submodule $W_0 \subseteq W$ is a root if:
\begin{enumerate}
\item $W_0$ is a finitely-generated $A$-module,
\item \label{rootprop} the $A$-submodule generated by $F^r ( W_0 ) \subseteq W$ contains $W_0$, and
\item $W$ is generated as a left $A[F^r]$-module by $W_0$.
\end{enumerate}
As in Proposition~\ref{rootfiltration}, a root determines a filtration
\begin{eqnarray}
W_0 \subseteq W_1 \subseteq W_2 \subseteq \ldots
\end{eqnarray}
for $W$, in which $W_i$ is the $A$-submodule of $W$ generated by
$F^{ri} ( W_0 )$.  Each of these modules has
an isomorphism $W_i \cong F^{ri*}_A W_0$ given by
the structural morphism of $W$.  (Here $F^{ri*}_A W_0$ denotes
the tensor product $A \otimes_A W_0$ taken via $F^{ri}_A \colon A \to A$.)
Each module $W_i$ is a free $A$-module with rank equal to $\dim_K K
\otimes_A W$.

Our goal in this subsection is to establish a useful exact sequence that
involves the dual of a root.

Suppose that $W_0$ is a root
for $W$.  Let $\{ w_1, \ldots, w_n \}$ be an $A$-module
basis for $W_0$.  By property (\ref{rootprop}) above,
each element of the basis may be expressed (uniquely, in fact)
as
\begin{eqnarray}
w_i = \sum_{j=1}^n a_{ij} F^r ( w_j),
\end{eqnarray}
with $a_{ij} \in A$.

Let
\begin{eqnarray}
W_0^\vee = \Hom_A ( W_0, A)
\end{eqnarray}
be the $A$-module dual of $W_0$.  The module $W_0^\vee$ has
a canonical left $A$-module structure: if $\phi \colon W \to A$ is
any $A$-module homomorphism, we define $F^r ( \phi )$ to be the composition
of the diagram
\begin{eqnarray}
\xymatrix{ W_0 \ar[r] & W_1 \ar[r]^\cong &
F^{r*}_A W_0 \ar[rrr]^{(a \otimes w) \mapsto a F^r ( \phi ( w ) )}
& & & A. }
\end{eqnarray}
The left $A[F^r]$-module structure of $W_0^\vee$ may also be understood in terms
of the basis $\{ w_1, \ldots, w_n \}$ chosen above.  Let $\{ w_1^\vee ,
\ldots , w_n^\vee \}$ be the dual of this basis.  Then (as the reader may check),
\begin{eqnarray}
F^r ( w_i^\vee ) = \sum_{j=1}^n a_{ji} w_j^\vee.
\end{eqnarray}

Suppose that $\psi$ is an element of $W_0^\vee$ which is invariant
under the action of $F^r$.  Then, the composite map
\begin{eqnarray}
\xymatrix{ W_1 \ar[r]^\cong &
F^{r*}_A W_0 \ar[rrr]^{(a \otimes w) \mapsto a F^r ( \psi ( w ) )}
& & & A }
\end{eqnarray}
is compatible with the map $\psi \colon W_0 \to A$ itself.  In fact,
there is a sequence of induced maps
\begin{eqnarray}
\xymatrix{ W_i \ar[r]^\cong &
F^{ri*}_A W_0 \ar[r]
& A. }
\end{eqnarray}
(for $i = 1, 2, \ldots$) all of which are compatible via restriction.  
Taken together, these maps determine a left $A[F^r]$-module homomorphism
from $W$ into $A$.  In this way we see that the $F^r$-invariant elements
of $W_0^\vee$ are precisely the elements that arise by restriction
of maps from $\Hom_{A[F^r]} \left( W, A \right)$.

\begin{proposition}
\label{invariantsandroots}
Let $W$ be a finitely-generated unit $A[F^r]$-module that has no $A$-torsion.
Suppose that $W_0 \subseteq W$ is a root of $W$.  Then restriction determines
a bijection
\begin{eqnarray}
\Hom_{A[F^r]} \left( W, A \right) \to \left( W_0^\vee \right)^{F^r}.
\end{eqnarray}
\end{proposition}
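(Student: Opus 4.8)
The plan is to realise the restriction map through the filtration $W_0 \subseteq W_1 \subseteq W_2 \subseteq \cdots$ attached to the root, using the structural isomorphisms $W_i \cong F^{ri*}_A W_0$ and the explicit description of the $A[F^r]$-action on $W_0^\vee$. In fact the discussion preceding the statement already exhibits, for each $F^r$-invariant $\psi \in W_0^\vee$, a compatible family of maps $W_i \to A$ that glue to an element of $\Hom_{A[F^r]}(W,A)$ restricting to $\psi$, so surjectivity is essentially at hand; what remains is to check that restriction really does take values in $\left( W_0^\vee \right)^{F^r}$, that it is injective, and to spell out the glueing. I would organise the argument as: (i) the image of restriction lies in the $F^r$-invariants; (ii) restriction is injective; (iii) restriction is surjective.

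For (i): let $f\colon W\to A$ be $A[F^r]$-linear and let $w\in W_0$. Viewing $w$ inside $W_1\cong F^{r*}_A W_0$, write $w=\sum_j a_j F^r(w_j)$ with the basis $\{w_1,\ldots,w_n\}$ of $W_0$; then $F^r$-linearity of $f$ gives $f(w)=\sum_j a_j F^r(f(w_j))$, which is precisely $(F^r\cdot f|_{W_0})(w)$ according to the rule $F^r(w_i^\vee)=\sum_j a_{ji}w_j^\vee$. Hence $f|_{W_0}$ is fixed by $F^r$. For (ii): since $W$ is generated as a left $A[F^r]$-module by $W_0$ -- equivalently $W=\bigcup_i W_i$ with $W_i$ the $A$-span of $F^{ri}(W_0)$ -- an $A[F^r]$-homomorphism vanishing on $W_0$ vanishes on each $F^{ri}(W_0)$, hence on all of $W$; so $f\mapsto f|_{W_0}$ is injective.

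For (iii): given $\psi\in\left( W_0^\vee \right)^{F^r}$, define $f_i\colon W_i\to A$ as the composite of the isomorphism $W_i\cong F^{ri*}_A W_0$ with the map $a\otimes w\mapsto a\,F^{ri}(\psi(w))$; this is well defined on the tensor product since $aF^{ri}(b)\otimes w$ and $a\otimes bw$ both map to $aF^{ri}(b\psi(w))$. The heart of the matter is the compatibility $f_{i+1}|_{W_i}=f_i$. Writing $x\in W_i$ as $x=\sum_j a_j F^{ri}(w_j)$ and substituting $w_j=\sum_\ell a_{j\ell}F^r(w_\ell)$ exhibits $x$ in $W_{i+1}$ as $\sum_\ell\bigl(\sum_j a_j a_{j\ell}^{p^{ri}}\bigr)F^{r(i+1)}(w_\ell)$, so $f_{i+1}(x)=\sum_\ell\bigl(\sum_j a_j a_{j\ell}^{p^{ri}}\bigr)F^{r(i+1)}(\psi(w_\ell))$; on the other hand $f_i(x)=\sum_j a_j F^{ri}(\psi(w_j))$, and the $F^r$-invariance of $\psi$, unwound through $F^r(w_i^\vee)=\sum_j a_{ji}w_j^\vee$, says exactly that $\psi(w_j)=\sum_\ell a_{j\ell}F^r(\psi(w_\ell))$, so applying $F^{ri}$ turns $f_i(x)$ into the same expression. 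Granting this, the $f_i$ glue to an $A$-linear $f\colon W=\bigcup_i W_i\to A$; that $f$ is also $F^r$-linear follows from the parallel identity $F^r(\sum_j a_j F^{ri}(w_j))=\sum_j a_j^{p^r}F^{r(i+1)}(w_j)$, and $f|_{W_0}=f_0=\psi$ by construction. Thus restriction is surjective, and with (i) and (ii) it is a bijection.

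The main obstacle is the compatibility check $f_{i+1}|_{W_i}=f_i$ in (iii): it is the sole place where the invariance of $\psi$ is actually used, and it requires tracking the transition matrix $(a_{ij})$ together with its iterated Frobenius twists through the computation. Everything else -- the module-theoretic well-definedness of $f_i$, the glueing, the injectivity, and the $F^r$-linearity of the glued map -- is formal.
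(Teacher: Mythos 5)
Your proposal is correct and takes essentially the same route as the paper: the surjectivity argument (constructing the compatible family $f_i \colon W_i \to A$ from an $F^r$-invariant $\psi$ and gluing along $W = \bigcup_i W_i$) is exactly what the discussion preceding the proposition sketches, and your injectivity step is the obvious one the paper leaves implicit because $W_0$ generates $W$. You are somewhat more careful than the paper at the key compatibility $f_{i+1}|_{W_i} = f_i$, which the paper states without computation; your unwinding of the transition matrix $(a_{ij})$ through the Frobenius twists is the correct verification of that claim.
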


This proposition has the following consequence.

\begin{theorem}
\label{localRHalt}
Let $W$ be an object from $\lmod^{fu} \left( A[F^r] \right)$ which is
a torsion-free $A$-module.  Suppose that $W$ has a root $W_0$.  Let
$M = \Hom_{A[F^r]} \left( W, A \right)$.  Then the sequence
\begin{eqnarray}
\label{exactseqroot}
\xymatrix{
0 \ar[r] & M \ar[r] & W_0^\vee \ar[r]^{(1 - F^r)} &
W_0^\vee \ar[r] & 0
}
\end{eqnarray}
is exact.
\end{theorem}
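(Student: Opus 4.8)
The plan is to deduce exactness of the sequence~(\ref{exactseqroot}) from the structural facts already assembled, treating separately the three positions where exactness must be checked. By Proposition~\ref{invariantsandroots}, the kernel of $(1 - F^r)$ acting on $W_0^\vee$ is exactly $(W_0^\vee)^{F^r}$, which is identified with $M = \Hom_{A[F^r]}(W,A)$ via restriction; this simultaneously gives exactness at the first $W_0^\vee$ (the image of $M$ is precisely $\ker(1-F^r)$) and injectivity of $M \to W_0^\vee$ (a homomorphism $W \to A$ that restricts to zero on the generating submodule $W_0$ is zero). So the only real content is surjectivity of $1 - F^r$ on $W_0^\vee$.

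To get surjectivity, first I would reduce to understanding $W_0^\vee$ as a left $A[F^r]$-module. Fixing the basis $\{w_1,\dots,w_n\}$ of $W_0$ and the matrix $(a_{ij})$ with $w_i = \sum_j a_{ij} F^r(w_j)$, the dual basis satisfies $F^r(w_i^\vee) = \sum_j a_{ji} w_j^\vee$, so $W_0^\vee$ is a free rank-$n$ $A$-module whose Frobenius-linear endomorphism is given (up to transpose) by the matrix $(a_{ij})$. The crucial observation is that $(a_{ij})$ need not be invertible, but because $W_0$ is a root of a \emph{unit} module and the filtration isomorphisms $W_i \cong F^{ri*}_A W_0$ hold, the matrix $(a_{ij})$ is at worst degenerate in a controlled way; in particular $W_0^\vee$ need not itself be a unit $A[F^r]$-module, which is exactly why surjectivity is not automatic and must be argued by hand.

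For surjectivity I would argue at the level of the residue field and then lift. Given $\phi \in W_0^\vee$, write $\phi = \sum_k b_k w_k^\vee$; solving $(1-F^r)\psi = \phi$ for $\psi = \sum_k x_k w_k^\vee$ amounts to solving the system $x_k - \sum_\ell a_{\ell k} x_\ell^{p^r} = b_k$ (paralleling equation~(\ref{freeeqns}) in the proof of Proposition~\ref{Amodtriv}, with the roles of linear and $p^r$-power terms mirrored). This is a system of $n$ polynomial equations in $n$ unknowns over $A$; I would form the finite $A$-algebra $R = A[X_1,\dots,X_n]/(\{X_k - \sum_\ell a_{\ell k} X_\ell^{p^r} - b_k\}_k)$ and show that $A \to R$ is finite and that the relative differentials vanish (the Jacobian with respect to the $X_k$ is the identity matrix, since differentiating $X_k$ gives $1$ and differentiating $X_\ell^{p^r}$ gives $0$ in characteristic $p$), so $A \to R$ is \'etale. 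Since $A$ is Henselian local, $R$ is a finite product of copies of $A$ (in particular $R \neq 0$ once one checks the fiber over $k$ is nonempty, which follows because the analogous system over the separably closed residue field $k$ is solvable — here Proposition~\ref{sepclosedfrobenius} or the Lang-isogeny argument enters), and any $A$-point of $R$ provides the desired $\psi$.

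The main obstacle I anticipate is making rigorous the claim that the fiber of $R$ over $k$ is nonempty, i.e.\ that $1 - F^r$ is surjective on $W_0^\vee \otimes_A k$, since $W_0^\vee \otimes_A k$ is a finite-dimensional $k$-vector space with a (possibly non-bijective) Frobenius-linear endomorphism, and surjectivity of $1 - F^r$ on such a space is precisely a Lang-type statement that holds because $k$ is algebraically closed. Once that is in hand, the \'etale-over-Henselian argument finishes surjectivity cleanly, and combined with the first paragraph the full exactness of~(\ref{exactseqroot}) follows. I would take care to note that the whole argument is insensitive to the choice of root, since $M$ is defined intrinsically.
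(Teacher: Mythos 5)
Your proposal follows the paper's proof essentially line for line: you reduce to surjectivity of $1 - F^r$ on $W_0^\vee$ via Proposition~\ref{invariantsandroots}, translate that surjectivity into a system of Artin--Schreier-type equations in a chosen basis for $W_0^\vee$, form the corresponding finitely presented $A$-algebra, verify it is \'etale by the Jacobian (which is the identity since the $p^r$-power terms have zero derivative), and invoke Henselianness of $A$ to produce a section. The only point where you go beyond the paper is in flagging the nonemptiness of the special fiber and proposing to close it with a Lang-isogeny argument over the residue field $k$; the paper's own phrase ``thus sections $S \to A$ clearly exist'' passes over this silently, so your extra care there is warranted, not a deviation.
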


\begin{proof}
We need only to show that the action of $1 - F^r$ on $W_0^\vee$ is
surjective.  This is accomplished by the following lemma.

\begin{lemma}
\label{surjlemma}
For any $v \in W_0^\vee$, there exists an element $v' \in
W_0^\vee$ such that $v' - F^r ( v' ) = v$.
\end{lemma}

\begin{proof}
Using the notation from earlier in this subsection, we may write
\begin{eqnarray}
v = \sum_{i=1}^n b_i w^\vee_i
\end{eqnarray}
with $b_i \in A$.
Solving the equation $v' - F^r ( v' ) = v$ amounts to finding elements
$b'_1 , \ldots, b'_n \in A$ such that
\begin{eqnarray}
\sum_{i=1}^n b'_i w_i^\vee - F^r \left( \sum_{i=1}^n 
b'_i w_i^\vee \right) = \sum_{i=1}^n b_i w_i^\vee,
\end{eqnarray}
which is equivalent to solving the system of equations
\begin{eqnarray}
b'_i - \sum_{j=1}^n (b_j')^{p^r} a_{ji} =
b_i
\end{eqnarray}
($i = 1, 2, \ldots, n$).  This in turn is equivalent
to finding a homomorphism of the $A$-algebra
\begin{eqnarray}
S = A[Y_1, \ldots, Y_n] / \left(
\left\{ Y_i - \sum_{j=1}^n Y_j^{p^r} a_{ji} 
- b_i \right\}_{i=1}^n
\right)
\end{eqnarray}
into $A$.  By calculating the module of relative differentials
of $S$ over $A$ one sees that $S$ is a finite \'etale $A$-algebra,
and is therefore simply isomorphic to a direct sum of copies of $A$. 
Thus sections $S \to A$ clearly exist, and the lemma is proved.
\end{proof}

Proposition~\ref{invariantsandroots} and Lemma~\ref{surjlemma}
together prove Theorem~\ref{localRHalt}.
\end{proof}

The case of Theorem~\ref{localRHalt} that is of interest
is when $W$ and $M$ are stalks of sheaves related by the
Riemann-Hilbert correspondence.  This case will come up in 
Section~\ref{RHcurvesection}.

\subsection{The minimal root index}

The concept of a root leads to the definition of an invariant
which measures the complexity of objects from $\lmod^{fu} \left(
A [F^r] \right)$.  The basis for the invariant is the following theorem, which
is a special case of a result of M.~Blickle:

\begin{theorem}
Let $W$ be finitely-generated unit $A[F^r]$-module.  Then $W$
has a unique root $W_0$ which is contained in every other root.
\end{theorem}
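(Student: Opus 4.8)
The strategy is to produce a candidate for the minimal root as an intersection of roots and then show that this intersection is again a root. The key structural fact available is Corollary~\ref{fgustruct} (and Proposition~\ref{trivquot}): there is an exact sequence $0 \to W^{vec} \to W \to A^{\oplus n} \to 0$ of left $A[F^r]$-modules, with $W^{vec}$ a finitely-generated unit $K[F^r]$-module. I would first reduce the problem using this sequence. Over the field $K$, a root of the finite-dimensional unit $K[F^r]$-module $W^{vec}$ is any $F^r$-generating $K$-subspace; since $W^{vec}$ is unit and finite-dimensional, $W^{vec}$ itself is the unique (in fact only) finitely-generated root, so the ``vector part'' contributes nothing problematic. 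The real content is the $A$-lattice behavior, so the heart of the argument concerns how roots sit inside $W$ relative to the filtration $W_0 \subseteq W_1 \subseteq \cdots$ from the local analogue of Proposition~\ref{rootfiltration}.

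**Main steps.** \emph{Step 1: Any root is a lattice of the expected rank.} From the local discussion preceding Proposition~\ref{invariantsandroots}, every root $W_0$ is a free $A$-module of rank $d = \dim_K(K \otimes_A W)$, and the structural map gives isomorphisms $W_i \cong F^{ri*}_A W_0$. So every root, localized at $\eta$, spans the same $K$-vector space $K \otimes_A W$. \emph{Step 2: Roots are closed under intersection.} Given two roots $W_0, W_0'$, set $W_0'' = W_0 \cap W_0'$. This is a finitely-generated $A$-module (submodule of a free module over a Noetherian ring), so condition (1) holds. For condition (2), I would use that $F^r$ applied to an intersection — combined with the fact that on each $W_i$ the map to $W_{i+1}$ is an isomorphism after Frobenius pullback, so $F^r$ is compatible with the submodule lattice in a flat-base-change sense — shows $F^r(W_0'')$ generates a module containing $W_0''$; here Frobenius flatness (the Frobenius on the regular ring $A$ is flat) is what lets intersection commute with the relevant pullback operation. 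For condition (3), I would need that $W_0''$ still generates $W$ as an $A[F^r]$-module: using Proposition~\ref{densecoherent}'s local analogue, both $W_0$ and $W_0'$ agree with $W$ away from the closed point, so their intersection does too, and then an ascending-filtration argument (the images $F^{rn}(W_0'')$ eventually exhaust $W$) finishes it. \emph{Step 3: Descending chain condition.} To get a \emph{unique minimal} root contained in \emph{every} root, I would show the intersection of \emph{all} roots is still a root. Since every root is an $A$-lattice of rank $d$ in $K \otimes_A W$ and contains a fixed root's image under enough Frobenius twists, the set of roots has a lower bound: one shows any descending chain of roots stabilizes, e.g.\ by bounding the ``denominators'' (the $t$-adic distance between a root and a fixed reference root is bounded because condition (2) forces $F^r(W_0) \supseteq$-generates $W_0$, preventing roots from shrinking arbitrarily). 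Then $W_{\min} = \bigcap \{\text{roots}\}$ is a finite intersection of roots, hence a root by Step 2, and by construction it is contained in every root.

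**Anticipated obstacle.** The main difficulty is Step 3: controlling the \emph{family} of all roots well enough to conclude that their intersection is itself a root (finitely generated, and still $A[F^r]$-generating). Verifying condition (2) for an arbitrary intersection is delicate because $F^r$ is only Frobenius-linear, not $A$-linear, so one cannot naively say $F^r(\bigcap W_0^{(\alpha)}) = \bigcap F^r(W_0^{(\alpha)})$; the fix is to pass through the $\mc{O}_X$-linear structural morphism $F^{r*}_A W \to W$ and use flatness of Frobenius over the regular ring $A$ so that pullback commutes with intersections of submodules. Establishing the boundedness that makes the family of roots satisfy a descending chain condition — so that the full intersection reduces to a finite one — is the other place where care is required, and I would base it on the fact that condition (2) pins down how small a root can be relative to any fixed reference root.
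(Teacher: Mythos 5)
First, an important caveat: the paper does not prove this theorem at all. Its entire proof reads ``See Theorem~2.10 from \cite{blickleintersection},'' so there is no internal argument to compare your sketch against. Your proposal is an attempt at a direct proof, and the skeleton you choose --- show roots are closed under pairwise intersection, then get a minimum via a chain condition --- is indeed the natural and correct strategy (and is close in spirit to what Blickle does). I'll therefore assess the sketch on its own terms, assuming throughout, as you implicitly do in Step~1, that $W$ is torsion-free (which is the only case the paper actually uses, via Definition~\ref{minimalrootindexdef}).

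Two places need real work beyond what you wrote. The first is condition~(3) in Step~2. Having observed that $W_0'' = W_0 \cap W_0'$ has full generic rank, you assert that ``an ascending-filtration argument finishes it,'' but the needed argument is not a routine observation. What one should do: let $W''$ be the $A[F^r]$-submodule of $W$ generated by $W_0''$. Flatness of $F^r_A$ (your correct mechanism for condition~(2)) also shows that the structural isomorphism of $W$ restricts to an isomorphism $F^{r*}_A W'' \to W''$, so $W''$ is unit; then by the snake lemma $W/W''$ is also a finitely-generated unit $A[F^r]$-module, and it is $A$-torsion because $W_0''$ has full generic rank. A finitely-generated torsion unit $A[F^r]$-module is a finite-dimensional $k$-vector space $V$ with an isomorphism $F^{r*}_A V \cong V$; since $\dim_k F^{r*}_A V = p^r \dim_k V$, this forces $V = 0$. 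That vanishing is what makes $W'' = W$, and it is the step your sketch leaves implicit.

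The second gap is the boundedness claim in Step~3, where your justification (``condition~(2) forces $F^r(W_0)$ to $\supseteq$-generate $W_0$, preventing roots from shrinking arbitrarily'') gestures at the right phenomenon but does not produce a bound. The precise mechanism is a dimension count of exactly the kind the paper uses later in the proof of Theorem~\ref{mainthm}. For a root $V$ set $V^{(1)} = \langle F^r(V)\rangle_A \cong F^{r*}_A V$ and $\delta(V) = \dim_k\bigl(V^{(1)}/V\bigr)$, a non-negative integer. If $V' \subseteq V$ are roots, then comparing the two filtrations of $V^{(1)}/V'$ and using $\dim_k F^{r*}_A(V/V') = p^r \dim_k(V/V')$ yields the identity
\begin{eqnarray*}
\delta(V) - \delta(V') = (p^r - 1)\,\dim_k\bigl(V/V'\bigr),
\end{eqnarray*}
which in particular gives $\dim_k(V/V') \leq \delta(V)/(p^r-1)$. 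This is the quantitative statement behind ``preventing roots from shrinking arbitrarily.'' With it in hand you can also dispense with chains entirely: since $\delta$ takes non-negative integer values, choose a root $V_{\min}$ minimizing $\delta$; for any other root $V'$, Step~2 makes $V_{\min}\cap V'$ a root, the displayed identity gives $\delta(V_{\min}\cap V') \leq \delta(V_{\min})$, minimality forces equality, hence $V_{\min}/\bigl(V_{\min}\cap V'\bigr) = 0$, i.e.\ $V_{\min} \subseteq V'$. This is both shorter and avoids the directed-intersection bookkeeping you anticipated as an obstacle. With these two patches your proposed route goes through.
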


\begin{proof}
See Theorem 2.10 from \cite{blickleintersection}.
\end{proof}

\begin{definition}
\label{minimalrootindexdef}
Let $W$ be an object from $\lmod^{fu} \left( A [F^r] \right)$
which is a torsion-free $A$-module.  Let $W_0$ be
the minimal root in $W$, and let $W_1 \subseteq W$ be
the $A$-submodule generated by $F^r ( W_0 )$.  The
\textnormal{minimal root index} of $W$ is
\begin{eqnarray}
\frac{\dim_k \left( W_1 / W_0 \right)}{p^r - 1}.
\end{eqnarray}
Let $M$ be a constructible $\mb{F}_{p^r}$-\'etale sheaf on $\Spec A$.
Then the minimal root index of $M$ is the minimal root
index of
\begin{eqnarray}
\Hom_{\mb{F}_{p^r}} \left( M , \mc{O}_{\Spec A} \right)
\end{eqnarray}
(the dual of $M$ under the Riemann-Hilbert correspondence).
\end{definition}

Note that the minimal root index is always finite.  (Since
$W_1$ and $W_0$ have the same rank as $A$-modules, $W_1 / W_0$
is a torsion $A$-module, and is therefore finite-dimensional
over $k$.)  However it is not necessarily
integral, as the following example calculation shows.

\begin{example}
\label{quadraticexample}
Let $W$ be a $K$-vector space generated by a single element $e$,
and define the left $A[F^r]$-module structure on $W$ by
\begin{eqnarray}
F^r ( e ) = t^{(p^r - 1)/2} e.
\end{eqnarray}
Finitely-generated $A$-submodules of $W$ are all of the
form $A(t^N e)$, with $N \in \mathbb{Z}$.  The smallest such module
which satisfies the root properties is $A(t^{-1} e)$.  In this
case
\begin{eqnarray}
W_0 = A ( t^{-1} e ) \textit{ and } W_1 = A ( t^{(-p^r - 1)/2} e).
\end{eqnarray}
The $k$-dimension of $W_1 / W_0$ is $(p^r - 1)/2$, and the minimal
root index of $W$ is $\frac{1}{2}$.

Note that in this case, the Riemann-Hilbert dual of $W$ is an
$\mathbb{F}_{p^r}$-\'etale sheaf on $\Spec A$ of generic rank $1$.
The dual of $W$ is a functor which associates
to any \'etale ring extension $A \to B$ the vector space 
$\Hom_{B[F^r]} \left( B \otimes W , B \right)$.  This sheaf has
nontrivial sections, for example, for the $k$-algebra homomorphism
$A \to K$ which maps $t$ to $t^2$.
\end{example}

The next proposition follows easily from Proposition~\ref{Amodtriv}.
The proof is left to the reader.

\begin{proposition}
\label{indexzero}
Let $W$ be an object from $\lmod^{fu} \left( A[F^r] \right)$
which is a free finite-rank $A$-module.  Then the minimal root
for $W$ is $W$ itself.  The minimal root index for $W$ is zero.
\end{proposition}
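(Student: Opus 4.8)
The plan is to reduce to the standard module using Proposition~\ref{Amodtriv} and then run a one-step determinant computation over the discrete valuation ring $A$.

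First I would invoke Proposition~\ref{Amodtriv} to assume $W = A^{\oplus n}$ with $F^r$ acting on the standard basis by $F^r(\sum_i a_i e_i) = \sum_i a_i^{p^r} e_i$. With this normalization $W$ is visibly a root of itself: it is finitely generated over $A$, it generates itself over $A[F^r]$, and the $A$-span of $F^r(W)$ already contains each $e_i = F^r(e_i)$. Hence $W$ has at least one root, and it suffices to show that \emph{every} root $W_0 \subseteq W$ equals $W$; for then the unique minimal root is $W$, the submodule $W_1$ generated by $F^r(W_0) = F^r(W)$ is again $W$, and so $\dim_k(W_1/W_0) = 0$ and the minimal root index is $0$.

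So let $W_0 \subseteq W$ be an arbitrary root. Being finitely generated and torsion-free over the DVR $A$, it is free, and it has rank $n$: by Proposition~\ref{rootfiltration} the terms $W_i \cong F^{ri*}_A W_0$ of the associated filtration are all free of the same rank and their union is $W$, forcing that rank to be $n$. Fix an $A$-basis $w_1, \dots, w_n$ of $W_0$ and write $w_j = \sum_i T_{ij} e_i$ with $T \in M_n(A)$, $\det T \neq 0$; write $v$ for the valuation of $A$, so $v(\det T) \geq 0$. Since $F^r$ acts entrywise by $p^r$-th powers, $W_1$ is spanned over $A$ by the columns of the matrix $T^{[p^r]} := (T_{ij}^{p^r})$. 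The crucial input is the characteristic-$p$ identity $\det(T^{[p^r]}) = (\det T)^{p^r}$, which holds because the Leibniz expansion of the determinant has coefficients $\pm 1$ (fixed by the $p^r$-th power map) and because $x \mapsto x^{p^r}$ is additive in characteristic $p$; therefore $v(\det T^{[p^r]}) = p^r\, v(\det T)$.

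To conclude, the root property $W_0 \subseteq W_1$ and the standard formula relating the length of a quotient of full-rank $A$-lattices to the valuation of the transition determinant give $\dim_k(W_1/W_0) = \mathrm{length}_A(W_1/W_0) = v(\det T) - v(\det T^{[p^r]}) = (1 - p^r)\, v(\det T)$. As this is a nonnegative integer and $v(\det T) \geq 0$ while $1 - p^r < 0$, we must have $v(\det T) = 0$, i.e.\ $T \in \mathrm{GL}_n(A)$, so $W_0 = W$; consequently $W_1 = W$ as well, $\dim_k(W_1/W_0) = 0$, and the minimal root index is $0$. The step I expect to need the most care is this last bookkeeping --- identifying $\dim_k(W_1/W_0)$ with $v(\det T) - v(\det T^{[p^r]})$ --- which is the familiar statement that for full-rank lattices $L \subseteq L'$ over a DVR one has $\mathrm{length}_A(L'/L) = v(\det U)$ for any transition matrix $U$; one can also see it by diagonalizing $T$ via Smith normal form over $A$. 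Everything else is immediate from Propositions~\ref{Amodtriv} and~\ref{rootfiltration}.
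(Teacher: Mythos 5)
Your proof is correct, and it uses exactly the reduction the paper intends: the author states that the proposition ``follows easily from Proposition~\ref{Amodtriv}'' and leaves the details to the reader, and your argument begins precisely by applying Proposition~\ref{Amodtriv} to normalize $W$ to $A^{\oplus n}$ with the standard Frobenius action. The determinant--valuation computation you supply is a clean way to fill the omitted step. One cosmetic remark: the finishing argument can be phrased without writing down a transition matrix. Since $F^{r*}_A$ is exact and multiplies the length of a finite torsion $A$-module by $p^r$, one has $\mathrm{length}_A(W/W_1) = \mathrm{length}_A\bigl(F^{r*}_A(W/W_0)\bigr) = p^r \cdot \mathrm{length}_A(W/W_0)$; as $W/W_1$ is a quotient of $W/W_0$, this forces $\mathrm{length}_A(W/W_0) = 0$, so $W_0 = W$. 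This is equivalent to your identity $\det\bigl(T^{[p^r]}\bigr) = (\det T)^{p^r}$ but avoids Smith normal form; either version is fine.
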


\section{The Riemann-Hilbert correspondence on a curve}

\label{RHcurvesection}

Throughout this section, let $X$ be a smooth $k$-curve.
The Riemann-Hilbert
correspondence over $X$ relates $\mb{F}_{p^r}$-\'etale sheaves on $X$
to unit $\mc{O}_{F^r, X}$-modules.  Some relationships between
\'etale cohomology and coherent cohomology can be deduced
from the correspondence.  In this section we will develop
the Riemann-Hilbert correspondence over $X$ by building
on the local results from Section~\ref{localsection}.

For any smooth $k$-scheme $Z$, let $\mod^c \left( Z , \mb{F}_{p^r}
\right)$ denote the category of constructible
$\mb{F}_{p^r}$-\'etale sheaves on $Z$.  If $z$ is a closed point
of $Z$, let $\mc{O}_{Z, z}$ denote the stalk of the \'etale
coordinate sheaf of $Z$.  If $Q$ is an \'etale sheaf on $Z$, let
$Q_{(z)}$ denote the pullback of $Q$ via the natural morphism
\begin{eqnarray}
\Spec \mc{O}_{Z, z} \to Z.
\end{eqnarray}

\subsection{The functor $\sheafHom_{\mb{F}_{p^r}} \left( \cdot , 
\mc{O}_X \right)$}

If $M$ is an $\mb{F}_{p^r}$-\'etale sheaf on $X$, then the sheaf
$\sheafHom_{\mb{F}_{p^r}} \left( M , \mc{O}_X \right)$
has a left $\mc{O}_{F^r, X}$-module structure given by the 
left $\mc{O}_{F^r, X}$-module structure of $\mc{O}_X$.
The next proposition shows that the functor $\sheafHom_{\mb{F}_{p^r}}
\left( \cdot , \mc{O}_X \right)$ is compatible with the
analogous functor from the local Riemann-Hilbert correspondence
(see Theorem~\ref{localRH}).

\begin{proposition}
\label{stalkiso1}
Let $M$ be an object of $\mod^c ( X, \mb{F}_{p^r} )$. 
Let $x$ be a closed point of $X$.
The natural homomorphism
\begin{eqnarray}
\sheafHom_{\mb{F}_{p^r}} \left( M , \mc{O}_X \right)_x
\to \Hom_{\mb{F}_{p^r}} \left( M_{(x)} , 
\mc{O}_{\Spec \mc{O}_{X, x}} \right)
\end{eqnarray}
is an isomorphism.
\end{proposition}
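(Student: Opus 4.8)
The plan is to reduce the statement to a computation with the étale stalk at a geometric point lying over $x$, and then to invoke the structural results already established. First I would recall that for a closed point $x$ of the smooth curve $X$, the étale stalk $\mc{O}_{X,x}$ is isomorphic to the Henselian ring $A$ of Section~\ref{localsection}, so that $\Spec \mc{O}_{X,x}$ is precisely the scheme $Z$ appearing in Theorem~\ref{localRH}. Both sides of the claimed isomorphism are sheaves (or modules) that are, by definition or by the formation of $\sheafHom$ in the étale topology, determined by their behaviour on étale neighbourhoods of $x$; so the natural map in question is the comparison map between the stalk of an internal $\sheafHom$ and the internal $\Hom$ formed after pulling back to $\Spec \mc{O}_{X,x}$. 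I would make the map explicit: a germ at $x$ of an $\mb{F}_{p^r}$-linear map $M \to \mc{O}_X$ restricts to an $\mb{F}_{p^r}$-linear map $M_{(x)} \to \mc{O}_{\Spec \mc{O}_{X,x}}$, and the content of the proposition is that every such map, and every relation among such maps, is already ``visible'' at finite level.

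The key step is to check the isomorphism after passing to a geometric point. Let $\overline{x} \colon \Spec \overline{K} \to Z$ be a geometric point of $Z$ lying over the generic point (the analogue of $\overline{\eta}$), or more precisely I would compute stalks of both sheaves at each point of $Z$ — the closed point $s$ and the generic point $\eta$ — using that $\sheafHom$ of constructible sheaves commutes with stalks in this one-dimensional Henselian situation. At the closed point the stalk of $M$ is a finite $\mb{F}_{p^r}$-module and the stalk of $\mc{O}_X$ is $A$ itself (an étale-local ring), so the stalk of $\sheafHom_{\mb{F}_{p^r}}(M,\mc{O}_X)$ at $x$ is $\Hom_{\mb{F}_{p^r}}(M_{(x)},\mc{O}_{\Spec\mc{O}_{X,x}})$ essentially by definition of how the right-hand side is formed; the generic-point comparison is the Galois-descent computation already used in Section~\ref{unitstruct} and in the proof of Theorem~\ref{localRH} (where $\sheafHom_{\mb{F}_{p^r}}(\cdot,\mc{O}_{\{\eta\}})$ was shown to behave well). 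Because $M$ is constructible with all sections having open support, $M_{(x)}$ falls under the hypotheses of Theorem~\ref{localRH}, so $\sheafHom_{\mb{F}_{p^r}}(M_{(x)},\mc{O}_Z)$ is a genuine finitely-generated unit $\mc{O}_{F^r,Z}$-module and its formation is compatible with the exact sequence $0 \to M^{con} \to M \to M/M^{con} \to 0$; I would dualize this sequence and compare, using the five-lemma, exactly as in the proof of Theorem~\ref{localRH}.

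I expect the main obstacle to be the bookkeeping around the $\sheafHom$ sheaf and its stalk: one must be careful that $\sheafHom_{\mb{F}_{p^r}}(M,\mc{O}_X)_x$ — the stalk of the internal sheaf-Hom, which a priori involves sections over all étale neighbourhoods — really does compute the ``honest'' $\Hom$ of the pulled-back objects over $\Spec\mc{O}_{X,x}$, rather than something larger. The standard way around this is to observe that $M$ is locally (on the étale site near $x$) an extension of a constant sheaf by a sheaf pushed forward from the generic point, reducing to those two cases; for a constant sheaf $\sheafHom_{\mb{F}_{p^r}}(\mb{F}_{p^r}^{\oplus d},\mc{O}_X)_x \cong \mc{O}_{X,x}^{\oplus d}$ is immediate, and for $j_*$ of a generic sheaf one uses the already-established equivalence of categories over $\{\eta\}$ together with the fact that $\mc{O}_{X,x}$ is Henselian, so étale neighbourhoods of $x$ have well-controlled structure. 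Assembling these two cases via the five-lemma on the dual of $0 \to M^{con} \to M \to M/M^{con} \to 0$ gives the isomorphism in general.
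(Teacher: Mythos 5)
Your proposal correctly identifies what the proposition is really asserting—that every $\mb{F}_{p^r}$-linear map $M_{(x)} \to \mc{O}_{\Spec \mc{O}_{X,x}}$, and every relation among such maps, is already ``visible'' over some actual \'etale neighbourhood of $x$ in $X$. But the argument you then offer does not deliver this, and has a circularity at its centre.

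The key misstep is the claim that at the closed point $s$ of $Z = \Spec \mc{O}_{X,x}$ the stalk of $\sheafHom_{\mb{F}_{p^r}}(M,\mc{O}_X)$ equals $\Hom_{\mb{F}_{p^r}}(M_{(x)},\mc{O}_Z)$ \emph{essentially by definition}. That is exactly the content of the proposition, not a tautology. A germ at $x$ of an $\mb{F}_{p^r}$-linear map $M\to\mc{O}_X$ is a colimit over \'etale neighbourhoods $V\to X$ of honest homomorphisms $M_{|V}\to\mc{O}_V$; a map $M_{(x)}\to\mc{O}_Z$ lives only over the Henselian limit $Z$. The stalk of an internal $\sheafHom$ is \emph{not} computed from the stalks of the arguments, and you cannot prove the comparison ``by passing to a geometric point'': the stalk at $s$ of the comparison sheaf on $Z$ \emph{is} the statement you are trying to prove, so computing it there is circular. (The stalk at $\eta$ is easy and is indeed handled by Galois descent, but that is the easy half.)

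The devissage you propose has a second problem: the short exact sequence $0\to M^{con}\to M\to M/M^{con}\to 0$ is a construction on $Z$, formed from the sections of the Henselian pullback $M_{(x)}$, and there is no ready-made compatible exact sequence on an actual \'etale neighbourhood $V\to X$ of $x$ to which one could apply $\sheafHom(\cdot,\mc{O}_V)$ and then take a filtered colimit. So the five-lemma on $X$ has nothing to act on. And even granting such a sequence, the piece supported at the generic point is where the real work lies, and ``Henselian structure theory'' as invoked is not an argument.

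What is actually needed—and what the paper supplies—is a concrete finiteness argument. After shrinking $X$ to be affine with $M$ locally constant away from $x$, the paper reduces (Proposition~\ref{stalkiso1specialcase}, via a Galois cover and splitting off the unramified part $W$) to the case of a finite Galois cover $\Spec S \to \Spec R$ totally ramified over $y=x$ and with $M$ constant on its complement. A homomorphism $N_{(y)}\to\mc{O}_{\Spec\mc{O}_{Y,y}}$ is then a pair of $\mb{F}_{p^r}$-linear maps out of the \emph{finite} sets $N(\Spec R)$ and $N(\Spec S')$ into $\mc{O}_{Y,y}$ and $S'\otimes_R\mc{O}_{Y,y}$; finiteness forces the images into some finite \'etale $R$-subalgebra $P\subseteq\mc{O}_{Y,y}$, giving an extension of the morphism over the \'etale neighbourhood $\Spec P$. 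That $P$-subalgebra step is the missing ingredient in your sketch: without it you have no way to descend a map defined over the Henselian limit back to a finite-level \'etale neighbourhood. If you want to salvage the devissage route, you would still need to prove exactly this extension lemma for the generic-support piece, and to set up the devissage over genuine \'etale neighbourhoods rather than over $Z$.
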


We will prove Proposition~\ref{stalkiso1} by reduction
to the following special case.

\begin{proposition}
\label{stalkiso1specialcase}
Let $Y$ be a smooth affine curve over $k$.  Let
$Z \to Y$ be a finite Galois cover which is 
totally ramified at one closed
point $y \in \left| Y \right|$ and unramified elsewhere.
Let $\{ z \}$ be the pre-image of $\{ y \}$ in $Z$, and
let $Z' \subseteq Z$ be the complement of $\{ z \}$.
Let $N$ be a constructible $\mb{F}_{p^r}$-\'etale sheaf
on $Y$ such that $N_{\mid Z'}$ is constant.  Then
the natural homomorphism
\begin{eqnarray}
\label{specialstalkisoformula}
\sheafHom_{\mb{F}_{p^r}} \left( N , \mc{O}_Y \right)_y
\to \Hom_{\mb{F}_{p^r}} \left( N_{(y)} , \mc{O}_{\Spec 
\mc{O}_{Y, y}} \right)
\end{eqnarray}
is an isomorphism.
\end{proposition}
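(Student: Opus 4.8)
The plan is to compute the stalk of the left-hand side of (\ref{specialstalkisoformula}) explicitly, using the hypothesis that $N_{\mid Z'}$ is constant and that $Z \to Y$ is a totally ramified Galois cover, and then match it with the right-hand side by Galois descent. First I would recall that the stalk of $\sheafHom_{\mb{F}_{p^r}}(N, \mc{O}_Y)$ at the geometric point $\bar y$ (indeed at the Henselian local ring) is computed from the sections of $N$ and $\mc{O}_Y$ over \'etale neighborhoods of $y$, and the point is that such \'etale neighborhoods are ``dominated'' in the relevant sense by the cover $Z \to Y$ together with the inclusion $Z' \subseteq Z$. Since $N$ restricted to $Z'$ is constant, sections of $N$ over an \'etale $Y$-scheme are governed by the Galois action of $G = \Aut(Z/Y)$, which is also the decomposition group at $z$ because the cover is totally ramified at $y$. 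So I would identify $\sheafHom_{\mb{F}_{p^r}}(N, \mc{O}_Y)_y$ with the $G$-fixed part of $\Hom_{\mb{F}_{p^r}}(N_{\bar z}, \mc{O}_{Z,z})$, where $N_{\bar z}$ is the (constant, hence well-defined) value of $N$ on $Z'$ and $\mc{O}_{Z,z} \cong A$.

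Next I would do the same computation for the right-hand side. The pullback $N_{(y)}$ is a constructible $\mb{F}_{p^r}$-sheaf on $\Spec \mc{O}_{Y,y}$, and by the same totally-ramified-Galois structure its restriction to the generic point is ``trivialized'' by the induced extension $\Spec \mc{O}_{Z,z} \to \Spec \mc{O}_{Y,y}$, whose Galois group is again $G$. Then $\Hom_{\mb{F}_{p^r}}(N_{(y)}, \mc{O}_{\Spec \mc{O}_{Y,y}})$ is computed by Galois descent from $\Hom_{\mb{F}_{p^r}}(N_{\bar z}, \mc{O}_{\Spec \mc{O}_{Z,z}})$ taken with its $G$-action, i.e.\ again the $G$-fixed subspace of $\Hom_{\mb{F}_{p^r}}(N_{\bar z}, A)$. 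Finally I would check that the natural map in the statement is precisely the identification of these two descriptions of the same $G$-fixed space, so it is an isomorphism. The key formal ingredient throughout is that $\mc{O}_Y$ and $\mc{O}_{\Spec \mc{O}_{Y,y}}$ both satisfy \'etale descent for the cover in question, so $\mathcal{H}om_{\mb{F}_{p^r}}$ into them commutes with passing to $G$-invariants.

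The main obstacle I expect is bookkeeping around the ramified point rather than anything deep: away from $y$ everything is lisse and the stalk comparison is the standard compatibility of $\sheafHom$ with flat (indeed \'etale) base change, so the only place a non-formal argument is needed is at $y$ itself, where the cover is wildly or tamely ramified. There one must be careful that ``$N_{\mid Z'}$ is constant'' really forces $N_{(y)}$ to be the pushforward, from the generic point, of a Galois representation on which $G$ acts through its identification with a quotient of the local Galois group — this is why the hypothesis is stated for $Z'$ (the complement of $z$ in $Z$) and not merely generically. I would make this precise by observing that an \'etale neighborhood of $y$ in $Y$ pulls back, after further \'etale localization, to an \'etale neighborhood of $z$ in $Z$ on which $N$ becomes constant, which lets me reduce the stalk computation over $Y$ to a computation over $Z$ and then invoke Galois descent for the finite group $G$. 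Once the two sides are each written as $\left(\Hom_{\mb{F}_{p^r}}(N_{\bar z}, A)\right)^{G}$, compatibility of the natural map is immediate, and the proposition follows.

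Having established Proposition~\ref{stalkiso1specialcase}, the reduction to prove Proposition~\ref{stalkiso1} in general should go by choosing, for the given closed point $x$, a suitable \'etale-local model: shrink $X$ to an affine neighborhood, and use that any constructible $\mb{F}_{p^r}$-sheaf becomes constant after a finite \'etale (in fact Galois, after enlarging) cover, which can be arranged to be totally ramified over $x$ and unramified elsewhere by a further localization and, if necessary, a base change. Then the special case applies verbatim, and the general statement follows because both sides of the map in Proposition~\ref{stalkiso1} are insensitive to the \'etale localization used to define the model.
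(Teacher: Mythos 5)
Your Galois descent formula does not correctly compute either side of the map (\ref{specialstalkisoformula}). You propose to identify $\sheafHom_{\mb{F}_{p^r}}\left( N , \mc{O}_Y \right)_y$ with the $G$-invariants of $\Hom_{\mb{F}_{p^r}}\left( N_{\bar z} , \mc{O}_{Z,z} \right)$, but this sees only the restriction $N_{\mid Z'}$ and forgets the stalk of $N$ at $y$ itself. Take the simplest instance: let the Galois cover be trivial ($Z = Y$, $G = \{1\}$, $Z' = Y \smallsetminus \{ y \}$) and let $N = j_! \underline{\mb{F}_{p^r}}$ be the extension by zero of the constant sheaf along $j \colon Y \smallsetminus \{ y \} \hookrightarrow Y$; the hypothesis that $N_{\mid Z'}$ is constant holds. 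Then $\sheafHom_{\mb{F}_{p^r}}\left( N , \mc{O}_Y \right) = j_* \mc{O}_{Y \smallsetminus \{ y \}}$, whose stalk at $y$ is the fraction field $K$ of $\mc{O}_{Y,y}$, and one checks that the right-hand side of (\ref{specialstalkisoformula}) is also $K$. Your formula, however, yields $\Hom_{\mb{F}_{p^r}}\left( \mb{F}_{p^r} , \mc{O}_{Z,z} \right)^G = \mc{O}_{Y,y}$, which is strictly smaller. The underlying difficulty is twofold: a morphism $N \to \mc{O}_Y$ near $y$ depends not just on the $G$-representation $N_{\bar z}$ but on how the stalk $N_y$ maps into it; and the covers $Z \to Y$ and $\Spec \mc{O}_{Z,z} \to \Spec \mc{O}_{Y,y}$ are ramified at the point in question, hence are not \'etale covers, so one cannot do \'etale descent along them -- only $Z' \to Y \smallsetminus \{ y \}$ is an \'etale Galois cover, and that sees nothing at $y$.

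The paper's proof avoids any such descent identification. It writes a morphism $N_{\mid \Spec Q} \to \mc{O}_{\Spec Q}$ (or $N_{(y)} \to \mc{O}_{\Spec \mc{O}_{Y,y}}$) explicitly as a commutative square of $\mb{F}_{p^r}$-linear maps, one on $N ( \Spec S' )$ (subject to $\Aut(S'/R)$-equivariance) and one on $N ( \Spec R )$, compatible via the restriction maps. Injectivity of (\ref{specialstalkisoformula}) is then essentially formal, and the substance of the proof is surjectivity: since $N ( \Spec R )$ and $N ( \Spec S' )$ are finite sets, the images of these sets under a given morphism into the filtered colimit $\mc{O}_{Y,y}$ land in some finitely generated \'etale $R$-subalgebra $P \subseteq \mc{O}_{Y,y}$, so the morphism descends to $\Spec P$, an \'etale neighborhood of $y$. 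This finiteness-and-spreading-out argument, not Galois descent, is what makes the surjectivity work, and it is precisely the step your proposal does not address.
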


{\it Proof.} The curves $Y$, $Z$, and $Z'$ are all affine.
Let $Y = \Spec R$, $Z = \Spec S$, and 
$Z' = \Spec S'$.  If $\Spec Q \to \Spec R$ is any \'etale
morphism, then morphisms
\begin{eqnarray}
N_{ \mid \Spec Q } \to \mc{O}_{\Spec Q}
\end{eqnarray}
may be expressed as
commutative diagrams
\begin{eqnarray}
\xymatrix{
N ( \Spec S' ) \ar[r]^{\phi} & S' \otimes_R Q \\
N ( \Spec R ) \ar[u]^{\rho_1} \ar[r]^{\psi} & Q \ar[u]^{\rho_2} \\
}
\end{eqnarray}
in which $\rho_1$ and $\rho_2$ are sheaf restriction maps,
$\phi$ and $\psi$ are $\mb{F}_{p^r}$-linear homomorphisms, and 
$\phi$ is $\Aut ( S' / R )$-equivariant.
Similarly, morphisms
\begin{eqnarray}
N_{(y)} \to \mc{O}_{\Spec \mc{O}_{Y, y}}
\end{eqnarray}
may be expressed as commutative diagrams
\begin{eqnarray}
\xymatrix{
N( \Spec S' ) \ar[r] & S' \otimes_R \mc{O}_{Y, y} \\
N( \Spec R ) \ar[r] \ar[u] & \mc{O}_{Y, y } \ar[u] \\
}
\end{eqnarray}
in which the vertical maps are sheaf restriction maps, the
horizontal maps are $\mb{F}_{p^r}$-linear homomorphisms, and
the top map is $\Aut ( S' / R )$-equivariant.

Suppose that 
\begin{eqnarray}
\label{morphismdiagram}
\xymatrix{
N( \Spec S' ) \ar[r] & S' \otimes_R \mc{O}_{Y, y} \\
N( \Spec R ) \ar[r] \ar[u] & \mc{O}_{Y, y } \ar[u] \\
}
\end{eqnarray}
is the diagram for a morphism $N_{(y)} \to 
\mc{O}_{\Spec \mc{O}_{Y, y}}$.  
Since $N ( \Spec R )$ and $N ( \Spec S' )$ are
finite, there exists an \'etale $R$-algebra
$P \subseteq \mc{O}_{Y, y}$ such that 
the images of $N( \Spec R )$ and $N ( \Spec S' )$
are contained in $P$ and $S' \otimes_R P$, respectively.
Thus (\ref{morphismdiagram}) determines a commutative
diagram
\begin{eqnarray}
\xymatrix{
N ( \Spec S' ) \ar[r] & S' \otimes_R P \\
N ( \Spec R ) \ar[u] \ar[r] & P. \ar[u] \\
}
\end{eqnarray}
We conclude that any $\mb{F}_{p^r}$-linear morphism
$N_{(y)} \to \mc{O}_{\Spec \mc{O}_{Y, y}}$ extends
to an $\mb{F}_{p^r}$-linear morphism from 
$N \to \mc{O}_Y$ on some \'etale neighborhood
of $y$.  We have constructed an inverse to homomorphism 
(\ref{specialstalkisoformula}). $\Box$

{\it Proof of Proposition~\ref{stalkiso1}.}
Replacing $X$ with an open subcurve if necessary,
we may assume that $X$ is affine and that the
constructible sheaf 
$M$ is locally constant on $X \smallsetminus
\{ x \}$.  Let $Z' \to X \smallsetminus \{ x \}$
be a finite Galois \'etale cover such that
$M_{\mid Z'}$ is constant.

Let $K_0 / K(X)$ be the largest subextension
of $K(Z') / K(X)$ that is unramified at $x$.  The
tower of field extensions
\begin{eqnarray}
K(Z') \supseteq K_0 \supseteq K(X)
\end{eqnarray}
determines a diagram of smooth projective curves,
\begin{eqnarray}
\label{diagramofprojcurves}
\xymatrix{\overline{Z'} \ar[r] \ar[rd] &
W \ar[d] \\
& \overline{X} \\
}
\end{eqnarray}
where $\overline{Z'}$ and $\overline{X}$ are the
smooth projective closures of $Z'$ and $X$, respectively,
and $W$ is the unique smooth projective
curve over $k$ whose fraction field is $K_0$.
Let
\begin{eqnarray}
\xymatrix{\overline{Z'} \times_{\overline{X}} X
\ar[r] \ar[rd] &
W \times_{\overline{X}} X \ar[d] \\
& X \\
}
\end{eqnarray}
be the diagram obtained from (\ref{diagramofprojcurves})
via base change.
The morphism
\begin{eqnarray}
\overline{Z'} \times_{\overline{X}} X \to
X
\end{eqnarray}
is Galois and finite, and \'etale away from $x$.
The morphism
\begin{eqnarray}
W \times_{\overline{X}} X \to X
\end{eqnarray}
is Galois and finite and \'etale at all points of $X$.
The morphism
\begin{eqnarray}
\overline{Z'} \times_{\overline{X}} X 
\to W \times_{\overline{X}} X
\end{eqnarray}
is Galois and finite, \'etale away from $x$, 
and totally ramified at $x$.
Proposition~\ref{stalkiso1specialcase} may therefore be
applied with
$Y = W \times_{\overline{X}} X$, $Z = \overline{Z'} \times_{\overline{X}}
X$, and $N = M_{\mid W \times_{\overline{X}} X}$.
This application completes the current proof, since the assertion of
Proposition~\ref{stalkiso1} is local at $x$.  $\Box$

\begin{proposition}
\label{isfiniteunit}
Let $M$ be a constructible $\mb{F}_{p^r}$-\'etale sheaf
on $X$ whose sections all have open support.  Let 
\begin{eqnarray}
\mc{M} = \sheafHom_{\mb{F}_{p^r}} \left( M , \mc{O}_X \right),
\end{eqnarray}
with a left $\mc{O}_{F^r, X}$-module structure given by
the Frobenius endomorphism of $\mc{O}_X$.  Then $\mc{M}$
is an lfgu (locally finitely-generated unit)
$\mc{O}_{F^r, X}$-module, and it is torsion-free
as an $\mc{O}_X$-module.
\end{proposition}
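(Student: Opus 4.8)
The plan is to verify both conclusions locally, since ``lfgu'' and ``torsion-free as an $\mc{O}_X$-module'' are both conditions that may be checked on an affine (or even \'etale-local) neighborhood of each closed point of $X$. First I would observe that $\mc{M}$ is automatically quasi-coherent: away from the finitely many points where $M$ fails to be locally constant, $M$ is locally constant, so $\mc{M}$ is locally free of finite rank there, and near a bad point we reduce to the stalk computation below. The left $\mc{O}_{F^r, X}$-module structure on $\mc{M}$ comes from the $p^r$-th power map on $\mc{O}_X$, exactly as in the setup for Theorem~\ref{localRH}. To check that the structural morphism $F^{r*}_X \mc{M} \to \mc{M}$ is an isomorphism and that $\mc{M}$ is locally finitely generated over $\mc{O}_{F^r, X}$, it suffices to check these on stalks at each closed point $x$, where $\mc{O}_{X, x} \cong A$.

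The key reduction is Proposition~\ref{stalkiso1}: the stalk $\mc{M}_x = \sheafHom_{\mb{F}_{p^r}}(M, \mc{O}_X)_x$ is naturally isomorphic, as an $\mc{O}_{F^r, X}$-module stalk (hence as a left $A[F^r]$-module, since $\mc{O}_{X,x} \cong A$), to $\Hom_{\mb{F}_{p^r}}\bigl(M_{(x)}, \mc{O}_{\Spec \mc{O}_{X, x}}\bigr)$, which is precisely the dual appearing in the first part of Theorem~\ref{localRH} applied to the constructible sheaf $M_{(x)}$ on $Z = \Spec A$. The hypothesis that all sections of $M$ have open support passes to $M_{(x)}$, so Theorem~\ref{localRH} applies and tells us that $\mc{M}_x$ is a finitely-generated unit $A[F^r]$-module. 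Quasi-coherence globally then follows by combining the locally-free behavior on the good open set with these finitely-generated stalks; and the structural morphism, being an isomorphism after passing to every closed-point stalk (and trivially an isomorphism on the locally constant locus), is an isomorphism of $\mc{O}_X$-modules. Finite generation as a left $\mc{O}_{F^r, X}$-module on a neighborhood of each $x$ follows because a finite generating set for the stalk $\mc{M}_x$ over $A[F^r]$ spreads out to a finite generating set over an \'etale neighborhood.

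For torsion-freeness, the point is that $X$ is a smooth curve, so the local ring $\mc{O}_{X,x} \cong A$ is a discrete valuation ring (in fact a Henselian DVR), and a module over a DVR is torsion-free precisely when it embeds in its localization at the generic point. Now $\mc{M}_x = \Hom_{\mb{F}_{p^r}}(M_{(x)}, \mc{O}_{\Spec A})$, and since $\mc{O}_{\Spec A}$ assigns to the \'etale $A$-algebra $A \to B$ the torsion-free $A$-module $B$, any $A$-torsion section of $\mc{M}_x$ would be annihilated by some power of $t$ while taking values in torsion-free modules, forcing it to vanish; so $\mc{M}_x$ is torsion-free over $A$. Torsion-freeness of $\mc{M}$ as an $\mc{O}_X$-module is a stalk-local condition on an integral scheme, so this suffices. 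I expect the main obstacle to be the careful bookkeeping that the isomorphism of Proposition~\ref{stalkiso1} genuinely respects the $F^r$-action (so that ``finitely-generated unit $A[F^r]$-module'' transfers back to a statement about the $\mc{O}_{F^r, X}$-module structure of the stalk), and that finite generation of a single stalk over the twisted polynomial ring spreads out to an honest \'etale neighborhood — both are routine but require attention.
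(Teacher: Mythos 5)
Your overall strategy — reduce to the local picture via Proposition~\ref{stalkiso1} and then invoke Theorem~\ref{localRH} at each stalk — is exactly the paper's strategy for the ``unit'' and ``locally finitely generated'' conclusions, and your torsion-freeness argument (maps into the torsion-free sheaf $\mc{O}_{\Spec A}$ cannot have $t$-torsion) is a fine expansion of the paper's one-line remark. However, there is a genuine gap at the start: you assert quasi-coherence of $\mc{M}$ by saying ``near a bad point we reduce to the stalk computation'' and later that it ``follows by combining the locally-free behavior on the good open set with these finitely-generated stalks.'' That is not an argument. Quasi-coherence of an \'etale sheaf of $\mc{O}_X$-modules is not a stalk-local condition; knowing that $\mc{M}_{(x)}$ is a nice module on $\Spec \mc{O}_{X,x}$ (a pro-\'etale, not Zariski-open, neighborhood) does not by itself produce a Zariski neighborhood of $x$ on which $\mc{M}$ is quasi-coherent. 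Moreover, the subsequent stalk-checking for the structural morphism and for finite generation implicitly presupposes quasi-coherence, so the reasoning is partly circular.

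The paper fills this hole with a separate lemma: letting $j \colon U \hookrightarrow X$ be the locus where $M$ is locally constant, it exhibits a natural injection
\begin{eqnarray}
\mc{M} \hookrightarrow j_*\sheafHom_{\mb{F}_{p^r}}\left(M_{\mid U}, \mc{O}_U\right) \cong \sheafHom_{\mb{F}_{p^r}}\left(M, j_*\mc{O}_U\right),
\end{eqnarray}
whose target is visibly quasi-coherent, and then shows the cokernel is a quasi-coherent skyscraper supported on $X \smallsetminus U$ (using finiteness of $M_x$ to bound pole orders by a power of a local parameter). Quasi-coherence of $\mc{M}$ then follows because it is a kernel of a map of quasi-coherent sheaves. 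You would need an argument of this sort — or some other genuine descent/spreading-out argument — before the stalkwise verifications you outline can be carried out. Once that is supplied, the rest of your proposal matches the paper's proof.
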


{\it Proof.} The proof of this proposition consists of three
lemmas.

\begin{lemma}
The sheaf $\mc{M}$ is a quasi-coherent $\mc{O}_X$-module.
\end{lemma}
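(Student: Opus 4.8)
The goal is to show $\mc{M} = \sheafHom_{\mb{F}_{p^r}}(M, \mc{O}_X)$ is quasi-coherent as an $\mc{O}_X$-module. Quasi-coherence is a local condition on $X$, so it suffices to check it on each affine open, and in fact it suffices to understand the sheaf in an étale neighborhood of each closed point $x$, invoking Proposition~\ref{stalkiso1} to identify the stalk. First I would reduce to the local picture: shrinking $X$, assume $X = \Spec R$ is affine and $M$ is locally constant on $X \smallsetminus \{x\}$, trivialized by a finite Galois étale cover $Z' \to X \smallsetminus \{x\}$. Then, exactly as in the proof of Proposition~\ref{stalkiso1}, build the projective-curve diagram~(\ref{diagramofprojcurves}) and base-change to get a finite Galois cover $Y = W \times_{\overline X} X \to X$, étale everywhere, together with a further cover $Z = \overline{Z'} \times_{\overline X} X \to Y$ that is étale away from $x$ and totally ramified at $x$, such that $M$ pulled back to $Z \smallsetminus \{z\}$ is constant.

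The key step is to write $\mc{M}$ explicitly in this situation. With $Y = \Spec R$, $Z = \Spec S$, $Z' = Z \smallsetminus \{z\} = \Spec S'$, the description of $\mb{F}_{p^r}$-linear morphisms $N_{\mid \Spec Q} \to \mc{O}_{\Spec Q}$ as Galois-equivariant commutative squares (from the proof of Proposition~\ref{stalkiso1specialcase}) shows that for any étale $R$-algebra $Q$, the sections $\mc{M}(\Spec Q)$ are computed as the equalizer of two maps between finite free $Q$-modules — namely pairs $(\psi, \phi)$ with $\psi \colon N(\Spec R) \to Q$ and $\phi \colon N(\Spec S') \to S' \otimes_R Q$ compatible under restriction and $\Aut(S'/R)$-equivariant. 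Since $N(\Spec R)$ and $N(\Spec S')$ are finite sets and $S'$ is a finite $R$-module (away from $x$; one must be slightly careful, but $S'\otimes_R Q$ for $Q$ étale over $R$ is a finite flat $Q$-module after possibly inverting the uniformizer at $x$, and the equivariance conditions cut things down $R$-linearly), the assignment $Q \mapsto \mc{M}(\Spec Q)$ is given by a finite limit of finite free $Q$-modules, hence is the sheaf of sections of a coherent — in particular quasi-coherent — $\mc{O}_X$-module on a neighborhood of $x$. Actually the cleanest route is: descent along the finite étale cover $Y' := W \times_{\overline X} X \to X$ (which is even Galois étale everywhere) reduces quasi-coherence of $\mc{M}$ to quasi-coherence of its pullback $\sheafHom_{\mb{F}_{p^r}}(M_{\mid Y'}, \mc{O}_{Y'})$, by Galois descent for quasi-coherent sheaves; and then one is in the setting where $M_{\mid Y'}$ is trivialized by the totally ramified cover $Z \to Y'$, where the explicit equalizer description applies directly.

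The main obstacle I anticipate is bookkeeping the interaction between the étale-sheaf $\sheafHom$ (which a priori lives on the big étale site and need not obviously be quasi-coherent — that is exactly what we're proving) and the finiteness of $M$'s stalks. The resolution is that, because $M$ has open support for all its sections and is locally constant off $x$, the sheaf $M$ is (étale-locally) the pushforward from $\{\eta\}$ of a constructible sheaf extended by a constant sheaf on the open part, so $\mc{M}$ sits in an exact sequence with $\mc{O}_X^{\oplus d}$ and the dual of something supported generically — both manifestly quasi-coherent — as suggested by the local analysis of Theorem~\ref{localRH} and Corollary~\ref{fgustruct}. I would in fact prefer to argue this way: produce the short exact sequence $0 \to {\mc{M}}^{vec} \to \mc{M} \to \mc{O}_X^{\oplus d} \to 0$ étale-locally near $x$ (with ${\mc{M}}^{vec}$ the pushforward of the dual of $M/M^{con}$ from the generic point, which is quasi-coherent by Galois descent over $\{\eta\}$ as in Theorem~\ref{localRH}), and conclude quasi-coherence of $\mc{M}$ from the two-out-of-three property for quasi-coherence in a short exact sequence. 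Either way, once the stalk identification of Proposition~\ref{stalkiso1} is in hand, no genuinely new difficulty arises beyond translating the local structure theory into a statement about the sheaf on an étale neighborhood.
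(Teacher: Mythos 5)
Your proposal takes a genuinely different route from the paper's and has gaps in both arguments you sketch. Most seriously, your first route concludes that $\mc{M}$ is coherent near $x$, which is false: for $M = j_! \underline{\mathbb{F}_p}^{\oplus 2}$ on $\mathbb{P}^1$ with $j\colon \mathbb{A}^1\smallsetminus\{0\}\hookrightarrow\mathbb{P}^1$ (the first example in the paper's closing subsection), one has $\mc{M} \cong j_* \mc{O}^{\oplus 2}_{\mathbb{A}^1\smallsetminus\{0\}}$, whose stalk at $0$ is $\mc{O}_{X,0}[1/t]^{\oplus 2}$, not finitely generated over $\mc{O}_{X,0}$. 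The source of the failure is precisely the point you acknowledge only parenthetically: $S'$ is \emph{not} finite over $R$ (it is a localization of a finite $R$-algebra at a function vanishing over $x$), so $\Hom_{\mathbb{F}_{p^r}}(N(\Spec S'), S'\otimes_R Q)$ is not a finite $Q$-module, and $G$-equivariance does not restore finiteness. Your equalizer is therefore a diagram involving $Q$-modules whose quasi-coherence is exactly what you are trying to prove, and the argument circles.

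Your second route, via the short exact sequence $0 \to \mc{M}^{vec} \to \mc{M} \to \mc{O}_X^{\oplus d} \to 0$, has a different gap: the trivialization $\mc{M}/\mc{M}^{vec} \cong \mc{O}^{\oplus d}$ comes from Proposition~\ref{Amodtriv} and is a Henselian phenomenon, valid over $\Spec \mc{O}_{X,x}$ but not over any finite-type \'etale neighborhood of $x$. So the exact sequence does not exist \emph{\'etale-locally near $x$} in the sense needed to apply two-out-of-three for quasi-coherence; spreading it out would require controlling $\mc{M}^{vec}$ on a neighborhood, which runs into the same quasi-coherence question you set out to answer. The paper's proof sidesteps all of this structure theory: choose a dense open $U$ on which $M$ is locally constant, so $\sheafHom_{\mathbb{F}_{p^r}}(M|_U, \mc{O}_U)$ is locally free; its pushforward $j_*\sheafHom_{\mathbb{F}_{p^r}}(M|_U,\mc{O}_U)\cong\sheafHom_{\mathbb{F}_{p^r}}(M,j_*\mc{O}_U)$ is quasi-coherent on $X$; and the natural injection $\mc{M}\hookrightarrow\sheafHom_{\mathbb{F}_{p^r}}(M,j_*\mc{O}_U)$ has quasi-coherent skyscraper cokernel because for any local $\phi$ one has $t^n\phi(M_x)\subseteq\mc{O}_{X,x}$ for $n\gg 0$ by finiteness of $M_x$. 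No descent, no local structure theorem, and no claim of coherence near $x$ (which would be wrong). If you want to salvage your approach, aim not for finiteness near $x$ but for an injection of $\mc{M}$ into a visibly quasi-coherent sheaf with a controllable cokernel, which is the paper's strategy.
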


{\it Proof.} Let $U \subseteq X$ be a nonempty open
subset on which $M$ is locally constant.  Let $j
\colon U \to X$ be the inclusion morphism.  The sheaf
\begin{eqnarray}
\sheafHom_{\mb{F}_{p^r}} \left( M_{\mid U} ,
\mc{O}_U \right)
\end{eqnarray}
is locally free of finite rank as an $\mc{O}_U$-module.
The pushforward
\begin{eqnarray}
j_* 
\sheafHom_{\mb{F}_{p^r}} \left( M_{\mid U} ,
\mc{O}_U \right) \cong 
\sheafHom_{\mb{F}_{p^r}} \left( M ,
j_* \mc{O}_U \right)
\end{eqnarray}
is a quasi-coherent $\mc{O}_X$-module.  There is
a natural morphism
\begin{eqnarray}
\label{morphismfromM}
\mc{M} \hookrightarrow \sheafHom_{\mb{F}_{p^r}} \left(
M , j_* \mc{O}_U \right).
\end{eqnarray}
To show that $\mc{M}$ is quasi-coherent, it suffices
to show that the cokernel of this morphism is quasi-coherent.

The image of (\ref{morphismfromM}) consists of the morphisms
$M \to j_* \mc{O}_U$ that map $M_x$ into $\mc{O}_{X, x}$
for each $x \in \left| X \smallsetminus U \right|$.  Suppose
that $x$ is an element of $\left| X \smallsetminus U \right|$,
and suppose that $\phi$ is a morphism from $M$ to
$j_* \mc{O}_U$ over a Zariski open neighborhood of $x$.
Choose a local parameter $t$ at $x$.  Since $M_x$
is finite, we may choose $n$ sufficiently large
so that $t^n \phi$ maps $M_x$ into $\mc{O}_{X, x}$.
We conclude that the cokernel of (\ref{morphismfromM}) is a
quasi-coherent skyscraper sheaf supported at $\left| X \smallsetminus 
U \right|$.  This completes the proof.  $\Box$

\begin{lemma}
The structural morphism
\begin{eqnarray}
F_X^* \mc{M} \to \mc{M}
\end{eqnarray}
is an isomorphism.
\end{lemma}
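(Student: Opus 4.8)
The plan is to deduce the statement from the local Riemann--Hilbert correspondence (Theorem~\ref{localRH}) by reducing to stalks at closed points. Let $U\subseteq X$ be a dense open subscheme on which $M$ is locally constant. On $U$ the sheaf $\mc{M}_{\mid U}=\sheafHom_{\mb{F}_{p^r}}(M_{\mid U},\mc{O}_U)$ is the Riemann--Hilbert dual of a locally constant sheaf, hence a unit $\mc{O}_{F^r,U}$-module: after pulling back along a finite \'etale cover that trivializes $M$ it becomes $\mc{O}^{\oplus n}$ with $F^r$ acting coordinatewise by $p^r$-th powers, whose structural morphism is manifestly an isomorphism, and this descends. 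Since $\mc{M}$ is quasi-coherent (by the previous lemma), so is $F^{r*}_X\mc{M}$, and therefore the kernel and cokernel of the structural morphism $F^{r*}_X\mc{M}\to\mc{M}$ are quasi-coherent sheaves supported on the finite set $\left|X\smallsetminus U\right|$ of closed points. It thus suffices to show that the structural morphism becomes an isomorphism on the (\'etale) stalk at each closed point $x\in\left|X\right|$.

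Fix such an $x$ and set $A=\mc{O}_{X,x}$, which by the discussion opening Section~\ref{localsection} is a Henselization of $k[t]_{(t)}$. As $k$ is algebraically closed, $x$ has residue field $k$ and is fixed by the Frobenius morphism, so the stalk of $F^{r*}_X\mc{M}$ at $x$ is $F^{r*}_A\mc{M}_x$ and the stalk at $x$ of the structural morphism of $\mc{M}$ is the structural morphism of $\mc{M}_x$, regarded as a left $A[F^r]$-module via the $p^r$-th power map on $A$. Proposition~\ref{stalkiso1} identifies $\mc{M}_x$ naturally with $\Hom_{\mb{F}_{p^r}}(M_{(x)},\mc{O}_{\Spec A})$. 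The condition that every section of $M$ has open support is local on $X$ and is inherited by the pullback $M_{(x)}$ to $\Spec A$, so Theorem~\ref{localRH} applies and shows that $\Hom_{\mb{F}_{p^r}}(M_{(x)},\mc{O}_{\Spec A})$ is a finitely-generated \emph{unit} $\mc{O}_{F^r,\Spec A}$-module; by definition this means that its structural morphism is an isomorphism. Hence the structural morphism of $\mc{M}$ is an isomorphism on the stalk at $x$, and since $x$ was arbitrary the proof is complete.

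The one step that requires genuine care --- and the main obstacle --- is checking that Proposition~\ref{stalkiso1} is an isomorphism of left $\mc{O}_{F^r}$-modules, not merely of $\mc{O}_X$-modules: one must verify that its natural isomorphism intertwines the two Frobenius-semilinear endomorphisms and, under that identification, carries the stalk at $x$ of the structural morphism of $\mc{M}$ to the structural morphism furnished by Theorem~\ref{localRH}. Both semilinear structures originate from the $p^r$-th power endomorphism of the relevant structure sheaf, an operation compatible with localization and with the formation of $\sheafHom_{\mb{F}_{p^r}}(-,\cdot)$, so this amounts to a naturality argument rather than a computation; but it is precisely what legitimizes transporting the conclusion of Theorem~\ref{localRH} back to $\mc{M}$ itself.
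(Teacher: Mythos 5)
Your argument is correct and is essentially the paper's: both reduce the assertion to stalks at closed points, invoke Proposition~\ref{stalkiso1} to identify $\mc{M}_x$ with $\Hom_{\mb{F}_{p^r}}(M_{(x)},\mc{O}_{\Spec\mc{O}_{X,x}})$, and then apply Theorem~\ref{localRH} to see that this is a unit module. Your opening paragraph about the dense open $U$ is redundant once you pass to stalks (the stalk argument already covers every closed point), and your closing remark that the stalk isomorphism of Proposition~\ref{stalkiso1} must be checked to intertwine the two $F^r$-actions is a fair naturality point that the paper leaves implicit.
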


{\it Proof.} It suffices to show that for any closed point
$x \in \left| X \right|$, the structural morphism of 
$\mc{M}_x$ is an isomorphism.  By Proposition~\ref{stalkiso1},
there exist isomorphisms
\begin{eqnarray}
\mc{M}_x
\to \Hom_{\mb{F}_{p^r}} \left( M_{(x)} , 
\mc{O}_{\Spec \mc{O}_{X, x}} \right)
\end{eqnarray}
for each closed point $x \in \left| X \right|$.
By Theorem~\ref{localRH}, each left $\mc{O}_{X, x}[F^r]$-module
\begin{eqnarray}
\Hom_{\mb{F}_{p^r}} \left( M_{(x)} , \mc{O}_{\Spec \mc{O}_{X, x}} \right)
\end{eqnarray}
is a unit $\mc{O}_{X, x}[F^r]$-module.
$\Box$

\begin{lemma}
The left $\mc{O}_{F^r, X}$-module $\mc{M}$ is generated
by a finite number of sections.
\end{lemma}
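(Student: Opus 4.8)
The plan is to exhibit a coherent $\mc{O}_X$-submodule $\mc{M}_0 \subseteq \mc{M}$ that generates $\mc{M}$ as a left $\mc{O}_{F^r,X}$-module. We may assume $X = \Spec R$ is affine (it is enough to prove the statement for $\mc{M}$ restricted to each affine open subscheme of $X$, and that is all Proposition~\ref{isfiniteunit} ultimately needs). Granting such an $\mc{M}_0$, coherence over the affine $X$ produces finitely many sections $m_1,\dots,m_t \in \Gamma(X,\mc{M}_0) \subseteq \Gamma(X,\mc{M})$ generating $\mc{M}_0$ over $\mc{O}_X$; since $\mc{M}_0$ generates $\mc{M}$ over $\mc{O}_{F^r,X}$, the same sections $m_1,\dots,m_t$ generate $\mc{M}$ over $\mc{O}_{F^r,X}$. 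So the whole problem is to construct $\mc{M}_0$.

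First I would isolate the bad locus. Since $M$ is constructible it is locally constant on a dense open $U \subseteq X$, and after shrinking we may take $U = X \smallsetminus \{x_1,\dots,x_s\}$ with the $x_i$ closed points; on $U$ the sheaf $\mc{M}_{\mid U} = \sheafHom_{\mb{F}_{p^r}}(M_{\mid U},\mc{O}_U)$ is locally free of finite rank, hence coherent. At each bad point, Proposition~\ref{stalkiso1} and Theorem~\ref{localRH} identify the \'etale stalk $\mc{M}_{x_i}$, via the isomorphism $\mc{O}_{X,x_i}\cong A$, with a finitely-generated unit $A[F^r]$-module (the localization $M_{(x_i)}$ is still constructible with sections of open support, so Theorem~\ref{localRH} applies); moreover $\mc{M}_{x_i}$ is torsion-free over $A$, because $\mc{M}$ is torsion-free over $\mc{O}_X$ (a homomorphism $M \to \mc{O}_X$ killed by a non-zero-divisor is zero, as $\mc{O}_X$ is torsion-free). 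By the theorem of Blickle recalled in Section~\ref{localsection}, $\mc{M}_{x_i}$ then admits a root $W_{0,i}$, a finitely generated $\mc{O}_{X,x_i}$-submodule.

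Now I would assemble $\mc{M}_0$ from these data. On the noetherian scheme $X$ the quasi-coherent sheaf $\mc{M}$ is the directed union of its coherent $\mc{O}_X$-subsheaves; write $\mc{M} = \bigcup_\alpha \mc{F}_\alpha$. Since $U$ is noetherian and $\mc{M}_{\mid U}$ is coherent, the ascending union $\mc{M}_{\mid U} = \bigcup_\alpha (\mc{F}_\alpha)_{\mid U}$ stabilizes, so $\mc{M}_{\mid U} = (\mc{F}_{\alpha_0})_{\mid U}$ for some index $\alpha_0$; and since $W_{0,i}$ is a finitely generated submodule of $\mc{M}_{x_i}$, which is the directed union of the stalks $(\mc{F}_\alpha)_{x_i}$, it is contained in $(\mc{F}_{\alpha_i})_{x_i}$ for some $\alpha_i$. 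Let $\mc{M}_0 \subseteq \mc{M}$ be the sum of the coherent subsheaves $\mc{F}_{\alpha_0},\mc{F}_{\alpha_1},\dots,\mc{F}_{\alpha_s}$. Then $\mc{M}_0$ is coherent, $\mc{M}_{0\mid U} = \mc{M}_{\mid U}$, and $(\mc{M}_0)_{x_i} \supseteq W_{0,i}$ for every $i$. To see that $\mc{M}_0$ generates $\mc{M}$ over $\mc{O}_{F^r,X}$, let $\mc{N} \subseteq \mc{M}$ be the $\mc{O}_X$-submodule generated by $\bigcup_{n\geq 0} F^{rn}(\mc{M}_0)$, which is an $\mc{O}_{F^r,X}$-submodule, and check $\mc{N}=\mc{M}$ on stalks. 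At a point of $U$ this is immediate from $(\mc{M}_0)_x = \mc{M}_x$. At $x_i$ the stalk $\mc{N}_{x_i}$ is the $\mc{O}_{X,x_i}$-submodule of $\mc{M}_{x_i}$ generated by $\bigcup_{n\geq 0} F^{rn}\big((\mc{M}_0)_{x_i}\big)$, which contains the submodule generated by $\bigcup_{n\geq 0} F^{rn}(W_{0,i})$; by the filtration attached to the root $W_{0,i}$ (the local analogue of Proposition~\ref{rootfiltration}, valid because $\mc{M}_{x_i}$ is torsion-free) this already exhausts $\mc{M}_{x_i}$. Hence $\mc{N} = \mc{M}$, and $\mc{M}_0$ is the desired coherent generating submodule.

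I expect the one substantive point to be the construction of $\mc{M}_0$, and in particular avoiding a tempting but incorrect shortcut: one cannot simply lift a set of $\mc{O}_U$-generators of $\mc{M}_{\mid U}$ to global sections of $\mc{M}$ by clearing poles at the $x_i$ (as in the proof that $\mc{M}$ is quasi-coherent), because such liftings lie in coherent subsheaves whose stalk at $x_i$ need not contain a root of $\mc{M}_{x_i}$ --- the largest $K$-subspace of $\mc{M}_{x_i}$ is infinitely divisible by a local parameter, as in Example~\ref{quadraticexample} --- so their $F^r$-iterates can fail to generate $\mc{M}_{x_i}$. What rescues the argument is the combination of the local structure theory (finitely-generated unit $A[F^r]$-modules have roots) with the fact that a quasi-coherent sheaf on a noetherian scheme is the directed union of its coherent subsheaves, which together let one capture a root at each of the finitely many bad points inside a single coherent subsheaf of $\mc{M}$; the remaining verifications are routine and local.
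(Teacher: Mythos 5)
Your argument is correct, but it takes a heavier route than the paper does. The paper's proof of this lemma does not construct a coherent generating subsheaf and does not invoke the existence of roots at all: it simply chooses (a) finitely many Zariski sections over $U$ generating $\mc{M}_{\mid U}$ as an $\mc{O}_U$-module, and (b) for each point $x \in \left| X \smallsetminus U \right|$, finitely many Zariski sections over a neighborhood of $x$ whose stalks generate $\mc{M}_x$ as a left $\mc{O}_{X,x}[F^r]$-module --- the latter using only that $\mc{M}_x$ is a finitely-generated unit $\mc{O}_{X,x}[F^r]$-module, which comes straight from Proposition~\ref{stalkiso1} and Theorem~\ref{localRH}. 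The sub-$\mc{O}_{F^r,X}$-module generated by these sections is then shown to equal $\mc{M}$ by checking stalks, which is the same closing move you use. The ``tempting shortcut'' you warn against (clearing poles of the $\mc{O}_U$-generators alone) is a genuine trap, but it is a third route that the paper does not take either; the paper's escape is the same in spirit as yours --- supplement the $\mc{O}_U$-generators with data adapted to each bad stalk --- except that plain $\mc{O}_{X,x}[F^r]$-generators suffice, so Blickle's existence-of-roots theorem and the directed-colimit argument are unnecessary here. What your construction buys, at the cost of that extra machinery, is a single coherent $\mc{O}_X$-subsheaf $\mc{M}_0$ generating $\mc{M}$ over $\mc{O}_{F^r,X}$, which anticipates the global root produced later in Theorem~\ref{minrootthm}.
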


{\it Proof.} Let $U \subseteq X$ be a Zariski open subset
on which $M$ is locally constant.  By Proposition~\ref{stalkiso1}
and Theorem~\ref{localRH}, each stalk $\mc{M}_x$ is a finitely-generated unit
$\mc{O}_{X, x}[F^r]$-module.  For each point $x \in \left| 
X \smallsetminus U \right|$, choose a finite set
of sections of $\mc{M}$ on a Zariski
open neighborhood of $x$ which generate $\mc{M}_x$ as a
left $\mc{O}_{X, x}[F^r]$-module.  Choose a finite set
of sections of the coherent $\mc{O}_U$-module $\mc{M}_{\mid U}$
which generate $\mc{M}_{\mid U}$ as an $\mc{O}_U$-module.
Let $\mc{M}' \subseteq \mc{M}$ be the sub-left-$\mc{O}_{F^r, X}$-module
generated by all of the aforementioned sections.  The stalk
of $\mc{M}'$ at any closed point $x$ is equal to 
$\mc{M}$.  Therefore $\mc{M}' = \mc{M}$.  $\Box$

It is clear that $\mc{M}$ is torsion-free as an $\mc{O}_X$-module.
The proof of Proposition~\ref{isfiniteunit} is complete.
$\Box$

\subsection{The functor $\sheafHom_{\mc{O}_{F^r, X}} \left( \cdot ,
\mc{O}_X \right)$}

If $\mc{M}$ is a unit $\mc{O}_{F^r, X}$-module, then the sheaf
of homomorphisms
\begin{eqnarray}
\sheafHom_{\mc{O}_{F^r, X}} \left( \mc{M} , \mc{O}_X \right)
\end{eqnarray}
is a sheaf of $\mb{F}_{p^r}$-vector spaces on $X$.

\begin{proposition}
\label{stalkiso2}
Let $\mc{M}$ be an object of $\lmod^{fu} ( 
X , \mc{O}_{F^r, X} )$.  Let $x$ be a closed point
of $X$.
The natural homomorphism
\begin{eqnarray}
\label{stalkiso2exp}
\sheafHom_{\mc{O}_{F^r, X}} ( \mc{M} , \mc{O}_X )_x
\to \Hom_{\mc{O}_{X, x}[F^r]} ( \mc{M}_x ,
\mc{O}_{X, x} )
\end{eqnarray}
is an isomorphism.
\end{proposition}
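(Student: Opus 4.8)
The plan is to move the question onto a \emph{coherent} subsheaf of $\mc{M}$ --- a root --- for which the formation of $\sheafHom$ commutes with localization for trivial reasons, and then to identify $\mc{O}_{F^r,X}$-linear homomorphisms out of $\mc{M}$ with Frobenius-invariant $\mc{O}_X$-linear homomorphisms out of the root, via a sheaf-theoretic form of Proposition~\ref{invariantsandroots}. First one reduces to the case in which $\mc{M}$ is torsion-free as an $\mc{O}_X$-module: any $\mc{O}_X$-torsion in $\mc{M}$ is supported at finitely many closed points of the curve $X$, and since the structural morphism is an isomorphism $F^{r*}_X\mc{M}\to\mc{M}$ while $F^r_X$ is finite flat of degree $p^r$, a length count at such a point forces the torsion to vanish. (Alternatively: the $\mc{O}_X$-torsion subsheaf is an $\mc{O}_{F^r,X}$-submodule which is annihilated by every homomorphism into the torsion-free sheaf $\mc{O}_X$, and likewise on stalks, so it may be divided out without changing either side of~(\ref{stalkiso2exp}).)

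So assume $\mc{M}$ is $\mc{O}_X$-torsion-free, and fix a root $\mc{M}_0\subseteq\mc{M}$; this exists because $X$ is smooth (Theorem~6.1.3 of \cite{emertonandkisin}). Exactly as in the discussion preceding Proposition~\ref{invariantsandroots}, the isomorphism $F^{r*}_X\mc{M}_0\to\mc{M}_1$ of Proposition~\ref{rootfiltration} together with the inclusion $\mc{M}_0\subseteq\mc{M}_1$ from property~(2) of Definition~\ref{rootdef} endows $\sheafHom_{\mc{O}_X}(\mc{M}_0,\mc{O}_X)$ with a Frobenius-semilinear endomorphism $F^r$, and restriction of homomorphisms along $\mc{M}_0\hookrightarrow\mc{M}$ gives a natural morphism of sheaves
\[
\sheafHom_{\mc{O}_{F^r,X}}(\mc{M},\mc{O}_X)\longrightarrow\sheafHom_{\mc{O}_X}(\mc{M}_0,\mc{O}_X)^{F^r}.
\]
I would prove this is an isomorphism by evaluating on any affine open $U\subseteq X$: there $\mc{M}_0|_U$ is again a root of the finitely-generated unit $\mc{O}_{F^r,U}$-module $\mc{M}|_U$, and the displayed map becomes precisely the bijection of Proposition~\ref{invariantsandroots}, whose proof (the reasoning just before its statement, giving injectivity from generation by the root, together with the functional-extension construction) is formal and uses only the root axioms, not any special property of the local ring $A$.

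Finally I would take stalks at $x$. Since $\mc{M}_0$ is a coherent $\mc{O}_X$-module on the noetherian scheme $X$, we have $\sheafHom_{\mc{O}_X}(\mc{M}_0,\mc{O}_X)_x\cong\Hom_{\mc{O}_{X,x}}\!\big((\mc{M}_0)_x,\mc{O}_{X,x}\big)$; and passing to $F^r$-invariants commutes with passing to stalks, because the stalk functor is exact on sheaves of abelian groups and $(-)^{F^r}=\ker(1-F^r)$ (the fact that $1-F^r$ is only additive is immaterial). Now $(\mc{M}_0)_x$ is a root of the finitely-generated torsion-free unit $\mc{O}_{X,x}[F^r]$-module $\mc{M}_x$ --- localization is exact and carries over the three conditions of Definition~\ref{rootdef} --- so Proposition~\ref{invariantsandroots}, applied with $A=\mc{O}_{X,x}$, $W=\mc{M}_x$, $W_0=(\mc{M}_0)_x$, identifies $\Hom_{\mc{O}_{X,x}}\!\big((\mc{M}_0)_x,\mc{O}_{X,x}\big)^{F^r}$ with $\Hom_{\mc{O}_{X,x}[F^r]}(\mc{M}_x,\mc{O}_{X,x})$. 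Composing these identifications with the stalk at $x$ of the sheaf isomorphism above gives an isomorphism, and a short diagram chase (both sides restrict the same way to germs of homomorphisms, and $(\mc{M}_0)_x$ generates $\mc{M}_x$) shows it coincides with the canonical map~(\ref{stalkiso2exp}).

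The step I expect to be the main obstacle is the sheaf-level isomorphism in the second paragraph: verifying honestly that ``restrict to the root'' is a bijection onto $F^r$-invariants over every affine open, which amounts to checking that the argument establishing Proposition~\ref{invariantsandroots} is valid over an arbitrary smooth affine $k$-curve and not only over the Henselian ring $A$. Everything else --- the torsion reduction, coherence of roots, exactness of the stalk functor, and stability of roots under localization --- is routine.
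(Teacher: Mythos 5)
Your argument reaches the right conclusion, but it takes a genuinely different and considerably longer route than the paper's. The paper's proof is direct: given a local homomorphism $\phi \colon \mc{M}_x \to \mc{O}_{X,x}$, one fixes an affine open $U \ni x$, takes finitely many generators $p_1,\ldots,p_c$ of $\Gamma(U,\mc{M})$ as a left $\Gamma(U,\mc{O}_U)[F^r]$-module, and chooses an \'etale $\Gamma(U,\mc{O}_U)$-algebra $R'$ inside $\mc{O}_{X,x}$ large enough to contain $\phi(p_i)$; then $\phi$ descends to $R'\otimes P \to R'$, i.e.\ extends to an \'etale neighborhood. No roots and no torsion-freeness are needed. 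Your proof instead passes through a coherent root, establishes a sheaf-level identification $\sheafHom_{\mc{O}_{F^r,X}}(\mc{M},\mc{O}_X)\cong\sheafHom_{\mc{O}_X}(\mc{M}_0,\mc{O}_X)^{F^r}$ by re-running the reasoning behind Proposition~\ref{invariantsandroots} on each smooth affine, and then commutes $\sheafHom$ of a coherent sheaf and $\ker(1-F^r)$ with stalks. This is valid, and it has the aesthetic merit of being a sheafification of the reasoning that later underlies Theorem~\ref{globalexactsequence}; but it requires more of the theory (existence of roots, Proposition~\ref{invariantsandroots}, the torsion reduction) than the proposition itself demands, and it gives the map only for torsion-free $\mc{M}$ unless one does the reduction first.

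One genuine error: the first version of your torsion reduction is wrong. A ``length count'' does not force the $\mc{O}_X$-torsion of an lfgu module to vanish, because that torsion need not be coherent. For instance $K/A$, with $K=\textnormal{Frac}(A)$ and the obvious $F^r$-action, is a finitely-generated unit $A[F^r]$-module (generated by the class of $t^{-1}$, with root $A\cdot\overline{t^{-1}}\cong k$) that is entirely $A$-torsion and of infinite length; the root filtration has lengths $p^{rn}$. Your parenthetical alternative is the correct reduction: since $F^r_X$ is finite flat, the structural isomorphism carries $F^{r*}_X T$ onto $T$ (pullback of torsion is torsion, pullback of torsion-free is torsion-free), so $\mc{M}/T$ is again lfgu, and both sides of~(\ref{stalkiso2exp}) are unchanged when $\mc{M}$ is replaced by $\mc{M}/T$ because $\mc{O}_X$ and $\mc{O}_{X,x}$ are torsion-free. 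With that correction, and granting that the argument for Proposition~\ref{invariantsandroots} transports verbatim to any smooth affine $k$-curve (it does --- it uses only flatness of $F^r$ and the root axioms), your proof goes through.
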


{\it Proof.} It is clear that (\ref{stalkiso2exp})
is injective.  To prove the proposition it suffices
to show that any element of
\begin{eqnarray}
\Hom_{\mc{O}_{X, x}[F^r]} ( \mc{M}_x ,
\mc{O}_{X, x} )
\end{eqnarray}
may be extended to a left $\mc{O}_{F^r, X}$-module
homomorphism from $\mc{M}$ to $\mc{O}_X$ over an \'etale
neighborhood of $x$.

Let
\begin{eqnarray}
\phi \colon \mc{M}_x \to \mc{O}_{X, x}
\end{eqnarray}
be a left $\mc{O}_{X, x}[F^r]$-module homomorphism.
Let $U \subseteq X$ be an affine
neighborhood of $x$.  Let $R = \Gamma ( U ,
\mc{O}_U )$ and $P = \Gamma ( U , \mc{M} )$.
Let $\left\{ p_1, \ldots, p_c \right\} \subseteq
P$ be a subset which generates $P$ as a left
$R[F^r]$-module.  Choose an \'etale $R$-algebra
$R' \subseteq \mc{O}_{X, x}$ large enough
to contain the images of the stalks of each
$p_i$ under $\phi$.  The homomorphism
\begin{eqnarray}
\mc{O}_{X, x} \otimes_R P \to \mc{O}_{X, x}
\end{eqnarray}
determined by $\phi$ restricts to a homomorphism
\begin{eqnarray}
R' \otimes_R P \to R'.
\end{eqnarray}
Thus there is a homomorphism from $\mc{M}_{\mid \Spec R'}$ 
to $\mc{O}_{\Spec R'}$ whose stalk is $\phi$.  $\Box$

\begin{proposition}
\label{etaleconstant}
Let $\mc{M}$ be an object of $\lmod^{fu} ( X , 
\mc{O}_{F^r, X} )$.  Let
\begin{eqnarray}
M = \sheafHom_{\mc{O}_{F^r, X}} ( \mc{M} ,
\mc{O}_X ).
\end{eqnarray}
Then there exists a nonempty \'etale $X$-scheme
$V$ such that $M_{\mid V}$
is a constant $\mb{F}_{p^r}$-sheaf of finite rank.
\end{proposition}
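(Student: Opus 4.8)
The plan is to reduce the global statement to a local one at a single closed point, using the already-established local Riemann--Hilbert correspondence and its refinement via roots. First I would choose a nonempty affine open $U \subseteq X$ on which $\mc{M}$ is locally free as an $\mc{O}_X$-module; such a $U$ exists by Proposition~\ref{densecoherent}, since the root of $\mc{M}$ agrees with $\mc{M}$ on a dense open, and a coherent sheaf is locally free on a (possibly smaller) dense open. On such a $U$, the restriction $\mc{M}_{\mid U}$ is a free finite-rank $\mc{O}_U$-module with a unit $\mc{O}_{F^r, U}$-structure. The complement $X \smallsetminus U$ consists of finitely many closed points.

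Next I would analyze the situation point-by-point. By Proposition~\ref{stalkiso2}, the stalk of $M = \sheafHom_{\mc{O}_{F^r, X}}(\mc{M}, \mc{O}_X)$ at a closed point $x$ is $\Hom_{\mc{O}_{X,x}[F^r]}(\mc{M}_x, \mc{O}_{X,x})$, where $\mc{O}_{X,x} \cong A$. By Theorem~\ref{localRH} (second part, applied over $Z = \Spec \mc{O}_{X,x}$), this is a constructible $\mb{F}_{p^r}$-sheaf on $\Spec \mc{O}_{X,x}$, hence in particular its generic stalk $M_{\overline{\eta}}$ is a finite $\mb{F}_{p^r}$-vector space of some dimension $n$ (the generic rank of $\mc{M}$). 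Using Proposition~\ref{trivfieldext} and Corollary~\ref{trivfieldextcor}, after a finite separable base change the generic fiber of $\mc{M}$ is trivialized, so the generic fiber of $M$ becomes constant of rank $n$; I would then spread this trivialization out to a nonempty \'etale neighborhood of the generic point.

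The key step is then to combine these trivializations over a single \'etale cover. Since only finitely many points of $X$ are involved (the points of $X \smallsetminus U$, plus the need to trivialize the generic behavior), and since at each such point the local trivialization data can be extended to a genuine \'etale neighborhood by Proposition~\ref{stalkiso2} (which says local $\mc{O}_{F^r,X}$-module homomorphisms on stalks extend to \'etale neighborhoods), I would take an \'etale $X$-scheme $V$ that simultaneously dominates the generic trivializing cover and each local trivializing neighborhood; shrinking $V$ so its image avoids the bad points except where needed, and using that a finite intersection of dense opens is dense, I obtain a connected nonempty \'etale $X$-scheme $V$ on which $M_{\mid V}$ has constant stalks of rank $n$ and trivial descent data. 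On such a $V$ the sheaf $M_{\mid V}$ is then constant.

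The main obstacle I anticipate is the bookkeeping of \emph{patching the \'etale covers}: each point of $X\smallsetminus U$, as well as the generic point, contributes its own separable field extension $K'/K(X)$ and its own \'etale neighborhood, and these must be amalgamated into one \'etale $X$-scheme without destroying the property that $M$ is already locally constant away from the bad points. This is essentially a standard ``finite intersection of dense \'etale opens'' argument together with compositum-of-separable-extensions, but making it precise — especially ensuring that the chosen $V$ is nonempty and that the trivialization at one point is not disturbed by base-changing to trivialize at another — requires some care. Once the patching is arranged, the conclusion that $M_{\mid V}$ is constant follows immediately from Proposition~\ref{sepclosedfrobenius}-type reasoning already embedded in Theorem~\ref{localRH} and Proposition~\ref{trivfieldext}.
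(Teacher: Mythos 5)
There is a genuine misconception here that both over-complicates the argument and leads you into an obstacle that cannot be resolved. The proposition only asks for \emph{some} nonempty \'etale $X$-scheme $V$ on which $M$ is constant --- it makes no claim near the finitely many ``bad'' closed points of $X \smallsetminus U$. Your plan to ``analyze the situation point-by-point'' at those bad points and then ``combine these trivializations over a single \'etale cover'' is attempting something that is in general impossible: \'etale morphisms preserve geometric stalks, so if the constructible sheaf $M$ has different stalk dimensions at a bad point $x$ and at a nearby point (which is exactly what happens when $\mc{M}_x$ is not free), then $M_{\mid V}$ cannot be constant on any connected $V$ whose image contains $x$ together with a neighborhood of $x$. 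The ``main obstacle'' you flag about patching the local covers is therefore not a bookkeeping nuisance but a sign that the strategy is aimed at the wrong target; it disappears entirely once you accept that the bad points should simply be discarded.

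The paper's proof is precisely the ``generic'' half of your argument, with nothing else. By Proposition~\ref{densecoherent} one passes to a dense open $X' \subseteq X$ on which $\mc{M}$ is coherent (hence, being torsion-free on a smooth curve, locally free). At the geometric generic point $\overline{\alpha}$ of $X'$, Proposition~\ref{sepclosedfrobenius} gives an $F^r$-invariant basis of $\mc{M}_{\overline{\alpha}}$; one chooses an \'etale $U \to X'$ on which this basis is represented by sections $m_1,\dots,m_e \in \mc{M}(U)$, then shrinks to a nonempty open $V \subseteq U$ on which these sections generate $\mc{M}$. On $V$ one has an isomorphism $\mc{M}_{\mid V} \cong \mc{O}_V^{\oplus e}$ of left $\mc{O}_{F^r,V}$-modules, whence $M_{\mid V}$ is the constant sheaf $\underline{\mb{F}_{p^r}}^{\oplus e}$. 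No appeal to Theorem~\ref{localRH}, Proposition~\ref{stalkiso2}, or the structure theory of $\mc{M}_x$ at bad points is needed, and no patching of covers occurs. Your proposal does contain this correct core, but it is buried under machinery directed at a problem the proposition does not pose.
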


{\it Proof.}
By Proposition~\ref{densecoherent}, there exists a nonempty
open subset $X' \subseteq X$ such that $\mc{M}_{\mid X'}$
is a coherent $\mc{O}_{X'}$-module.
Let $\alpha \in \left| X' \right|$
denote the generic point, and let
\begin{eqnarray}
\overline{\alpha} \colon \Spec \overline{k ( \alpha 
) } \to X'
\end{eqnarray}
denote a geometric point at $\alpha$.
The geometric
stalk $\mc{M}_{\overline{\alpha}}$ has a 
$\overline{k( \alpha )}$-basis that is fixed by
$F^r$ (by Proposition~\ref{sepclosedfrobenius}).
Choose an \'etale scheme $U$ over $X'$ on
which there exist representatives
\begin{eqnarray}
m_1 , \ldots , m_e \in
\mc{M}(U)
\end{eqnarray}
for the elements of this basis.
The coherent subsheaf of $\mc{M}_{\mid U}$
generated by $\left\{ m_i \right\}_{i=1}^e$
has the same generic rank as $\mc{M}_{\mid U}$.  Let
$V \subseteq U$ be a nonempty open subset on which these
two sheaves are equal.  Then $\mc{M}_{\mid V}$ is
isomorphic as a left $\mc{O}_{F^r, X}$-module to
$\mc{O}_V^{\oplus e}$.  Therefore $M_{\mid V}$ is
isomorphic to a constant $\mb{F}_{p^r}$-\'etale sheaf
of rank $e$.  $\Box$

\begin{corollary}
Let $\mc{M}$ be an object of $\lmod^{fu} ( X , 
\mc{O}_{F^r, X} )$ which is torsion-free as
an $\mc{O}_X$-module.  Then
\begin{eqnarray}
M = \sheafHom_{\mc{O}_{F^r, X}} ( \mc{M} ,
\mc{O}_X )
\end{eqnarray}
is a constructible $\mb{F}_{p^r}$-\'etale sheaf
on $X$.  
\end{corollary}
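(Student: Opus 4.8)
The plan is to verify constructibility directly against its definition on the curve $X$: it suffices to produce a dense open $U \subseteq X$ over which $M$ is locally constant of finite rank, together with the fact that the stalk $M_x$ is finite at each of the finitely many closed points $x$ outside $U$; the stratification of $X$ into $U$ and the points of $X \smallsetminus U$ then exhibits $M$ as constructible. First I would reduce to the case that $X$ is connected, hence irreducible, handling connected components separately.

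For the open part, I would invoke Proposition~\ref{etaleconstant}, which supplies a nonempty \'etale $X$-scheme $V$ with $M_{\mid V}$ a constant $\mb{F}_{p^r}$-sheaf of finite rank $n$. Taking $U$ to be the image of $V \to X$ --- open because \'etale morphisms are open, and dense because $X$ is irreducible --- the induced morphism $V \to U$ is an \'etale cover over which $M_{\mid U}$ is constant. Hence $M_{\mid U}$ is locally constant with all stalks isomorphic to $\mb{F}_{p^r}^{n}$.

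For the finitely many closed points $x \in X \smallsetminus U$, the point is to see that $M_x$ is finite. Here I would use the localization machinery already developed: by Proposition~\ref{stalkiso2}, $M_x \cong \Hom_{\mc{O}_{X, x}[F^r]}( \mc{M}_x , \mc{O}_{X, x})$, and $\mc{M}_x$ is an lfgu $\mc{O}_{X, x}[F^r]$-module which is torsion-free over $\mc{O}_{X, x} \cong A$. Applying the second half of Theorem~\ref{localRH} over $Z = \Spec \mc{O}_{X, x}$ shows that $\sheafHom_{\mc{O}_{F^r, Z}}( \mc{M}_x , \mc{O}_Z)$ is a constructible $\mb{F}_{p^r}$-\'etale sheaf on $Z$, so its stalks are finite; applying Proposition~\ref{stalkiso2} once more on $Z$ at its closed point (where the \'etale local ring is again $\mc{O}_{X, x}$, since $\mc{O}_{X, x}$ is Henselian local) identifies that stalk with $\Hom_{\mc{O}_{X, x}[F^r]}( \mc{M}_x , \mc{O}_{X, x}) \cong M_x$. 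Thus $M_x$ is finite. Alternatively, one could read off finiteness of $M_x$ directly from the exact sequence of Theorem~\ref{localRHalt}, since the kernel of $1 - F^r$ acting on a finite free $A$-module of the form $W_0^\vee$ is cut out by a finite \'etale $A$-algebra and is therefore finite.

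I do not expect a genuine obstacle: the substantive input --- the local Riemann-Hilbert statement and its compatibility with stalks --- is already in hand. The remaining work is bookkeeping: matching stalks of the global $\sheafHom$ with those of its localizations by means of Propositions~\ref{stalkiso1} and~\ref{stalkiso2}, the harmless reduction to connected $X$, and confirming that the stratification by $U$ and the points of $X \smallsetminus U$ is the right one against which to test constructibility on a one-dimensional scheme.
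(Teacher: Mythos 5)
Your proposal is correct and follows essentially the same route as the paper's own argument: Proposition~\ref{etaleconstant} supplies the dense open locus on which $M$ is locally constant, and Theorem~\ref{localRH}, transferred to the stalk via Proposition~\ref{stalkiso2}, gives finiteness at the remaining closed points. The extra material in your write-up --- the reduction to connected $X$, the explicit observation that $\mc{O}_{X,x}$ is strictly Henselian so global sections over $\Spec\mc{O}_{X,x}$ coincide with the closed-point stalk, and the alternative finiteness argument via Theorem~\ref{localRHalt} --- is sound elaboration rather than a different method.
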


{\it Proof.} Proposition~\ref{etaleconstant}
implies that $M$ is locally constant on a nonempty
open subset of $X$.  Theorem~\ref{localRH} implies
(via Proposition~\ref{stalkiso2}) that the stalks
of $M$ are finite.~$\Box$

\begin{proposition}
\label{globaldoubledual1}
Let $\mc{M}$ be a torsion-free lfgu $\mc{O}_{F^r, X}$-module.
The double-dual
homomorphism
\begin{eqnarray}
\label{ddmorphism1}
\mc{M} \to \sheafHom_{\mb{F}_{p^r}} \left(
\sheafHom_{\mc{O}_{F^r , X}} \left( \mc{M} , \mc{O}_X 
\right) , \mc{O}_X \right)
\end{eqnarray}
is an isomorphism.
\end{proposition}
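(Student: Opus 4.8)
The plan is to reduce the assertion to the local Riemann--Hilbert double-dual isomorphism (the second half of Theorem~\ref{localRH}) by passing to stalks at the finitely many ``ramified'' closed points of $X$.

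First I would note that (\ref{ddmorphism1}) is an $\mc{O}_X$-linear morphism between quasi-coherent $\mc{O}_X$-modules: the source is quasi-coherent since it is lfgu, and the target is quasi-coherent because $N := \sheafHom_{\mc{O}_{F^r, X}} \left( \mc{M} , \mc{O}_X \right)$ is a constructible $\mb{F}_{p^r}$-\'etale sheaf (the corollary following Proposition~\ref{etaleconstant}), and for any constructible $\mb{F}_{p^r}$-sheaf $N$ the sheaf $\sheafHom_{\mb{F}_{p^r}} \left( N , \mc{O}_X \right)$ is quasi-coherent by the argument given in the first lemma of the proof of Proposition~\ref{isfiniteunit}. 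By Proposition~\ref{densecoherent} there is a dense open $U \subseteq X$ on which $\mc{M}$ is coherent, hence (shrinking $U$, using that $\mc{M}$ is torsion-free) locally free of finite rank, and by Proposition~\ref{etaleconstant} we may further shrink $U$ so that $N_{\mid U}$ is a locally constant $\mb{F}_{p^r}$-sheaf of finite rank. Over $U$ the morphism (\ref{ddmorphism1}) is an isomorphism, since there it is the classical (Galois descent) double-dual of the unramified correspondence between locally free unit $\mc{O}_{F^r, U}$-modules and locally constant $\mb{F}_{p^r}$-sheaves; indeed, after an \'etale localization that trivializes the unit structure on $\mc{M}_{\mid U}$ it becomes the identity of $\mc{O}_U^{\oplus n}$. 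Hence the kernel and cokernel of (\ref{ddmorphism1}) are quasi-coherent skyscraper sheaves supported on the finite set $X \smallsetminus U$, and it remains only to show that (\ref{ddmorphism1}) induces an isomorphism on the \'etale stalk at each such closed point $x$.

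Fix such an $x$, put $A = \mc{O}_{X, x}$ and $Z_x = \Spec A$, and let $\mc{M}_{(x)}$ denote the pullback of $\mc{M}$ to $Z_x$; this is a torsion-free lfgu $\mc{O}_{F^r, Z_x}$-module, the lfgu property being inherited by pullback and torsion-freeness passing to the (filtered colimit defining the) stalk. Applying Proposition~\ref{stalkiso1} to the constructible sheaf $N$ identifies the stalk at $x$ of $\sheafHom_{\mb{F}_{p^r}} \left( N , \mc{O}_X \right)$ with $\Hom_{\mb{F}_{p^r}} \left( N_{(x)} , \mc{O}_{Z_x} \right)$. The key bookkeeping step is then to identify $N_{(x)}$ with $\sheafHom_{\mc{O}_{F^r, Z_x}} \left( \mc{M}_{(x)} , \mc{O}_{Z_x} \right)$: the natural comparison morphism between these two sheaves on $Z_x$ is an isomorphism because it is one on the stalk at the closed point of $Z_x$ by Proposition~\ref{stalkiso2}, and one on the stalk at the generic point $\eta$ of $Z_x$ because there $\mc{M}$ becomes a finite-dimensional $K(X)[F^r]$-vector space (Proposition~\ref{findim}) and both sheaves reduce to the ordinary $\Hom$ over that field. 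Tracing the double-dual map through these identifications, the stalk at $x$ of (\ref{ddmorphism1}) becomes the value at the closed point of $Z_x$ of the local double-dual morphism $\mc{M}_{(x)} \to \sheafHom_{\mb{F}_{p^r}} \left( \sheafHom_{\mc{O}_{F^r, Z_x}} \left( \mc{M}_{(x)} , \mc{O}_{Z_x} \right) , \mc{O}_{Z_x} \right)$. By the second half of Theorem~\ref{localRH} this morphism is an isomorphism, hence so is its stalk at $x$, and the proof is finished.

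The step I expect to be the main obstacle is the identification of $N_{(x)}$ and the compatibility claim in the previous paragraph: Propositions~\ref{stalkiso1} and~\ref{stalkiso2} are designed precisely to make the global functors $\sheafHom_{\mb{F}_{p^r}} \left( \cdot , \mc{O}_X \right)$ and $\sheafHom_{\mc{O}_{F^r, X}} \left( \cdot , \mc{O}_X \right)$ compatible with localization at $x$, but one must combine them with the (easy) generic-point case over a field and then carefully check that the restriction of the global double-dual morphism (\ref{ddmorphism1}) to $Z_x$ matches, under these identifications, the local double-dual morphism of Theorem~\ref{localRH}. Everything else---quasi-coherence, the dense-open reduction, and the behaviour of the unramified locus---is routine.
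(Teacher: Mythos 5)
Your proposal is correct and follows exactly the paper's own strategy: reduce to checking that (\ref{ddmorphism1}) is an isomorphism on stalks at closed points, then invoke Theorem~\ref{localRH} via the localization compatibilities of Propositions~\ref{stalkiso1} and~\ref{stalkiso2}. The paper's proof is a one-line sketch; you have simply filled in the intermediate bookkeeping (quasi-coherence of both sides, the dense-open reduction, the identification of $N_{(x)}$ with the local dual of $\mc{M}_{(x)}$ via the closed and generic stalks of $Z_x$), all of which is consistent with what the paper implicitly assumes.
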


\begin{proposition}
\label{globaldoubledual2}
Let $M$ be a constructible $\mb{F}_{p^r}$-\'etale sheaf
on $X$ whose sections all have open support.  The
double-dual homomorphism
\begin{eqnarray}
\label{ddmorphism2}
M \to \sheafHom_{\mc{O}_{F^r, X}} \left(
\sheafHom_{\mb{F}_{p^r}} \left( M , \mc{O}_X \right) ,
\mc{O}_X \right)
\end{eqnarray}
is an isomorphism.
\end{proposition}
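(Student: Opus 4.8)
The plan is to prove Proposition~\ref{globaldoubledual2} by reducing the global statement to the already-established local one. Since the double-dual homomorphism (\ref{ddmorphism2}) is a morphism of \'etale sheaves on $X$, checking that it is an isomorphism may be done stalk-by-stalk at closed points (the sheaves involved are of finite type, and the generic behavior is controlled as well). So first I would fix a closed point $x \in |X|$ and consider the induced map on stalks. By Proposition~\ref{stalkiso1}, the stalk of $\sheafHom_{\mb{F}_{p^r}}(M, \mc{O}_X)$ at $x$ is naturally identified with $\Hom_{\mb{F}_{p^r}}(M_{(x)}, \mc{O}_{\Spec \mc{O}_{X,x}})$, which by Proposition~\ref{isfiniteunit} (or directly by Theorem~\ref{localRH}) is a torsion-free finitely-generated unit $\mc{O}_{X,x}[F^r]$-module. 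Then by Proposition~\ref{stalkiso2}, taking $\sheafHom_{\mc{O}_{F^r,X}}(\cdot, \mc{O}_X)$ and passing to the stalk at $x$ computes $\Hom_{\mc{O}_{X,x}[F^r]}$ of that module into $\mc{O}_{X,x}$.

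Next I would assemble these two identifications to see that the stalk at $x$ of the right-hand side of (\ref{ddmorphism2}) is precisely the double dual of $M_{(x)}$ computed over $\Spec \mc{O}_{X,x}$, i.e. the object
\begin{eqnarray}
\sheafHom_{\mc{O}_{F^r, \Spec \mc{O}_{X,x}}} \left( \sheafHom_{\mb{F}_{p^r}} \left( M_{(x)} , \mc{O}_{\Spec \mc{O}_{X,x}} \right) , \mc{O}_{\Spec \mc{O}_{X,x}} \right),
\end{eqnarray}
and that under these identifications the stalk of the morphism (\ref{ddmorphism2}) at $x$ is the local double-dual homomorphism for $M_{(x)}$. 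One should check that the hypothesis ``all sections have open support'' passes to the localization $M_{(x)}$, so that Theorem~\ref{localRH} applies to it; this is the condition needed to invoke the first double-dual isomorphism statement in Theorem~\ref{localRH}. That theorem then tells us the local double-dual map is an isomorphism, hence the stalk of (\ref{ddmorphism2}) at every closed point $x$ is an isomorphism. Since a morphism of \'etale sheaves that is an isomorphism on all stalks (equivalently, on all geometric stalks, including the generic one, which is handled by the $\{\eta\}$-case of the correspondence or by Proposition~\ref{etaleconstant} together with the observation that everything is locally constant there) is an isomorphism, we conclude.

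The main obstacle I anticipate is bookkeeping with the compatibilities: one must verify that the ``natural'' double-dual map on $X$ really does restrict to the ``natural'' double-dual map on $\Spec \mc{O}_{X,x}$ under the identifications furnished by Propositions~\ref{stalkiso1} and \ref{stalkiso2}. This is a matter of unwinding the definitions of the $\sheafHom$-functors and the unit if they commute with the relevant localization and pullback operations, but it is exactly the kind of diagram-chase that is easy to state and tedious to write out; I would phrase it as ``the reader may check that the following square commutes'' after setting up the two identifications. A secondary point is making precise the reduction to stalks: since $\sheafHom_{\mc{O}_{F^r,X}}(\cdot, \mc{O}_X)$ does not obviously commute with arbitrary stalks for non-coherent sheaves, it is essential to invoke Proposition~\ref{stalkiso2}, which was proved precisely to license this; and one should note that $\sheafHom_{\mc{O}_{F^r,X}}(\mc{M}, \mc{O}_X)$ for $\mc{M} = \sheafHom_{\mb{F}_{p^r}}(M, \mc{O}_X)$ is constructible (by the corollary following Proposition~\ref{etaleconstant}), so comparing it with the constructible sheaf $M$ stalkwise at closed points is legitimate. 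Once these two technical points are in place, the proof is a short formal argument.
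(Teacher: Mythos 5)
Your proposal is essentially the same argument the paper gives: reduce to stalks at closed points, identify those stalks via Propositions~\ref{stalkiso1} and \ref{stalkiso2}, and then invoke the local double-dual isomorphism of Theorem~\ref{localRH}. The paper's proof is just a compressed version of what you wrote.
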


{\it Proof of Propositions~\ref{globaldoubledual1}
and \ref{globaldoubledual2}.} It suffices to show that morphisms
(\ref{ddmorphism1}) and (\ref{ddmorphism2}) 
induce isomorphisms on closed stalks.  This
assertion follows from Theorem~\ref{localRH} via Propositions~\ref{stalkiso1}
and \ref{stalkiso2}. $\Box$

\subsection{Roots on curves}

\label{globalrootssubsection}

The next two theorems globalize results on roots from Section~\ref{localsection}.

\begin{theorem}
\label{minrootthm}
Let $\mc{M}$ be a torsion-free lfgu $\mc{O}_{F^r, X}$-module.  Then
$\mc{M}$ has a unique minimal root $\mc{M}_0$ which is contained in
every other root.  For any closed point
$x \in \left| X \right|$, the stalk of $\mc{M}_0$ at $x$ is the
minimal root of $\mc{M}_x$.
\end{theorem}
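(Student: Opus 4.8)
The plan is to construct the minimal root locally, invoking Blickle's theorem at each closed point, and then to glue. First I would fix, using Theorem~6.1.3 of \cite{emertonandkisin}, an arbitrary root $\mc{M}' \subseteq \mc{M}$, and use Proposition~\ref{densecoherent} to choose a dense open $U \subseteq X$ with $\mc{M}'_{\mid U} = \mc{M}_{\mid U}$; shrinking $U$, I may also assume $\mc{M}_{\mid U}$ is locally free, since it is coherent and torsion-free on the smooth curve $X$. Because $X$ is a curve, the complement $X \smallsetminus U = \{ x_1, \ldots, x_m \}$ is finite. For each $i$ the stalk $\mc{M}_{x_i}$ is a finitely-generated unit $\mc{O}_{X, x_i}[F^r]$-module, with $\mc{O}_{X, x_i} \cong A$, so Theorem~2.10 of \cite{blickleintersection} furnishes a minimal root $\left( \mc{M}_{x_i} \right)_0 \subseteq \mc{M}_{x_i}$; since $\mc{M}'_{x_i}$ is itself a root of $\mc{M}_{x_i}$, we have $\left( \mc{M}_{x_i} \right)_0 \subseteq \mc{M}'_{x_i}$, an inclusion of finite colength (both are free $\mc{O}_{X, x_i}$-modules of rank equal to the generic rank of $\mc{M}$).

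Next I would let $\mc{M}_0 \subseteq \mc{M}'$ be the coherent subsheaf whose stalk at each $x_i$ is $\left( \mc{M}_{x_i} \right)_0$ and which agrees with $\mc{M}'$ at every other point; such a subsheaf exists and is coherent precisely because the colengths above are finite --- equivalently, $\mc{M}_0$ is the kernel of the evident map
\[
\mc{M}' \longrightarrow \bigoplus_{i=1}^{m} (x_i)_{*}\Bigl( \mc{M}'_{x_i} \big/ \left( \mc{M}_{x_i} \right)_0 \Bigr)
\]
to a finite direct sum of finite-length skyscraper sheaves. By construction the stalk of $\mc{M}_0$ at $x_i$ is $\left( \mc{M}_{x_i} \right)_0$, while at a closed point $x \notin \{ x_i \}$ its stalk is $\mc{M}'_x = \mc{M}_x$, which is its own minimal root by Proposition~\ref{indexzero}. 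Thus the stalk of $\mc{M}_0$ at every closed point $x$ is the minimal root of $\mc{M}_x$, which is the final assertion of the theorem.

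It then remains to check that $\mc{M}_0$ is a root of $\mc{M}$ and that it is contained in every other root, and both I would verify stalkwise, using that a quasi-coherent subsheaf of $\mc{M}$ whose stalk equals $\mc{M}_x$ at every point $x$ of $X$ --- the closed points together with the generic point $\eta$ --- is all of $\mc{M}$. Property~(1) of Definition~\ref{rootdef} is the coherence just shown. Properties~(2) and~(3), namely that the $\mc{O}_X$-submodule generated by $F^r(\mc{M}_0)$ contains $\mc{M}_0$ and that $\mc{M}_0$ generates $\mc{M}$ over $\mc{O}_{F^r, X}$, hold at each closed stalk because $(\mc{M}_0)_x$ is there a root of $\mc{M}_x$, and hold at $\eta$ because $(\mc{M}_0)_\eta = \mc{M}_\eta$ (as $\eta \in U$). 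For minimality, if $\mc{M}''$ is another root of $\mc{M}$ then $\mc{M}''_x$ is a root of $\mc{M}_x$ and hence contains $\left( \mc{M}_x \right)_0 = (\mc{M}_0)_x$ at every closed $x$, while $\mc{M}''_\eta = \mc{M}_\eta = (\mc{M}_0)_\eta$ by Proposition~\ref{densecoherent}; therefore $\mc{M}_0 \subseteq \mc{M}''$, and uniqueness of the minimal root follows formally. The step I expect to demand the most care is the gluing: verifying that the prescription ``take the local minimal root at each $x_i$, and $\mc{M}$ elsewhere'' genuinely defines a coherent subsheaf satisfying the global generation condition~(3) of Definition~\ref{rootdef}, not merely the two manifestly local conditions. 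This is handled by keeping the auxiliary root $\mc{M}'$ in play: $\mc{M}_0$ agrees with $\mc{M}'$ over the dense open $U$, so no generators are lost outside the finite set $\{ x_1, \ldots, x_m \}$.
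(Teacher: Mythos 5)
Your argument is correct, and it reaches the same object $\mc{M}_0$ as the paper but by a different route of construction. The paper defines $\mc{M}_0$ intrinsically, declaring a section over $V \to X$ to lie in $\mc{M}_0$ iff its germ at every closed point lies in the local minimal root, and then has to do real work to show $(\mc{M}_0)_x$ actually equals the local minimal root --- this requires producing, from a germ $m_x$ in the minimal root of $\mc{M}_x$, a section of $\mc{M}_0$ over a suitably pruned \'etale neighborhood $(V \times_X U)\cup\{x'\}$ that restricts to it. Your version sidesteps that section-level lemma: by fixing an auxiliary root $\mc{M}'$ (Emerton--Kisin) and presenting $\mc{M}_0$ as the kernel of $\mc{M}' \to \bigoplus_i (x_i)_*\bigl(\mc{M}'_{x_i}/(\mc{M}_{x_i})_0\bigr)$, the stalk identification at each $x_i$ and the coherence of $\mc{M}_0$ fall out immediately from the exactness of stalk functors and the fact that the target is a finite-length skyscraper. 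The remaining verifications (root axioms (2) and (3) and minimality) are done stalkwise in both proofs and are identical in substance; you correctly use that a quasi-coherent subsheaf of $\mc{M}$ is determined by its stalks. Both proofs share the same skeleton --- Blickle's local theorem (Theorem~2.10 of \cite{blickleintersection}), Proposition~\ref{densecoherent} to isolate a finite bad locus, Proposition~\ref{indexzero} to dispatch the good locus, stalkwise checking at the end --- so what your alternative buys is a cleaner, more mechanical construction step, at the modest cost of introducing the auxiliary root $\mc{M}'$. One small point to tighten if you write this up: make explicit that $(x_i)_*$ means pushforward along a sufficiently thick infinitesimal neighborhood of $x_i$ (or along $\Spec \mc{O}_{X,x_i} \to X$), since the target module $\mc{M}'_{x_i}/(\mc{M}_{x_i})_0$ carries a nontrivial $\mc{O}_{X,x_i}$-module structure and is not merely a $k$-vector space; this is standard but worth stating so the ``evident map'' is visibly a map of $\mc{O}_X$-modules.
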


\begin{proof}
Our method is to define a subsheaf $\mc{M}_0$ and then prove that
it has the desired properties.
For any \'etale morphism $V \to X$, let $\mc{M}_0 (V) \subseteq \mc{M}(V)$
be the subset consisting of sections $m \in \mc{M} ( V)$ such that
for any closed point $x \in \left| X \right|$ and any diagram
\begin{eqnarray}
\xymatrix{ \Spec k(x) \ar[r] \ar[rd] & V \ar[d] \\
& X,}
\end{eqnarray}
the stalk element at $x$ represented by $m$ is contained in the minimal root
of $\mc{M}_x$.

\begin{lemma}
For any closed point $x \in \left| X \right|$, $\left( \mc{M}_0 \right)_x$
is equal to the minimal root of $\mc{M}_x$.
\end{lemma}

\begin{proof}
By Proposition~\ref{densecoherent}, there is a nonempty open subcurve
$U \subseteq X$ on which $\mc{M}$ is coherent.  Since $\mc{M}$ is
also torsion-free, this makes $\mc{M}_{\mid U}$ a locally free
$\mc{O}_U$-module of finite rank.  By Proposition~\ref{indexzero},
the minimal root of $\mc{M}_y$ at any closed point $y \in \left| U \right|$
is $\mc{M}_y$ itself.  Thus the condition which defines $\mc{M}_0$
above needs only to be checked at points outside of $U$.

Let $x$ be a closed point of $\left| X \right|$.  We show that the stalk
$\left( \mc{M}_0 \right)_x$ contains
the minimal root of $\mc{M}_x$.  Choose any
element $m_x$ from the minimal root of $\mc{M}_x$.  There exists
an \'etale neighborhood
\begin{eqnarray}
\xymatrix{ \Spec k \ar[r] \ar[rd] & V \ar[d] \\
& X,}
\end{eqnarray}
and a section $m \in \mc{M}(V)$ which represents $m_x$.  Let $x'
\in \left| V \right|$ be the image of $\Spec k$ in the diagram above.
Consider the restriction
\begin{eqnarray}
m_{\mid (V \times_X U) \cup \{ x' \}  } \in
\mc{M} ( (V \times_X U ) \cup \{ x' \} ).
\end{eqnarray}
By definition, this section is contained in the subsheaf $\mc{M}_0$.  Therefore
its stalk
$m_x$ is contained in $\left( \mc{M}_0 \right)_x$.

We have shown that the minimal root of $\mc{M}_x$ is contained
in $\left( \mc{M}_0 \right)_x$.  The reverse inclusion is obvious.
\end{proof}

\begin{lemma}
The sheaf $\mc{M}_0$ is coherent.
\end{lemma}

\begin{proof}
As in the previous proof, we may find an open subcurve
$U \subseteq X$ on which $\mc{M}$ is coherent.  For any 
$y \in \left| U \right|$, the minimal root of $\mc{M}_y$ is
$\mc{M}_y$ itself.  The quotient sheaf
$\mc{M} / \mc{M}_0$ is a quasi-coherent skyscraper sheaf
supported outside of $U$.  Since $\mc{M}$ and $\mc{M} / \mc{M}_0$
are both quasi-coherent, $\mc{M}_0$ is quasi-coherent.

Since $\mc{M}_{0 \mid U}$ is a finitely-generated $\mc{O}_U$-module,
and each stalk $\left( \mc{M}_0 \right)_x$ with $x \in \left| X \smallsetminus U \right|$
is a finitely-generated $\mc{O}_{X,x}$-module, $\mc{M}_0$ is
a finitely-generated $\mc{O}_X$-module.  Thus $\mc{M}_0$ is
coherent.
\end{proof}

The other two properties that define a root (see Definition~\ref{rootdef})
follow for $\mc{M}_0$ from the
corresponding properties for the stalks $\left( \mc{M}_0 \right)_x$.
Likewise, the fact that $\mc{M}_0$ is contained in every 
root of $\mc{M}$ follows
easily from the same property for the stalks $\left( \mc{M}_0 \right)_x$.
This completes the proof of the theorem.
\end{proof}

Suppose that $\mc{M}$ is a torsion-free lfgu $\mc{O}_{F^r, X}$-module and
$\mc{M}_0 \subseteq \mc{M}$ is a root for $\mc{M}$.  Then, as in
subsection~\ref{localrootssubsection}, we can define a left
$\mc{O}_{F^r, X}$-module structure on the coherent sheaf dual
$\mc{M}_0^\vee$.  Let $\mc{M}_0 \subseteq \mc{M}_1 \subseteq
\ldots \subseteq \mc{M}$ be the filtration of Proposition~\ref{rootfiltration}.
If $\phi \colon \mc{M}_0 \to \mc{O}_X$ is an $\mc{O}_X$-module
homomorphism, then $F^r ( \phi )$ is the composition
\begin{eqnarray}
\xymatrix{\mc{M}_0 \ar[r] & \mc{M}_1 \ar[d]^\cong & \mc{O}_X \\
& F^{r*}_X \mc{M}_0 \ar[r]^{F^{r*}_X ( \phi )} & F^{r*}_X \mc{O}_X. \ar[u]^\cong }
\end{eqnarray}

Note that if $\phi$ is nonzero, then $F^r ( \phi )$ is also nonzero.  (This
is evident because the map $\mc{M}_0 \to \mc{M}_1$ in the above diagram
is injective.)  So the action of $F^r$ on $\mc{M}_0^\vee$ is injective.

\begin{theorem}
\label{globalexactsequence}
Let $\mc{M}$ be a torsion-free lfgu $\mc{O}_{F^r, X}$-module, and let $\mc{M}_0
\subseteq \mc{M}$ be a root for $\mc{M}$.  Let
\begin{eqnarray}
M = \sheafHom_{\mc{O}_{F^r, X}} \left( \mc{M} , \mc{O}_X \right).
\end{eqnarray}
Then the map $M \to \mc{M}_0^\vee$ given by restriction fits into an
exact sequence
\begin{eqnarray}
\xymatrix{0 \ar[r] & M \ar[r] &
\mc{M}_0^\vee \ar[r]^{1 - F^r} & \mc{M}_0^\vee \ar[r] & 0.}
\end{eqnarray}
\end{theorem}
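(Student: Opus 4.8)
The plan is to reduce the global statement to the local statement already proved, namely Theorem~\ref{localRHalt}, by checking exactness on stalks at closed points. Since a sequence of sheaves on the \'etale site is exact if and only if it is exact on all (geometric) stalks, and for constructible sheaves and coherent sheaves it suffices to check stalks at closed points, the key is to identify the stalk at a closed point $x \in |X|$ of each of the three sheaves appearing in the sequence, compatibly with the maps. The left term $M = \sheafHom_{\mc{O}_{F^r, X}}(\mc{M}, \mc{O}_X)$ has stalk $\Hom_{\mc{O}_{X,x}[F^r]}(\mc{M}_x, \mc{O}_{X,x})$ by Proposition~\ref{stalkiso2}; writing $A$ for $\mc{O}_{X,x}$ (isomorphic to the ring $A$ of Section~\ref{localsection}), this is exactly $\Hom_{A[F^r]}(W, A)$ where $W = \mc{M}_x$. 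The middle and right terms are the coherent sheaf $\mc{M}_0^\vee$, whose stalk at $x$ is $(\mc{M}_{0,x})^\vee = W_0^\vee$ where $W_0 = \mc{M}_{0,x}$; and $W_0$ is a root of $W$ because the stalk of a root is a root (the three root conditions of Definition~\ref{rootdef} are preserved under localization, as is compatibility with the filtration $\mc{M}_\bullet$ of Proposition~\ref{rootfiltration}). Finally, the map $1 - F^r$ on $\mc{M}_0^\vee$ localizes to $1 - F^r$ on $W_0^\vee$ because the Frobenius-linear structure on $\mc{M}_0^\vee$ was defined stalk-locally via the same diagram as in Subsection~\ref{localrootssubsection}.

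Concretely, the steps are: (1) Fix a closed point $x$ and set $A = \mc{O}_{X,x}$, $W = \mc{M}_x$, $W_0 = \mc{M}_{0,x}$; note $W$ is a torsion-free finitely-generated unit $A[F^r]$-module (using that $\mc{M}$ is torsion-free lfgu and localization preserves these properties) and $W_0$ is a root of $W$. (2) Apply Proposition~\ref{stalkiso2} to identify $M_x \cong \Hom_{A[F^r]}(W, A)$, and check this identification is compatible with the restriction map $M \to \mc{M}_0^\vee$ — i.e. that under the stalk isomorphism, restriction to $\mc{M}_0$ corresponds to restriction of an $A[F^r]$-homomorphism to the submodule $W_0$. (3) Identify $(\mc{M}_0^\vee)_x \cong W_0^\vee$ as $A[F^r]$-modules, compatibly with the endomorphism $F^r$; here one invokes that $\mc{M}_0$ is coherent so dualizing commutes with localization, and that the filtration $\mc{M}_n$ and its stalk $(\mc{M}_n)_x = W_n$ agree, so the defining diagram for $F^r$ on $\mc{M}_0^\vee$ localizes to the one for $F^r$ on $W_0^\vee$. (4) Conclude that the stalk at $x$ of the proposed sequence is
\[
\xymatrix{ 0 \ar[r] & \Hom_{A[F^r]}(W, A) \ar[r] & W_0^\vee \ar[r]^{1 - F^r} & W_0^\vee \ar[r] & 0, }
\]
which is exact by Theorem~\ref{localRHalt}. (5) Since exactness holds at every closed stalk and all three sheaves are (quasi-)coherent-type objects on the curve $X$ whose behavior is detected at closed points, the sequence of sheaves is exact.

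The main obstacle is step (3) — verifying that the dual-and-Frobenius construction is genuinely compatible with localization. The subtlety is that $\mc{M}_0^\vee$ carries a Frobenius-linear structure defined by a diagram involving the injection $\mc{M}_0 \hookrightarrow \mc{M}_1$ and the isomorphism $\mc{M}_1 \cong F^{r*}_X \mc{M}_0$; one must confirm that taking stalks at $x$ turns this into precisely the diagram defining $F^r$ on $W_0^\vee$ in Subsection~\ref{localrootssubsection}. This requires knowing that $(\mc{M}_n)_x$ equals the submodule $W_n$ of $W$ generated by $F^{rn}(W_0)$ — which follows because $F^{rn}_X$ is finite flat (smoothness of $X$) so the image-of-a-pullback description of $\mc{M}_n$ in Proposition~\ref{rootfiltration} commutes with the flat base change $\Spec \mc{O}_{X,x} \to X$ — and that $F^{r*}_X \mc{O}_X$ localizes correctly, which is routine since $\mc{O}_X$ is a unit module with structural isomorphism. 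A secondary, minor point is checking in step (2) that the isomorphism of Proposition~\ref{stalkiso2} intertwines the two restriction maps; this is a diagram chase that I would state but not belabor. Once these compatibilities are in hand, the exactness is immediate from the local theorem, and no further computation is needed.
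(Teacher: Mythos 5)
Your proposal is correct and follows essentially the same strategy as the paper's proof: reduce to exactness on stalks at closed points, identify $M_x$ via Proposition~\ref{stalkiso2} and $(\mc{M}_0^\vee)_x \cong (\mc{M}_{0,x})^\vee$, and invoke Theorem~\ref{localRHalt}. You spell out the compatibility checks (that $\mc{M}_{0,x}$ is a root of $\mc{M}_x$, that the Frobenius-linear structure on $\mc{M}_0^\vee$ localizes to the one defined in subsection~\ref{localrootssubsection}, and that the restriction maps match) in more detail than the paper, which simply asserts the stalk identifications and cites Theorem~\ref{localRHalt}; these are genuine but routine verifications, and your treatment of them is sound.
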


\begin{proof}
It suffices to show that the sequence
\begin{eqnarray}
\label{stalkexact}
\xymatrix{0 \ar[r] & M_x \ar[r] &
\left( \mc{M}_0^\vee \right)_x \ar[r]^{1 - F^r} & 
\left( \mc{M}_0^\vee \right)_x \ar[r] & 0.}
\end{eqnarray}
is exact for every closed point $x \in \left| X \right|$.  Note 
that $\left( \mc{M}_0^\vee \right)_x$ is canonically isomorphic
to $\left( \left( \mc{M}_0 \right)_x \right)^\vee$, and, by
Proposition~\ref{stalkiso2}, $M_x$ is canonically isomorphic to
\begin{eqnarray}
\sheafHom_{\mc{O}_{X,x}[F^r]} \left( \mc{M}_x , \mc{O}_{X,x} \right).
\end{eqnarray}
The exactness of (\ref{stalkexact}) follows from Theorem~\ref{localRHalt}.
\end{proof}

\subsection{Cohomology and the Riemann-Hilbert correspondence}

\label{cohomologysubsection}

Let $Y$ be a smooth {\it projective} $k$-curve.  Let $\mc{N}$ be a torsion-free
lfgu $\mc{O}_{F^r, Y}$-module, and let $\mc{N}_0 \subseteq \mc{N}$
be a root for $\mc{N}$.  
The (injective) Frobenius-linear endomorphism of $\mc{N}_0^\vee$ discussed
in the previous subsection
induces Frobenius-linear maps
\begin{eqnarray}
\label{cohomologymap}
H^i \left( Y, \mc{N}_0^\vee \right) \to
H^i \left( Y, \mc{N}_0^\vee \right)
\end{eqnarray}
for $i = 0, 1$.  Each cohomology group $H^i \left( Y , \mc{N}_0^\vee \right)$
is a finite-dimensional $k$-vector space.  The map (\ref{cohomologymap})
is injective for $i = 0$, and thus makes $H^i \left( Y ,
\mc{N}_0^\vee \right)$ a unit $k[F^r]$-module.  The
left $k[F^r]$-module $H^1 \left( Y , \mc{N}_0^\vee \right)$
is not necessarily unit, but it has a natural decomposition
\begin{eqnarray}
H^1 \left( Y, \mc{N}_0^\vee \right) \cong V^{ss} \oplus V^{nil},
\end{eqnarray}
where $V^{ss}$ is a unit $k[F^r]$-module and $V^{nil}$ is a left
$k[F^r]$-module with a nilpotent $F^r$-action.  (See Section~1 of
\cite{katzsga} for a discussion of this type of decomposition.)

By Proposition~\ref{sepclosedfrobenius}, the $F^r$-invariant
elements of $H^0 \left( Y , \mc{N}_0^\vee \right)$ form an
$\mb{F}_{p^r}$-subspace whose dimension is the same as the $k$-dimension
of $H^0 \left( Y, \mc{N}_0^\vee \right)$.  Likewise,
the $F^r$-invariant elements of $H^1 \left( Y, \mc{N}_0^\vee \right)$ form
an $\mb{F}_{p^r}$-vector space whose dimension is 
$\dim_k V_{ss}$.  

The Riemann-Hilbert correspondence implies that these elements are
in one-to-one correspondence with co-cycles for the dual of $\mc{N}$.

\begin{proposition}
\label{cohomologybijection}
Let $Y$ be a smooth projective $k$-curve, and let $N$ be a
constructible $\mb{F}_{p^r}$-\'etale sheaf on $Y$ whose sections
all have open support.  Let
\begin{eqnarray}
\mc{N} = \sheafHom_{\mb{F}_{p^r}} \left( N ,
\mc{O}_Y \right),
\end{eqnarray}
and let $\mc{N}_0$ be a root for $\mc{N}$.  Then the maps
\begin{eqnarray}
H^i \left( Y , N \right) \to H^i \left( Y , 
\mc{N}_0^\vee \right)
\end{eqnarray}
(given by the double-dual homomorphism $N \to \mc{N}_0^\vee$) map
the elements of $H^i \left( Y , N \right)$ bijectively onto
the $F^r$-invariant elements of $H^i \left( Y , \mc{N}_0^\vee \right)$.
\end{proposition}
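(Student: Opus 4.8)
The plan is to deduce the global statement from the local exact sequence of Theorem~\ref{globalexactsequence} by taking cohomology and then identifying $F^r$-invariants with sections of \'etale sheaves. First I would invoke Theorem~\ref{globalexactsequence}: since $\mc{N}_0$ is a root for $\mc{N} = \sheafHom_{\mb{F}_{p^r}}(N, \mc{O}_Y)$ and $N$ is (by Proposition~\ref{globaldoubledual2}) naturally isomorphic to $\sheafHom_{\mc{O}_{F^r, Y}}(\mc{N}, \mc{O}_Y)$, the restriction map gives a short exact sequence of sheaves on $Y$,
\begin{eqnarray}
\xymatrix{0 \ar[r] & N \ar[r] & \mc{N}_0^\vee \ar[r]^{1 - F^r} & \mc{N}_0^\vee \ar[r] & 0.}
\end{eqnarray}
The sheaf $N$ here is an \'etale $\mb{F}_{p^r}$-sheaf, while $\mc{N}_0^\vee$ is a coherent $\mc{O}_Y$-module; the map $1 - F^r$ is additive but only Frobenius-semilinear, so this is an exact sequence of sheaves of abelian groups on the \'etale site. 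Taking the associated long exact sequence of \'etale cohomology produces connecting maps, and the key point is that on each $H^i(Y, \mc{N}_0^\vee)$ the endomorphism induced by $1 - F^r$ is exactly $1 - (\text{the $F^r$-action on } H^i)$ described before the proposition.

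Next I would extract from the long exact sequence, for each $i = 0, 1$, the exact pieces
\begin{eqnarray}
0 \to H^0(Y, N) \to H^0(Y, \mc{N}_0^\vee) \xrightarrow{1 - F^r} H^0(Y, \mc{N}_0^\vee) \to H^1(Y, N) \to \cdots
\end{eqnarray}
together with the analogous continuation into $H^1$. The claim that $H^i(Y, N) \to H^i(Y, \mc{N}_0^\vee)$ has image equal to the $F^r$-invariants is precisely the statement that $H^i(Y, N)$ is the kernel of $1 - F^r$ on $H^i(Y, \mc{N}_0^\vee)$. For $i = 0$ this is immediate from exactness at the first $H^0(Y, \mc{N}_0^\vee)$ term. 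For $i = 1$ I would argue as follows: the map $1 - F^r$ on $H^0(Y, \mc{N}_0^\vee)$ is \emph{surjective} — this uses that $F^r$ acts injectively on $H^0(Y, \mc{N}_0^\vee)$ (noted in the text, since $F^r$ is injective on $\mc{N}_0^\vee$), making $H^0(Y, \mc{N}_0^\vee)$ a finite-dimensional unit $k[F^r]$-module, and on such a module $1 - F^r$ is surjective by Proposition~\ref{sepclosedfrobenius} (the module is spanned by $F^r$-invariants, on which $1 - F^r$ kills the invariant part but one solves $x - F^r(x) = y$ coordinatewise via an \'etale extension of $k$, i.e. within $k$ since $k$ is algebraically closed). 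Surjectivity of $1 - F^r$ on $H^0$ forces the connecting map $H^0(Y, \mc{N}_0^\vee) \to H^1(Y, N)$ to be zero, so $H^1(Y, N)$ injects into $H^1(Y, \mc{N}_0^\vee)$ with image the kernel of $1 - F^r$ there.

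The main obstacle — and the step needing the most care — is checking that the cohomological map $1 - F^r$ really agrees with $1 - (F^r$-action on $H^i)$, and that the kernel of $1 - F^r$ on $H^i(Y, \mc{N}_0^\vee)$ equals the $\mb{F}_{p^r}$-subspace of $F^r$-invariants as described before the proposition; for $H^1$ this means one must see that $1 - F^r$ is injective on the nilpotent part $V^{nil}$ (true since $1 - (\text{nilpotent})$ is invertible) and that its kernel on the unit part $V^{ss}$ has $\mb{F}_{p^r}$-dimension equal to $\dim_k V^{ss}$ (Proposition~\ref{sepclosedfrobenius} again). Combining these, $\ker(1 - F^r \mid H^1(Y, \mc{N}_0^\vee)) = (H^1(Y, \mc{N}_0^\vee))^{F^r}$, which matches the target, completing the identification. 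Throughout I would note that finiteness of $H^i(Y, N)$ and $H^i(Y, \mc{N}_0^\vee)$ (the latter from coherence and projectivity of $Y$) guarantees all these spaces are finite-dimensional, so the semilinear-algebra arguments apply verbatim.
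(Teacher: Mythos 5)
Your proposal is correct and follows essentially the same route as the paper: take the long exact cohomology sequence coming from the short exact sequence of Theorem~\ref{globalexactsequence}, observe that $1-F^r$ is surjective on the trivial unit $k[F^r]$-module $H^0(Y,\mc{N}_0^\vee)$ (by Proposition~\ref{sepclosedfrobenius} and the Artin--Schreier map on the algebraically closed field $k$), and conclude that the long exact sequence splits into two short pieces identifying $H^i(Y,N)$ with $\ker(1-F^r)$ on $H^i(Y,\mc{N}_0^\vee)$. Your closing paragraph on the $V^{ss}\oplus V^{nil}$ decomposition is superfluous here, since $\ker(1-F^r)$ is the set of $F^r$-invariants by definition; that decomposition matters only for the dimension count in the discussion preceding the proposition, not for the bijection itself.
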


\begin{proof}
The module $\mc{N}$ is the dual of $N$ under the Riemann-Hilbert correspondence.
By Theorem~\ref{globalexactsequence}, there is an exact sequence
\begin{eqnarray}
\xymatrix{
0 \ar[r] & N \ar[r] & \mc{N}_0^\vee \ar[r]^{1 - F^r} & \mc{N}_0^\vee \ar[r] & 0 }
\end{eqnarray}
which determines an exact sequence of cohomology groups
\begin{eqnarray}
\xymatrix{
0 \ar[r] & H^0 (Y, N) \ar[r] & H^0 ( Y, \mc{N}_0^\vee ) \ar[r]^{1 - F^r} & 
H^0 ( Y, \mc{N}_0^\vee) \\
\ar[r] & H^1 ( Y, N ) \ar[r] & H^1 ( Y, \mc{N}_0^\vee ) \ar[r]^{1 - F^r} &
H^1 ( Y , \mc{N}_0^\vee ) \ar[r] & 0. }
\end{eqnarray}
The vector space $H^0 \left( Y , \mc{N}_0^\vee \right)$ is a trivial left $k[F^r]$-module
(by Proposition~\ref{sepclosedfrobenius}), and it is easily seen that 
the action of $(1 - F^r)$ on this module is surjective.  So this long exact
sequence breaks up into two short exact sequences.  The result follows.
\end{proof}

Proposition~\ref{cohomologybijection} will now enable us to prove the main
result of this paper.  We establish the following notation: if $y$
is a closed point of $Y$, let $\mathfrak{C} \left( N_{(y)} \right)$ denote
the minimal root index of the sheaf $N_{(y)}$ (see 
Definition~\ref{minimalrootindexdef}).  Let $\chi ( Y, N )$ denote
the Euler characteristic of $N$:
\begin{eqnarray}
\chi ( Y , N ) = \dim_{\mathbb{F}_{p^r}} H^0 ( Y, N ) 
- \dim_{\mathbb{F}_{p^r}} H^1 ( Y , N ).
\end{eqnarray}
Likewise, if $\mc{Q}$ is a coherent sheaf on $Y$, let 
$\chi ( Y, \mc{Q} )$ denote the Euler characteristic of $\mc{Q}$.

\begin{theorem}
\label{mainthm}
Let $Y$ be a smooth projective $k$-curve, and let $N$
be a constructible \'etale $\mathbb{F}_{p^r}$-sheaf on $Y$ whose sections
all have open support.  Let $n$ be the generic rank of $N$.  Then,
\begin{eqnarray}
\chi ( Y, N ) \geq n \cdot \chi ( Y , \mc{O}_Y ) -
\sum_{y \in Y(k)} \mathfrak{C} \left( N_{(y)} \right).
\end{eqnarray}
\end{theorem}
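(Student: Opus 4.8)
The plan is to combine Proposition~\ref{cohomologybijection} with the exact sequence of Theorem~\ref{globalexactsequence} and a Riemann-Roch computation for the coherent sheaf $\mc{N}_0^\vee$, where $\mc{N}_0$ is chosen to be the \emph{minimal} root of $\mc{N} = \sheafHom_{\mb{F}_{p^r}}(N,\mc{O}_Y)$. The minimal root exists by Theorem~\ref{minrootthm}. First I would record that, by Proposition~\ref{cohomologybijection}, $\dim_{\mb{F}_{p^r}} H^0(Y,N) = \dim_k H^0(Y,\mc{N}_0^\vee)$ and $\dim_{\mb{F}_{p^r}} H^1(Y,N) = \dim_k V^{ss} \leq \dim_k H^1(Y,\mc{N}_0^\vee)$. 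Therefore
\begin{eqnarray}
\chi(Y,N) \;\geq\; \dim_k H^0(Y,\mc{N}_0^\vee) - \dim_k H^1(Y,\mc{N}_0^\vee) \;=\; \chi(Y,\mc{N}_0^\vee),
\end{eqnarray}
so it suffices to bound $\chi(Y,\mc{N}_0^\vee)$ from below by the right-hand side of the theorem.

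The next step is to compute $\chi(Y,\mc{N}_0^\vee)$ via Riemann-Roch. Since $\mc{N}_0$ is coherent and torsion-free on the smooth curve $Y$, it is locally free of rank $n$ (the generic rank of $\mc{N}$, hence of $N$), and so is $\mc{N}_0^\vee$. Riemann-Roch for a locally free sheaf $\mc{E}$ of rank $n$ on $Y$ gives $\chi(Y,\mc{E}) = \deg \mc{E} + n\,\chi(Y,\mc{O}_Y)$; applying this to $\mc{E} = \mc{N}_0^\vee$ yields
\begin{eqnarray}
\chi(Y,\mc{N}_0^\vee) \;=\; n\,\chi(Y,\mc{O}_Y) + \deg \mc{N}_0^\vee \;=\; n\,\chi(Y,\mc{O}_Y) - \deg \mc{N}_0.
\end{eqnarray}
Thus the theorem reduces to the inequality $\deg\mc{N}_0 \leq \sum_{y\in Y(k)} \mathfrak{C}(N_{(y)})$. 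Here is where the local invariant enters: I would use the filtration $\mc{N}_0 \subseteq \mc{N}_1 \subseteq \mc{N}$ from Proposition~\ref{rootfiltration}, together with the isomorphism $F^{r*}_Y \mc{N}_0 \cong \mc{N}_1$. Taking degrees, $\deg \mc{N}_1 = \deg F^{r*}_Y\mc{N}_0 = p^r \deg\mc{N}_0$ (since $F^r_Y$ has degree $p^r$ on a curve, it multiplies line-bundle degrees — equivalently determinants — by $p^r$). Hence $\deg(\mc{N}_1/\mc{N}_0) = (p^r-1)\deg\mc{N}_0$, so
\begin{eqnarray}
\deg \mc{N}_0 \;=\; \frac{\dim_k (\mc{N}_1/\mc{N}_0)}{p^r - 1} \;=\; \sum_{y\in Y(k)} \frac{\dim_k (\mc{N}_1/\mc{N}_0)_y}{p^r-1}.
\end{eqnarray}
By Theorem~\ref{minrootthm} the stalk $(\mc{N}_0)_y$ is the minimal root of $\mc{N}_y$, and $(\mc{N}_1)_y$ is the $\mc{O}_{Y,y}$-submodule generated by $F^r((\mc{N}_0)_y)$; by Proposition~\ref{stalkiso1} and Theorem~\ref{localRH}, $\mc{N}_y \cong \Hom_{\mb{F}_{p^r}}(N_{(y)},\mc{O}_{\Spec\mc{O}_{Y,y}})$ is exactly the Riemann-Hilbert dual of $N_{(y)}$. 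So each local term $\dim_k((\mc{N}_1/\mc{N}_0)_y)/(p^r-1)$ is precisely the minimal root index $\mathfrak{C}(N_{(y)})$ of Definition~\ref{minimalrootindexdef}. This gives $\deg\mc{N}_0 = \sum_{y} \mathfrak{C}(N_{(y)})$ — in fact an equality — which combined with the two displayed inequalities above completes the proof.

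The main obstacle I anticipate is the bookkeeping around degrees under Frobenius pullback and the compatibility of the global minimal root with its stalks; the latter is handled by Theorem~\ref{minrootthm}, but one should be careful that only finitely many $y$ contribute (which follows from Proposition~\ref{densecoherent}: $\mc{N}_0 = \mc{N}_1 = \mc{N}$ on a dense open set, so $\mc{N}_1/\mc{N}_0$ is a skyscraper supported at finitely many points), and that the sum over $Y(k)$ is therefore finite and matches the sum in the statement. A subtler point worth spelling out is the claim $\dim_{\mb{F}_{p^r}} H^1(Y,N) \leq \dim_k H^1(Y,\mc{N}_0^\vee)$: this is where the slack in the inequality comes from, since the $F^r$-invariants of $H^1(Y,\mc{N}_0^\vee)$ only see the unit part $V^{ss}$ and miss the nilpotent part $V^{nil}$. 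Everything else is Riemann-Roch and the dictionary already established in the preceding sections.
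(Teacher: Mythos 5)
Your proposal is correct and matches the paper's proof essentially line by line: both use Proposition~\ref{cohomologybijection} (hence Theorem~\ref{globalexactsequence}) to reduce to $\chi(Y,\mc{N}_0^\vee)$, the Frobenius-pullback degree computation $\deg\mc{N}_1 = p^r\deg\mc{N}_0$ to identify $\deg\mc{N}_0 = \sum_y \mathfrak{C}(N_{(y)})$ via the stalkwise definition of the minimal root index, and Riemann-Roch to finish. The only difference is cosmetic (you apply Riemann-Roch before rather than after the degree computation), and your observation that the slack comes entirely from the nilpotent part $V^{nil}$ of $H^1$ is exactly the point the paper is exploiting.
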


\noindent {\it Proof.}
Let
\begin{eqnarray}
\mc{N} = \sheafHom_{\mb{F}_{p^r}} \left( N , \mc{O}_Y \right),
\end{eqnarray}
and let $\mc{N}_0 \subseteq \mc{N}$ be the unique minimal root
for $\mc{N}$.  
By Proposition~\ref{cohomologybijection} and the foregoing
discussion,
\begin{eqnarray}
\dim_{\mathbb{F}_{p^r}} H^0 \left( Y , N \right) &  = &
\dim_k H^0 \left( Y , \mc{N}_0^\vee \right)
\end{eqnarray}
and
\begin{eqnarray}
\dim_{\mathbb{F}_{p^r}} H^1 \left( Y , N \right) &  \leq &
\dim_k H^1 \left( Y , \mc{N}_0^\vee \right). 
\end{eqnarray}
Therefore,
\begin{eqnarray}
\chi ( Y , N ) \geq \chi ( Y , \mc{N}_0^\vee ).
\end{eqnarray}

Let $\mc{N}_1 \subseteq \mc{N}$ be the $\mc{O}_X$-submodule
generated by $F^r ( \mc{N}_0 )$.  The quotient 
$\left( \mc{N}_1 / \mc{N}_0 \right)$ is a coherent 
skyscraper sheaf.
By definition, the $k$-dimension of the stalk of $\left( \mc{N}_1 /
\mc{N}_0 \right)$ at $y$ is $(p^r - 1) \mathfrak{C} \left( \mc{N}_{(y)} \right)$. 
Therefore,
\begin{eqnarray}
\deg \mc{N}_1 - \deg \mc{N}_0 = (p^r - 1) \sum_{y \in Y(k)} \mathfrak{C} \left(
\mc{N}_{(y)} \right).
\end{eqnarray}
Meanwhile, the isomorphism $\mc{N}_1 \cong F^{r*}_X \mc{N}_0$ implies that
$\deg \mc{N}_1 = p^r \deg \mc{N}_0$.  Combining these two equalities yields
\begin{eqnarray}
\deg \mc{N}_0 = \sum_{y \in Y(k)} \mathfrak{C} \left(
\mc{N}_{(y)} \right).
\end{eqnarray}
The desired result now follows using the Riemann-Roch formula:
\begin{eqnarray}
\chi ( Y , N ) & \geq & \chi ( Y , \mc{N}_0^\vee ) \\
& = & n \cdot \chi ( Y , \mc{O}_Y ) + \deg \mc{N}_0^\vee \\
& = & n \cdot \chi ( Y , \mc{O}_Y ) - \deg \mc{N}_0 \\
& = & n \cdot \chi ( Y , \mc{O}_Y ) - \sum_{y \in Y(k)}
\mathfrak{C} \left( N_{(y)} \right).   \qed
\end{eqnarray}

\subsection{Examples}

\label{examplessubsection}

To illustrate Theorem~\ref{mainthm}, we compare three different
examples of rank-$2$ sheaves on the projective line.  In the following,
we will assume $p \geq 5$.  If $Z$ is a $k$-curve, let $\underline{\mathbb{F}_p}_Z$
(or simply $\underline{\mathbb{F}_p}$) denote the constant $\mb{F}_p$-sheaf on
$Z$.

Note that if $f \colon W \to W'$ is a finite morphism of smooth projective
$k$-curves, and $Q$ is a constructible $\mathbb{F}_p$-\'etale sheaf
on $W$, then the dimensions of the cohomology groups $H^i \left( W', f_* Q \right)$
are the same as those of $H^i \left( W, Q \right)$.  (This is 
apparent from a Leray-Serre spectral sequence.)

\begin{example}
Consider the open immersion
$j \colon \mathbb{A}^1_k \smallsetminus \{ 0 \} \to \mathbb{P}^1_k$.  Let
\begin{eqnarray}
M = j_! \left( \underline{\mathbb{F}_p}^{\oplus 2} \right).
\end{eqnarray}
The Euler characteristic of $M$ is $-2$.  (This can be proven easily 
with an exact sequence.)  The sheaf
\begin{eqnarray}
\mc{M} = \sheafHom_{\mathbb{F}_p} \left( M , \mc{O}_{\mathbb{P}^1} \right)
\end{eqnarray}
is isomorphic as a left $\mc{O}_{F^r, \mathbb{P}^1}$-module to
$j_* \mc{O}_{\mathbb{A}^1 \smallsetminus \{ 0 \}}^{\oplus 2}$.  Under this isomorphism,
the minimal root $\mc{M}_0 \subseteq \mc{M}$ can be identified with the set of
sections that have poles of order at most $1$ at both
$0$ and $\infty$.  If $\mc{M}_1 \subseteq \mc{M}$ is the subsheaf
generated by $F^r \left( \mc{M}_0 \right)$, then
\begin{eqnarray}
\dim_k \left( \mc{M}_1 / \mc{M}_0 \right)_0 =
\dim_k \left( \mc{M}_1 / \mc{M}_0 \right)_\infty = 2(p - 1).
\end{eqnarray}
The minimal root index for $M$ at both $0$ and $\infty$ is $2$.
In this case the formula from Theorem~\ref{mainthm} yields
$\chi ( \mathbb{P}^1 , M )$ exactly:
\begin{eqnarray}
2 \cdot \chi ( \mathbb{P}^1 , \mc{O}_{\mathbb{P}^1} ) -
\mathfrak{C} \left( M_{(0)} \right) - \mathfrak{C}
\left( M_{(\infty)} \right) = 2 \cdot 1 - 2 - 2 = -2.
\end{eqnarray}
\end{example}

\begin{example}
\label{deg2example}
Let $f \colon \mathbb{P}^1 \to \mathbb{P}^1$ be a degree-$2$ morphism
which maps $0$ to $0$ and $\infty$ to $\infty$ and is ramified at both
of those points.  Let $N = f_* \underline{\mathbb{F}_p}$.
Then $\chi ( \mathbb{P}^1 , N ) = \chi ( \mathbb{P}^1 , \mathbb{F}_p )
= 1$.

The sheaf $N$ is locally constant away from the ramified points of $f$.
So for any closed point $x \notin \{ 0, \infty \}$, the local sheaf
$N_{(x)}$ is isomorphic to $\left( \underline{\mathbb{F}_p } \right)^{\oplus 2}$.  
The sheaf $N_{(\infty)}$ is a nontrivial rank-$2$ sheaf which can be
trivialized by a quadratic extension of $\mc{O}_{\mathbb{P}^1, \infty}$.
The reader may verify that
there is a simple decomposition
\begin{eqnarray}
N_{(\infty)} \cong \left( \underline{\mathbb{F}_p} \right) \oplus T,
\end{eqnarray}
where $T$ is the nontrivial rank-$1$ sheaf which arose in Example~\ref{quadraticexample}.
By the calculation in that example (and by the fact that the minimal root
index is clearly additive over direct sums), we find
\begin{eqnarray}
\mathfrak{C} \left( N_{(\infty)} \right) = \frac{1}{2}.
\end{eqnarray}
A similar calculation shows that the minimal root index of
$N_{(0)}$ is $\frac{1}{2}$.  So the formula from
Theorem~\ref{mainthm} yields
\begin{eqnarray}
2 \cdot \chi ( \mathbb{P}^1 , \mc{O}_{\mathbb{P}^1} ) -
\mathfrak{C} \left( N_{(0)} \right) - \mathfrak{C}
\left( N_{(\infty)} \right) = 2 \cdot 1 - \frac{1}{2}
- \frac{1}{2} = 1,
\end{eqnarray}
which is equal to $\chi ( \mathbb{P}^1 , N )$.
\end{example}

\begin{example}
\label{ellipticcurveexample}
Let $E$ be an elliptic curve, and suppose that $g \colon 
E \to \mathbb{P}^1$ is a degree-$2$ morphism which is
ramified at $4$ distint points in $\mathbb{P}^1$.  Let
$P = g_* \left( \underline{\mathbb{F}_p} \right)$.  Let
$a_1, a_2, a_3, a_4 \in \mathbb{P}^1$ be the ramified points
of $g$.  A calculation similar to the one in Example~\ref{deg2example}
shows that
\begin{eqnarray}
\mathfrak{C} \left( P_{(a_i)} \right) = \frac{1}{2}.
\end{eqnarray}
Note that $\chi ( \mathbb{P}^1 , P )$ is equal to $\chi ( E , \mathbb{F}_p )$,
which can be $0$ or $1$, depending on whether $E$ is supersingular.
The lower bound for $\chi ( \mathbb{P}^1 , P )$ given by
Theorem~\ref{mainthm} is
\begin{eqnarray}
2 \cdot \chi \left( \mathbb{P}^1 ,
\mathcal{O}_{\mathbb{P}^1} \right) - \sum_{i=1}^4 \mathfrak{C}
\left( P_{(a_i)} \right) = 0.
\end{eqnarray}
So equality occurs in this case if and only if
$E$ is an ordinary elliptic curve.
\end{example}

\end{document}